\def\O{{\mathcal{O}}}
\def\rk{{\mathrm{rank}}}
\theoremstyle{plain}
\newtheorem{thm}{Theorem}[section]
\newtheorem{cor}[thm]{Corollary}
\newtheorem{prop}[thm]{Proposition}
\newtheorem{lem}[thm]{Lemma}
\theoremstyle{definition}
\newtheorem{defn}[thm]{Definition}
\theoremstyle{remark}
\newtheorem{rem}[thm]{Remark}
\begin{document}

\title[Pluricanonical systems on threefolds in positive characteristic]{Frobenius stable pluricanonical systems on threefolds of general type in positive characteristic}

\address{Lei Zhang\\ School of Mathematical Science\\University of Science and Technology of China\\Hefei 230026, P.R.China.}
\email{zhlei18@ustc.edu.cn, zhleimath@163.com}
\author{Lei Zhang}
\maketitle

\begin{abstract}
This paper aims to investigate effectivity problems of pluricanonical systems on varieties of general type in positive characteristic. In practice, we will consider a sub-linear system $|S^0_{-}(X, K_X + nK_X)| \subseteq |H^0(X, K_X +nK_X)|$ generated by certain Frobenius stable sections, and prove that for a minimal terminal threefold $X$ of general type with either $q(X)>0$ or Gorenstein singularities, if $n\geq 28$ then $|S^0_{-}(X, K_X + nK_X)| \neq \emptyset$; if $n\geq 42$ then  the linear system $|S^0_{-}(X, K_X + nK_X)|$ defines a birational map.

\emph{Keywords}: pluricanonical map; positive characteristic; effectivity; birationality; minimal model.\\ \emph{MSC}: 14E05; 14E30.
\end{abstract}

\section{Introduction}
\textbf{Conventions:} (1) A \emph{fibration} means a projective morphism $f:X \to Y$ of varieties such that $f_*\mathcal{O}_X = \mathcal{O}_Y$. Throughout this paper, as $Y$ frequently appears as the base of a fibration, we use $\eta$ and $\bar{\eta}$ to denote the generic point and geometric generic point of $Y$ respectively, and naturally use $X_{\eta}$ and $X_{\bar{\eta}}$ to denote the  generic fiber and geometric generic fiber of $f$. For a scheme $Z$ we use $Z^{\mathrm{red}}$ to denote the scheme with the reduced structure of $Z$.
\smallskip

(2) For a morphism $\sigma: Z \to X$, if $D$ is a divisor on $X$, especially when $Z$ is birational to a subvariety of $X$, we often use $D|_Z$ to denote the pullback $\sigma^*D$ for simplicity.
\smallskip

(3) Let $D$ be a Weil divisor on a normal variety $X$ (or a Cartier divisor on an integral variety). We regard $\mathcal{O}_X(D)$ as a subsheaf of the constant sheaf of function field $K(X)$ of $X$ via
$$\mathcal{O}_X(D)_x:=\{f \in K(X)| (\mathrm{div}(f) + D)|_U \geq 0 ~\mathrm{for~some~open~set}~U~\mathrm{containing}~x\}.$$
Let $V \subseteq H^0(X, \mathcal{O}_X(D))$ be a finite dimensional linear subspace. If the linear system $|V|$ defines a birational map, we will simply say that $V$ or $|V|$ is birational.

\smallskip

(4) For two integral divisors $D_1,D_2$ on a normal variety $X$ such that $D_1 \leq D_2$, let $E= D_2-D_1$. We will use $s_E$ to denote a nonzero section of $\mathcal{O}_X(E)$ with zero $E$ (unique up to multiplying with a nonzero constant if $X$ is a projective variety). Then we have a natural inclusion $H^0(X, \mathcal{O}_X(D_1)) \otimes s_E \subseteq H^0(X, \mathcal{O}_X(D_2))$. When $X$ is a projective variety, the natural inclusion map $H^0(X, \mathcal{O}_X(D_1)) \hookrightarrow H^0(X, \mathcal{O}_X(D_2))$ means the map induced by tensoring with $s_E$.
\bigskip

Pluricanonical system $|nK_X|$ plays an important role in the classification of varieties.
For the class of varieties with non-negative Kodaira dimension, it is significant to get a lower bound of  $n$ such that
\begin{itemize}
\item
the linear system $|nK_X|\neq \emptyset$ (effective nonvanishing problem) and
\item
the $n$-canonical map defined by $|nK_X|$ is birationally equivalent to the Iitaka fibration (effective Iitaka fibration problem).
\end{itemize}
Here we are only concerned with varieties of general type. For a smooth projective surface $X$ of general type over an algebraically closed field of arbitrary characteristic, it is known that $|5K_X|$ is birational (\cite{SB91,Re88}). In general, over the field of complex numbers $\mathbb{C}$, there exists a number $M(d)$ such that, for any $d$-dimensional smooth projective varieties of general type, if $m \geq M(d)$ then $|mK_X|$ is birational (\cite{HM06, Tak06}), and for threefolds we may take $M(3) = 126$ (\cite{CC10a,CC10b}).

This paper aims to investigate pluricanonical systems of varieties of general type in positive characteristic. As is known to experts, Kawamata-Viehweg vanishing is a key technical tool in the study of adjoint linear system $|K_X + L|$ in characteristic zero, which enables one to extend a section on the log canonical center to the whole variety. Unfortunately, in positive characteristic, Kawamata-Viehweg vanishing fails for some varieties (\cite{Ra78}). To substitute for the role of this vanishing in positive characteristic, the idea is to combine Fujita vanishing (\cite{Fuj83, Ke03}) with Mumford regularity, then one can apply Frobenius amplitude to produce certain global sections of $K_X + L$ (\cite{Ke08, Sch14}). These sections are called \emph{Frobenius stable sections}, and the corresponding sub-linear system is called a \emph{Frobenius stable adjoint linear system} (Sec. \ref{sec:Frobenius stable sections}). Following this idea, we shall adapt some classical inductive approaches from characteristic zero to positive characteristic case. We can apply them to treat varieties endowed with certain fibrations, which arise either from pluricanonical maps or from Albanese morphisms. With the help of Riemann-Roch formula, we are able to prove effective nonvanishing and birationality of Frobenius stable pluricanonical systems on lower dimensional varieties.

\subsection{Effectivity for Linear systems on varieties equipped with certain fibrations}
We briefly recall a classical inductive strategy from characteristic zero as follows. For a smooth projective variety $X$ over an algebraically closed field of characteristic zero, if given a natural number $n_1$ such that $\dim |n_1K_X| \geq 1$, which induces a rational map $f:X \dashrightarrow Y$ with generic fiber $F$, and given a number $n_2$ such that $|n_2K_F|$ defines birational map of $F$, then one can get a suitably bigger number $M(n_1, n_2)$ such that for $m\geq M(n_1, n_2)$ the linear system $|mK_X|$ is  birational (\cite{Ch04}). The most important step to carry out this strategy is to extend sections on a fiber to the whole variety, hence one needs vanishing results and weak positivity of the pushforward of (relative) pluricanonical sheaves (\cite{Ko86}). This approach still works if we consider Frobenius stable adjoint linear systems. We will use the following criterion, which can be seen as a generalization of Keeler's result \cite{Ke08} to higher relative dimensional cases.
\begin{thm}[=Theorem \ref{thm:bir-criterion-for-induction}]\label{thm:intr-bir-crt1}
Let $f:X \to Y$ be a fibration of normal projective varieties over an algebraically closed field $k$ of characteristic $p$, and let $d = \dim Y$.
Let $D$ be a nef and big $\mathbb{Q}$-Cartier $\mathbb{Q}$-divisor on $X$, and $H, \tilde{H}$ two $\mathbb{Q}$-Cartier Weil divisors on $Y$ such that $|H|$ defines a generically finite map and $|\tilde{H}|$ is birational.

(i) If $S_{-}^0(X_{\eta}, K_{X_{\eta}} + D|_{X_{\eta}}) \neq 0$ then $S_{-}^0(X, K_X +D + f^*sH)\neq 0$ for any $s \geq d$.

(ii) If $S_{-}^0(X_{\eta}, K_{X_{\eta}} + D|_{X_{\eta}})$ is birational then $S_{-}^0(X, K_X +D + f^*sH)$ is birational for $s \geq d+1$; and if moreover $S_{-}^0(X, K_{X} +D + df^*H - f^*\tilde{H}) \neq 0$ then $S_{-}^0(X, K_X +D + f^*dH)$ is birational.
\end{thm}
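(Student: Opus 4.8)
The plan is to run an induction over the base $Y$ in the style of Keeler's argument, whose engine consists of two ingredients: a restriction property for Frobenius stable sections along $f$, and the bookkeeping of $S^0_-$ under effective twists by pullbacks of general members of $|H|$ and $|\tilde H|$. First I would isolate the former as a lemma: for a fibration $f:X\to Y$ and a nef and big $\mathbb{Q}$-Cartier $\mathbb{Q}$-divisor $N$ on $X$, and for a general closed point $y\in Y$ with fibre $F=X_y$, the restriction map $S^0_-(X, K_X + N + f^*dH)\to S^0_-(F,(K_X+N)|_F)$ should be surjective, where $(K_X+N)|_F$ is identified with $K_F+N|_F$, and $S^0_-(F,K_F+N|_F)\neq 0$ (resp.\ is birational) as soon as $S^0_-(X_\eta,K_{X_\eta}+N|_{X_\eta})\neq 0$ (resp.\ is birational). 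The count ``$d$ copies of $H$'' would come from spending $d$ general members $H_1,\dots,H_d\in|H|$: since $|H|$ is generically finite, $H_1\cap\dots\cap H_d$ is a finite reduced set that one may arrange to contain the general point $y$, and one restricts $S^0_-$ one step at a time along the $f^*H_i$, checking that the obstruction to lifting Frobenius stable sections vanishes at each stage.

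Granting this, part (i) is immediate: apply the lemma with $N=D$; from $S^0_-(X_\eta,K_{X_\eta}+D|_{X_\eta})\neq 0$ I get $S^0_-(F,K_F+D|_F)\neq 0$ for general $F$, hence $S^0_-(X,K_X+D+f^*dH)\neq 0$ by surjectivity, and for $s\geq d$ the inclusion $S^0_-(X,K_X+D+f^*dH)\otimes s_{f^*((s-d)H)}\subseteq S^0_-(X,K_X+D+f^*sH)$ finishes it. For the first assertion of part (ii) I would show that a general pair of points $x_1,x_2\in X$ is separated. Writing $y_i=f(x_i)$: in the generic situation $y_1\neq y_2$ (the locus $\{f(x_1)=f(x_2)\}$ being a proper closed subset of $X\times X$), I pick $\Sigma\in|H|$ with $y_1\in\Sigma\not\ni y_2$ (possible for a general pair, as $|H|$ is generically finite), take $t\in S^0_-(X,K_X+D+f^*((s-1)H))$ nonvanishing at the general point $x_2$ --- nonzero by part (i) since $s-1\geq d$, and $x_2$ not a base point because birationality on $X_\eta$ forces $S^0_-(F,K_F+D|_F)$ to be base-point free at general points of a general $F$, which then lifts --- and note that $t\cdot s_{f^*\Sigma}\in S^0_-(X,K_X+D+f^*sH)$ vanishes at $x_1$ but not at $x_2$. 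In the remaining case $y_1=y_2=:y$ with $x_1\neq x_2$ in a general fibre $F$, birationality of $S^0_-(F,K_F+D|_F)$ (hypothesis plus the lemma) gives a separating section on $F$, which lifts through $S^0_-(X,K_X+D+f^*dH)\twoheadrightarrow S^0_-(F,K_F+D|_F)$; hence $s\geq d+1$ suffices, the extra copy being what separates $y_1$ from $y_2$.

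For the ``moreover'' clause I would save that extra copy of $H$ by routing the base-point separation through $\tilde H$: for general $x_1,x_2$ with $y_1\neq y_2$, choose $\tilde\Sigma\in|\tilde H|$ with $y_1\in\tilde\Sigma\not\ni y_2$ (legitimate since $|\tilde H|$ is birational) and $t\in S^0_-(X,K_X+D+df^*H-f^*\tilde H)$ nonvanishing at $x_2$ (nonzero by hypothesis, with $x_2\notin\mathrm{Bs}$ checked as before); then $t\cdot s_{f^*\tilde\Sigma}\in S^0_-(X,K_X+D+df^*H)$ separates $x_1$ and $x_2$, while the same-fibre case is handled exactly as above using only the $df^*H$-twist. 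This gives birationality of $S^0_-(X,K_X+D+f^*dH)$ with no $(d+1)$-st copy of $H$ needed.

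The hard part will be the restriction lemma in positive characteristic. In characteristic $p$ a general (or the generic) fibre of $f$ may be non-reduced, non-normal, or fail generic smoothness, so ordinary adjunction and Kawamata--Viehweg-type extension are unavailable; the point of working with $S^0_-$ rather than $S^0$ is precisely its stability under Frobenius base change, which should make it descend and ascend correctly among $X$, the general closed fibre $F$, the generic fibre $X_\eta$, and the geometric generic fibre $X_{\bar\eta}$. Establishing the surjectivity of restriction --- and its compatibility with cutting by the $d$ general members $f^*H_i$ and with the identification $(K_X+D)|_F\leftrightarrow K_F+D|_F$ --- is where the real work lies; the rest is the standard ``separate two general points'' argument together with monotonicity of $S^0_-$ under effective twists.
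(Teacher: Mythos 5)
Your skeleton — spend $d$ twists by $f^*H$ to lift sections from the fibre, then one more twist (or a member of $|\tilde H|$) to separate points lying over distinct points of $Y$ — is the same as the paper's, and your reductions of (i), (ii) and the ``moreover'' clause to the restriction statement are sound in outline. But the entire content of the theorem is packed into your unproven ``restriction lemma,'' and you say so yourself; as written the proposal has a genuine gap precisely there. The paper's mechanism is the following. One first reduces $S^0_-$ to a single $S^0_\Delta$ for a well-chosen effective $\Delta$ with $D-\Delta$ \emph{ample} (Proposition \ref{prop:F-stable-section}); this is essential, because the engine is Fujita vanishing applied to $F^e_*\mathcal{O}_X(K_X+\ulcorner p^e(D-\Delta)\urcorner)$. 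One then pushes this sheaf forward to $Y'$, the image of the generically finite map defined by $|H|$, twists by $sH'$ with $H'$ ample and base point free, and shows via Fujita vanishing that $H^i(Y',\mathcal{F}^e_s(-jH'))=0$ for $j\le d$, $i>0$ — i.e.\ Castelnuovo--Mumford $0$-regularity with respect to $H'$ — whence the image sheaf $\mathcal{V}^e_s$ of the trace map is globally generated by Frobenius stable sections for $s\ge d$; a Koszul complex built from $d$ general members of $|H'|$ through a point $y'$ gives the same for $\mathcal{I}_{y'}\cdot\mathcal{V}^e_s$ when $s\ge d+1$. Your ``cut by $H_1,\dots,H_d$ one step at a time, checking the obstruction vanishes'' is morally this Koszul/regularity argument, but without identifying the vanishing that kills the obstructions (Fujita vanishing for the Frobenius pushforwards, which is why ampleness of $D-\Delta$ and the $S^0_-\leadsto S^0_\Delta$ reduction are needed) the step is not a proof.

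A second, independent problem is that you route everything through \emph{general closed} fibres $F$, whereas the hypothesis is about the \emph{generic} fibre $X_\eta$. You would then need to pass from $S^0_-(X_\eta,\cdot)\neq 0$ (resp.\ birational) to the same statement for $S^0_-(F,\cdot)$ for general closed $F$; this generic-to-general direction for $S^0_-$ is a nontrivial semicontinuity issue (the paper's Corollary \ref{cor:bir-general-fiber} only treats the opposite direction, and already needs an uncountable base field), and you would also need to justify the identification $(K_X+D)|_F=K_F+D|_F$ when $F$ may be non-reduced or non-normal. The paper sidesteps both difficulties entirely: it never restricts to closed fibres, but instead uses $\mathcal{V}^e_s\otimes k(\eta)\cong S^0_{\Delta|_{X_\eta}}(X_\eta,K_{X_\eta}+D|_{X_\eta})$ together with the birationality criterion (Theorem \ref{thm:bir}), which is phrased at the generic fiber and accepts either global generation of $\mathcal{I}_{y'}\cdot\mathcal{V}^e_s$ (for $s\ge d+1$) or the auxiliary birational system $|\tilde H|$ on $Y$ plus an effective divisor (for $s=d$). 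I would recommend reorganizing your argument around the generic fibre and the pushforward sheaves on $Y'$, and supplying the Fujita-vanishing/Mumford-regularity computation that your lemma currently only names.
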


About the notions and the assumptions in the theorem above, we make the following remarks. First, we do not require $D$ to be integral to keep certain flexibility in the application. Second, the notion $S_{-}^0(X, K_X + D)$ is a subspace of the space of Frobenius stable sections $S^0(X, K_X + \ulcorner D\urcorner)$ (Section \ref{sec:Frobenius stable sections}). The advantage of this subspace lies in that each section of this type on the generic fiber can be lifted to a global one.
\medskip

Inspired by the idea of continuous global generation (CGG) introduced by Pareschi and Popa \cite{PP03}, we can prove the following theorem, which is used to treat the case of irregular varieties.
\begin{thm}[=Theorem \ref{thm:bir-criterion-irr}]\label{thm:intr-bir-crt2}
Let $X$ be a smooth projective variety  over an algebraically closed field $k$ of characteristic $p$, and let $a: X \to A$ be a morphism to an abelian variety. Denote by $f: X \to Y$ the fibration arising from the Stein factorization of $a: X \to A$. Let $D, D_1,D_2$ be three divisors on $X$. Assume that $D$ is nef, big and $\mathbb{Q}$-Cartier.

(i) If $S^0_{-}(X_{\eta}, K_{X_{\eta}} + D_{\eta}) \neq 0$, then for any $\mathcal{P}_{\alpha} \in \mathrm{Pic}^0(A)$, $H^0(X, K_X + \ulcorner D\urcorner + a^*\mathcal{P}_{\alpha})) \neq 0$, and there exists some $\mathcal{P}_{\beta} \in \mathrm{Pic}^0(A)$ such that $S_{-}^0(X, K_X + \ulcorner D\urcorner + a^*\mathcal{P}_{\beta})\neq 0$.

(ii) Assume that $S^0_{-}(X_{\eta}, K_{X_{\eta}} + D_{\eta}) \neq 0$, $D_1$ is integral and for any $\mathcal{P}_{\alpha} \in \mathrm{Pic}^0(A)$, $|D_1 + a^*\mathcal{P}_{\alpha}| \neq \emptyset$. Then for any $\mathcal{P}_{\alpha_0} \in \mathrm{Pic}^0(A)$, $S^0_{-}(X, K_X + D + D_1 + a^*\mathcal{P}_{\alpha_0}) \neq 0$.

(iii) Assume that $S^0_{-}(X_{\eta}, K_{X_{\eta}} + D_{\eta})$ is birational, both $D_1$ and $D_2$ are integral and for any $\mathcal{P}_{\alpha} \in \mathrm{Pic}^0(A)$, $|D_i + a^*\mathcal{P}_{\alpha}| \neq \emptyset$. Then for any $\mathcal{P}_{\alpha_0} \in \mathrm{Pic}^0(A)$, $S^0_{-}(X, K_X + D + D_1+D_2 + a^*\mathcal{P}_{\alpha_0})$ is birational.

(iv) Assume that $S^0_{-}(X_{\eta}, K_{X_{\eta}} + D_{\eta})$ is birational, and $D_1,D_2$ are nef and big $\mathbb{Q}$-Cartier $\mathbb{Q}$-divisors such that $S^0_{-}(X_{\eta}, K_{X_{\eta}} + (D_i)_{\eta}) \neq 0$. Then for any $\mathcal{P}_{\alpha_0} \in \mathrm{Pic}^0(A)$,
$S^0_{-}(X, K_X + D + (K_{X} + \ulcorner D_1\urcorner) + (K_{X} +  \ulcorner D_2\urcorner )+ a^*\mathcal{P}_{\alpha_0}))$ is birational.
\end{thm}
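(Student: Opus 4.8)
The plan is to transplant the continuous global generation technique of Pareschi--Popa \cite{PP03} to positive characteristic, using Theorem \ref{thm:bir-criterion-for-induction} for the relative input and the Frobenius-stable generic vanishing formalism in place of Kawamata-Viehweg vanishing. Factor $a$ through its Stein factorization $X \xrightarrow{f} Y \xrightarrow{b} A$, so $f$ is the given fibration and $b$ is finite; fix a very ample divisor $H_A$ on $A$ and put $H := b^*H_A$, so that $|H|$ defines a generically finite map and $f^*H \sim a^*H_A$. Since $S^0_{-}(X_{\eta}, K_{X_{\eta}} + D_{\eta}) \neq 0$, Theorem \ref{thm:bir-criterion-for-induction}(i) gives $S^0_{-}(X, K_X + \ulcorner D\urcorner + a^*(sH_A)) \neq 0$ for every $s \geq d = \dim Y$, and, when $S^0_{-}(X_{\eta}, K_{X_{\eta}} + D_{\eta})$ is birational, the first half of Theorem \ref{thm:bir-criterion-for-induction}(ii) gives birationality of $S^0_{-}(X, K_X + \ulcorner D\urcorner + a^*(sH_A))$ for $s \geq d+1$. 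The entire content of (i)--(iv) is then to trade the ample class $sH_A$ either for an arbitrary element of $\mathrm{Pic}^0(A)$ (for nonvanishing) or for the prescribed effective classes $D_1, D_2$ (for nonvanishing and birationality) --- exactly what continuous global generation is built to do.

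The core construction is a coherent sheaf $\F$ on $A$ --- morally the stabilized image on $A$ of the Frobenius trace maps that compute $S^0$ --- together with the relative version $a_*\mathcal{O}_X(K_X + \ulcorner D\urcorner)$, so that $H^0(A, \F \otimes \mathcal{P}_\alpha)$ recovers (a natural nonzero subspace of) $S^0_{-}(X, K_X + \ulcorner D\urcorner + a^*\mathcal{P}_\alpha)$ for $\mathcal{P}_\alpha \in \mathrm{Pic}^0(A)$. Using the bigness of $D$ --- which is precisely what makes the generic-fibre hypothesis $S^0_{-}(X_{\eta}, K_{X_{\eta}} + D_{\eta}) \neq 0$ propagate --- together with the positive-characteristic Fourier--Mukai / generic vanishing machinery for $S^0$, I would show that $\F$ is nonzero and M-regular, hence continuously globally generated, and that $a_*\mathcal{O}_X(K_X + \ulcorner D\urcorner)$ is a GV-sheaf with $V^0 = \mathrm{Pic}^0(A)$. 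This is the step I expect to be the principal obstacle: one must verify that forming $S^0$ (passage to large Frobenius powers, stabilization of trace images) is compatible with the $\mathrm{Pic}^0(A)$-twists and with the Fourier--Mukai transform on $A$ (the Frobenius pullback multiplies $\mathrm{Pic}^0$ by $p$, so this compatibility is delicate), and that bigness --- not merely effectivity on the generic fibre --- suffices to push the cohomology support loci $V^i(\F)$ into the codimension range defining M-regularity.

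Granting this, part (i) follows: the GV property of $a_*\mathcal{O}_X(K_X + \ulcorner D\urcorner)$ with $V^0 = \mathrm{Pic}^0(A)$ gives $H^0(X, K_X + \ulcorner D\urcorner + a^*\mathcal{P}_\alpha) \neq 0$ for all $\alpha$, and for general $\mathcal{P}_\beta$ the relevant trace maps become surjective (Fourier--Mukai plus semicontinuity on $\mathrm{Pic}^0(A)$), so $S^0_{-}(X, K_X + \ulcorner D\urcorner + a^*\mathcal{P}_\beta)$ is then nonzero. For part (ii), fix $\mathcal{P}_{\alpha_0}$; since $|D_1 + a^*\mathcal{P}_\alpha| \neq \emptyset$ for every $\alpha$, choose $\beta$ general, take a section of $S^0_{-}(X, K_X + \ulcorner D\urcorner + a^*\mathcal{P}_\beta)$ from part (i) and a section of $|D_1 + a^*(\mathcal{P}_{\alpha_0}\otimes\mathcal{P}_\beta^{-1})|$, and multiply them (convention (4)) to get a section of $\mathcal{O}_X(K_X + D + D_1 + a^*\mathcal{P}_{\alpha_0})$; the Pareschi--Popa lemma that an M-regular sheaf twisted by one continuously generated system is globally generated off a controlled locus lets one pick $\beta$ so that this product survives, and one then checks it lies in $S^0_{-}$. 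The only non-formal ingredient is that $S^0_{-}$ is preserved under multiplication by such an effective divisor: this divisor is pulled back from $A$, hence disjoint from a general fibre of $f$, so the generic-fibre stability underlying the subscript "$-$" is unaffected.

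Part (iii) is the same argument with one more layer of "separation": birationality requires separating two general points and a tangent direction, and the two hypotheses $|D_i + a^*\mathcal{P}_\alpha| \neq \emptyset$ ($i=1,2$, all $\alpha$) play the role of the two ample translates in Pareschi--Popa's point-separation criterion. Starting from a birational $S^0_{-}(X, K_X + \ulcorner D\urcorner + a^*(sH_A))$ with $s \geq d+1$, absorb $sH_A$ into $\mathrm{Pic}^0(A)$ using M-regularity of $\F$, then for general $\beta_1,\beta_2$ choose sections of $|D_1 + a^*\mathcal{P}_{\beta_1}|$ and $|D_2 + a^*\mathcal{P}_{\beta_2}|$ vanishing at one chosen point but not the other, multiply by a suitably twisted birational $S^0_{-}$-section, and verify membership in $S^0_{-}(X, K_X + D + D_1 + D_2 + a^*\mathcal{P}_{\alpha_0})$ as in (ii). Finally, part (iv) is just part (iii) applied with $D_i$ replaced by the integral divisor $K_X + \ulcorner D_i\urcorner$: since each $D_i$ is nef and big with $S^0_{-}(X_{\eta}, K_{X_{\eta}} + (D_i)_{\eta}) \neq 0$, part (i) applied to $D_i$ yields $|K_X + \ulcorner D_i\urcorner + a^*\mathcal{P}_\alpha| \neq \emptyset$ for all $\alpha$, which is exactly the hypothesis required to run (iii) with $D_1 := K_X + \ulcorner D_1\urcorner$ and $D_2 := K_X + \ulcorner D_2\urcorner$.
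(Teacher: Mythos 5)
Your overall strategy --- realize the Frobenius-stable sections as global sections of a sheaf on $A$ and run the Pareschi--Popa continuous-global-generation machinery --- is the paper's strategy, and your treatments of (ii) and (iv) (multiply a section from (i) by a section of $|D_1 + a^*\mathcal{P}_{\alpha_0-\beta}|$; reduce (iv) to (iii) via (i)) match the paper. But two of your steps are genuinely problematic. First, the entry point via Theorem \ref{thm:bir-criterion-for-induction} and the plan to ``trade'' or ``absorb $sH_A$ into $\mathrm{Pic}^0(A)$ using M-regularity'' is not a valid mechanism: CGG lets you replace a \emph{generic} $\mathrm{Pic}^0$-twist by effective divisors, but it cannot remove an ample class. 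The paper never introduces $sH_A$; the positivity comes directly from Fujita vanishing applied to $K_X + \ulcorner p^e(D-\Delta)\urcorner$ with $D-\Delta$ ample, which makes $\pi_*f_*F^e_*\mathcal{O}_X(K_X + \ulcorner p^e(D-\Delta)\urcorner)$ an $IT^0$ sheaf for $e\gg 0$, uniformly in the $\mathrm{Pic}^0$-twist. Second, the step you flag as ``the principal obstacle'' is left unresolved, and it is resolved in the paper by two concrete observations: $F^{e*}a^*\mathcal{P}_\alpha = a^*\mathcal{P}_{p^e\alpha}$ keeps all twists inside $\mathrm{Pic}^0(A)$ so the trace maps computing $S^e_\Delta$ twist coherently, and Proposition \ref{prop:F-stable-section} (iii-1) supplies a single $e$ with $S^0_\Delta = S^e_\Delta$ simultaneously for \emph{all} nef twists $a^*\mathcal{P}_\alpha$, which is what lets one read off $S^0_\Delta(X, K_X+D+a^*\mathcal{P}_\alpha)$ from $H^0(A,\pi_*\mathcal{F}^e\otimes\mathcal{P}_\alpha)$.

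For (iii) your sketch also misses the actual separation mechanism. The paper does not prove M-regularity of anything; it shows that the subsheaf $\pi_*({}_x\mathcal{F}^e)$ (sections ``vanishing at $x$'') is merely a GV-sheaf, and GV only yields generation by \emph{finitely many, uncontrolled} twists $\mathcal{P}_{\alpha_1},\dots,\mathcal{P}_{\alpha_m}$ (Corollary \ref{cor:generation}(i)), not by general ones. This is precisely why \emph{two} auxiliary systems $|D_1+a^*\mathcal{P}_\alpha|$ and $|D_2+a^*\mathcal{P}_\alpha|$ are needed: one writes $\mathcal{P}_{\alpha_0} = \mathcal{P}_{\alpha_j}\otimes\mathcal{P}_{\alpha_0-\alpha_j-\beta_j}\otimes\mathcal{P}_{\beta_j}$ with $\beta_j$ general, uses $|D_2+a^*\mathcal{P}_{\beta_j}|$ (general twist, so its base locus avoids the second point $z$) and $|D_1+a^*\mathcal{P}_{\alpha_0-\alpha_j-\beta_j}|$ (twist forced by $\alpha_j$, but still avoiding $z$ because the good locus in $\hat{A}$ is open dense) to correct the uncontrolled $\alpha_j$. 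The sections of $|D_i+\cdots|$ need only be nonvanishing at $z$; the vanishing at $x$ is carried entirely by ${}_xS^0_\Delta$. Your version, with both $|D_i|$-sections chosen for general $\beta_i$ and required to vanish at one of the points, does not follow from the stated hypotheses and obscures why a single auxiliary divisor would not suffice.
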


\subsection{Effectivity of pluricanonical maps of lower dimensional varieties} We explain our strategy as follows. Without loss of generality we consider a minimal threefold $X$ of general type and separate two cases $q(X) =0$ and $q(X) >0$.

(1) For the first case $q(X) = 0$, we shall apply the first criterion (Theorem 1.1). We are left to find a number $n_0$ such that $\dim|n_0K_X| \geq 1$. If $p_g(X) >1$ we may take $n_0 =1$. If $p_g(X)=0$ then $\chi(\mathcal{O}_X) \geq 0$. For this case we first apply Riemann-Roch formula to find a number $n_0$ such that $\chi(X, n_0K_X) \geq 2$, here we need to prove a Miyaoka-Yau type inequality (Theorem \ref{thm:my-ineq}). Then we intend to show $h^2(X, n_0K_X) = 0$ by using some results of bend-and-break, but we have to require that $X$ has Gorenstein singularities for an unhappy technical reason (Lemma \ref{lem:van-h2}).

(2) For the second case $q(X) >0$, as $X$ has nontrivial Albanese map, we can apply Theorem 1.2 and argue according to the relative Albanese dimension.
\smallskip

It is worth mentioning that, to do induction, we need to verify the effectivity conditions of Frobenius stable pluricanonical systems $|S^0_{-}(X_{\eta}, K_{X_{\eta}} + nK_{X_{\eta}})|$ on the generic fiber, but in practice, thanks to Theorem \ref{thm:bir-geo-gen}, we may pass to a normal model $Z_{\bar{\eta}}$ of $X^{\mathrm{red}}_{\bar{\eta}}$ and only need to verify the corresponding conditions for $|S^0_{-}(Z_{\bar{\eta}}, K_{Z_{\bar{\eta}}} + nK_{X_{\eta}}|_{Z_{\bar{\eta}}})|$. It is convenient to work with $Z_{\bar{\eta}}$ since it is defined over an algebraically closed field.
In Theorem \ref{thm:eff-curve} and \ref{thm:eff-surface}, we first obtain effectivity results for Frobenius stable adjoint linear systems on curves and surfaces. In particular, we prove that for smooth projective curves of general type, $S^0_{-}(X, K_X + nK_X) \neq 0$ for $n\geq 1$, and $S^0_{-}(X, K_X + nK_X)$ is very ample if $n\geq 2$; and for surfaces of general type, the two lower bounds are $4$ and $7$ respectively. With these preparations, we finally prove the following theorem for threefolds.



\begin{thm}\label{thm:eff-3folds}
Let $X$ be a minimal terminal threefold of general type over an algebraically closed field of characteristic $p$.

(1) Assume $q(X) >0$. Then $S^0_{-}(X, K_X + nK_X) \neq 0$ if $n\geq 11$, and $S^0_{-}(X, K_X + nK_X)$ is birational if $n\geq 21$; and if moreover $p>2$, then $S^0_{-}(X, K_X + nK_X) \neq 0$ if $n\geq 9$, and $S^0_{-}(X, K_X + nK_X)$ is birational if $n\geq 17$.

(2) Assume $q(X) =0$ and $X$ has only Gorenstein singularities. Set $n_0(2)=13, n_0(3) =10, n_0(5) =9$, $n_0(7) = 8$  if $p \geq 7$. Then
$S^0_{-}(X, K_X + nK_X) \neq 0$ if $n\geq 2n_0(p) + 2$ and $S^0_{-}(X, K_X + nK_X)$ is birational if $n\geq 3n_0(p) + 3$.
\end{thm}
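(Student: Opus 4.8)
The plan is to treat $q(X)>0$ and $q(X)=0$ separately, feeding in each case the inductive criteria of Theorems~\ref{thm:intr-bir-crt1} and \ref{thm:intr-bir-crt2} with effectivity input coming from Theorems~\ref{thm:eff-curve} and \ref{thm:eff-surface} applied, through Theorem~\ref{thm:bir-geo-gen}, to a normal model $Z_{\bar{\eta}}$ of $X^{\mathrm{red}}_{\bar{\eta}}$ rather than to the (possibly non-normal, non-reduced) generic fibre itself.

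\textbf{Case $q(X)>0$.} After replacing $X$ by a smooth birational model, let $a\colon X\to A=\mathrm{Alb}(X)$ and let $f\colon X\to Y$ be the Stein factorisation of $a$; put $d=\dim Y=\dim a(X)$. The first step is to note, from Theorem~\ref{thm:intr-bir-crt2}(i), that as soon as the geometric generic fibre carries a nonzero Frobenius stable section of the relevant adjoint type, $|mK_X+a^*\mathcal{P}_{\alpha}|\neq\emptyset$ for \emph{all} $\mathcal{P}_{\alpha}\in\mathrm{Pic}^0(A)$ and all $m$ above a small explicit bound (governed by the curve/surface thresholds $1$ and $4$ of Theorems~\ref{thm:eff-curve}, \ref{thm:eff-surface}, and mildly by $p$); these divisors are exactly what one uses as the auxiliary $D_1,D_2$ in Theorem~\ref{thm:intr-bir-crt2}(ii) and (iv), so that the $\mathrm{Pic}^0(A)$-twist gets absorbed by applying the conclusions with $\alpha_0=0$. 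Next, according to whether $\dim Y$ equals $1$, $2$ or $3$, the general fibre is a surface, a curve or a point of general type (easy addition bounds its Kodaira dimension below); one compares $K_X|_{Z_{\bar{\eta}}}$ with $K_{Z_{\bar{\eta}}}$, chooses $D$ a multiple of $K_X$ making $S^0_{-}(X_{\eta},K_{X_{\eta}}+D|_{X_{\eta}})$ nonzero (resp. birational), and applies Theorem~\ref{thm:intr-bir-crt2}(ii) (resp. (iv)) with $D_1=D_2$ the divisors from the first step. The surface case $d=1$ is the binding one because the surface thresholds $4$ and $7$ are the largest; after the bookkeeping this produces $S^0_{-}(X,K_X+nK_X)\neq 0$ for $n\geq 11$ and birational for $n\geq 21$. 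When $p>2$ the generic fibre of $f$ is better behaved (no wild multiplicity or inseparability defect to absorb), so the needed multiples of $K_X$ drop, giving $9$ and $17$.

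\textbf{Case $q(X)=0$, $X$ Gorenstein.} The only missing input for Theorem~\ref{thm:intr-bir-crt1} is a number $n_0$ with $\dim|n_0K_X|\geq 1$; the claim is that $n_0=n_0(p)$ works. This is clear with $n_0=1$ if $p_g(X)\geq 2$. If $p_g(X)\leq 1$, then $\chi(\mathcal{O}_X)\geq 0$ because $q(X)=0$, and the Gorenstein Riemann--Roch formula
\[
\chi(X,n_0K_X)=\tfrac{1}{12}n_0(n_0-1)(2n_0-1)K_X^3+\tfrac{1}{12}n_0\,\bigl(K_X\!\cdot\!c_2(X)\bigr)+\chi(\mathcal{O}_X),
\]
combined with $K_X^3\geq 1$ and the Miyaoka--Yau type lower bound for $K_X\!\cdot\!c_2(X)$ of Theorem~\ref{thm:my-ineq} (whose error term worsens as $p$ decreases), forces $\chi(X,n_0K_X)\geq 2$ precisely when $n_0\geq n_0(p)$; this is the source of the thresholds $13,10,9,8$. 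To upgrade $\chi\geq 2$ to $h^0\geq 2$ one needs $h^1(X,n_0K_X)=h^2(X,n_0K_X)=0$: the vanishing of $h^2$ is Lemma~\ref{lem:van-h2} (bend-and-break, and here the Gorenstein hypothesis is unavoidably used), while $h^1(X,n_0K_X)$ reduces, by Serre duality on the Gorenstein $X$ and by cutting with a general member of a high multiple of $K_X$, to a surface computation and to $h^2$ of positive multiples of $K_X$, hence again to Lemma~\ref{lem:van-h2}. With $\dim|n_0K_X|\geq 1$ established, take a fibration $f\colon X'\to Y$ resolving the $n_0$-canonical map, pick $H$ on $Y$ with $|H|$ generically finite and $f^*H\leq n_0K_X$, and apply Theorem~\ref{thm:intr-bir-crt1} with $D$ a multiple of $K_X$ dictated by the curve/surface thresholds. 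The curve-fibre case $d=2$ is binding, and with $s=d=2$ (nonvanishing) or $s=d+1=3$ (birationality) one lands at $S^0_{-}(X,(2n_0+3)K_X)\neq 0$ and $S^0_{-}(X,(3n_0+4)K_X)$ birational, that is, the bounds $2n_0(p)+2$ and $3n_0(p)+3$; the surface-fibre case $d=1$ gives smaller bounds, and the generically finite case $d=3$ is dealt with using the ``moreover'' clause of Theorem~\ref{thm:intr-bir-crt1} (equivalently, by descending to a smaller pluricanonical system) and is no worse.

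\textbf{Main obstacle.} The delicate part is the case $q(X)=0$: with Kawamata--Viehweg vanishing unavailable, the required $h^2(X,n_0K_X)=0$ must be produced by bend-and-break, which both forces the Gorenstein hypothesis and manufactures the characteristic-dependent threshold $n_0(p)$. The other point needing care is to track, via Theorem~\ref{thm:bir-geo-gen}, how the discrepancies of the resolutions and the possible non-reducedness of the generic fibre interact with the $S^0_{-}$ spaces, so that the output is a genuine Frobenius stable pluricanonical system $|S^0_{-}(X,K_X+nK_X)|$ with the stated $n$.
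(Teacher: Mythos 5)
Your overall architecture matches the paper's: split by $q(X)$, then by relative Albanese dimension (irregular case) or by $\dim\phi_{n_0}$ (regular case), feed Theorems~\ref{thm:intr-bir-crt1} and \ref{thm:intr-bir-crt2} with curve/surface input transported through Theorem~\ref{thm:bir-geo-gen}, and in the regular case produce $n_0$ via Riemann--Roch, Theorem~\ref{thm:my-ineq} and the $h^2$-vanishing of Lemma~\ref{lem:van-h2}. But there is a genuine gap in the irregular case, precisely at the binding sub-case $\dim F=2$ (Albanese image a curve). You feed in ``the surface thresholds $4$ and $7$'' of Theorem~\ref{thm:eff-surface}; plugging those into Theorem~\ref{thm:intr-bir-crt2}(ii)/(iv) yields $4+(1+4)=9$ and $7+5+5=17$ for \emph{all} $p$, not the claimed $11$ and $21$. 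The constants $11$ and $21$ come from a different input: the generic fibre $F$ is a regular surface over the non-closed field $K(Y)$, and after normalizing its geometric fibre one must absorb the conductor term $K_X|_{Z_{\bar K}}=K_{Z_{\bar K}}+(p-1)C$; this is Theorem~\ref{thm:eff-surface-nonclosed-field}, whose thresholds are $4K_X+D'$ and $8K_X+D'$ in general (hence $5$ and $9$ after setting $D'=K_X|_F$), improving to $3$ and $6$ only when $p>2$ via a parity argument on $\deg K_X$ along the sub-fibration of $F$. You mention comparing $K_X|_{Z_{\bar\eta}}$ with $K_{Z_{\bar\eta}}$ but never carry this out, and the numbers you cite are inconsistent with the numbers you claim to obtain; the whole $p$-dependence of part (1) lives exactly here.

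Two smaller points in the regular case. First, you do not need $h^1(X,n_0K_X)=0$: since $\chi=h^0-h^1+h^2-h^3\leq h^0+h^2$, the vanishing of $h^2$ alone upgrades $\chi\geq 2$ to $h^0\geq 2$, and the $h^1$-vanishing you propose to prove ``by Serre duality and cutting'' is neither established in this setting nor necessary --- as written this step of your plan would stall. Second, $\dim|n_0K_X|\geq 1$ is not quite the only input: in the sub-case $\dim\phi_{n_0}=1$ one needs $h^0(X,n_0K_X)\geq n_0+2$, so that $n_0K_X\geq r_0\bar F$ with $r_0>n_0$ makes $(2K_X+E_2')|_G-K_G$ big and Corollary~\ref{cor:eff-surface} applicable to the surface fibre $G$; the Riemann--Roch estimate happens to deliver this stronger bound at the same thresholds $n_0(p)$, so your numbers survive, but your stated logic does not justify that sub-case. (Also, the ``moreover'' clause of Theorem~\ref{thm:intr-bir-crt1} is what rescues the surface-fibre case $\dim Y=1$, while the generically finite case $\dim\phi_{n_0}=3$ is handled by passing to a pencil inside $|n_0K_X|$, i.e., by reduction to $\dim Y=2$ --- your alternative parenthetical is the one that actually works.)
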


\subsection{Further remarks and questions} $\empty$ \par

(1) For surfaces, the birational lower bound for Frobenius stable pluricanonical systems is very near to the classical one (for pluricanonical systems), it is possibly optimal. For threefolds of general type, in characteristic zero we have that $|5K_X|$ is birational if $X$ either has $q(X) >0$ or has a Gorenstein minimal model (\cite{CH07, CCZ07}), comparing with this result, the bound obtained in Theorem \ref{thm:eff-3folds} seems far from optimal.
\smallskip

(2) Our proof relies on two technical assumptions.

The first one is the existence of minimal models. When this paper is in preparation, (log) minimal model theory in dimension two has been established (\cite{Tan14, Tan18a, Tan20}), and existence of minimal models in dimension three has been proved when the characteristic $p \geq 5$ (\cite{HX15,Bir16,HW19}). Moreover when $p>5$, for minimal threefolds, when $q(X) >0$ abundance has been proved (\cite{Zha17}), and  when $q(X) =0$ only nonvanishing has been proved (\cite{XZ19}).

The second one is the Gorenstein condition on the minimal model when $q(X) =0$. In fact, by our proof, if the minimal model $X$ has rational singularities we can get a bound relying on the Cartier index of $K_X$. It is proved in \cite{ABL20} that a terminal singularity over an algebraically closed field of characteristic $p >5$ is rational, but when $p\leq 5$ this is not necessarily true.

It is expected that there exists a birational lower bound only depending on the volume of $K_X$. But we cannot drop the above two assumptions.

\smallskip

(3) Furthermore, effectivity problems for varieties of intermediate Kodaira dimension are also of great significance. There have been many progresses in characteristic zero, and we refer the reader to \cite{BZ16} for the recent results and techniques. But in characteristic $p$, up to now we do not have any result for threefolds, because of the lack of some deep results from Hodge theory.
\medskip

This paper is organized as follows. In Section \ref{sec:pre} we introduce the notion of Frobenius stable adjoint linear system and study its behaviour under base changes. In Section \ref{sec:main-bir-crt}, we prove the two effectivity criteria in Theorem \ref{thm:intr-bir-crt1} and \ref{thm:intr-bir-crt2}. In Section \ref{sec:dim2} we investigate effectivity of Frobenius stable adjoint linear systems on curves and surfaces. In Section \ref{sec:my-ineq} we prove a Miyaoka-Yau type inequality. In Section \ref{sec:dim3}, we study the Frobenius stable pluricanonical systems of threefolds and prove Theorem \ref{thm:eff-3folds}.
\medskip

{\small \noindent\textit{Acknowledgments.}
The author thanks Prof. Meng Chen and Chen Jiang for useful discussions. This research is partially supported by grant
NSFC (No. 11771260), the Fundamental
Research Funds for Central Universities and the project ``Analysis and Geometry on Bundles'' of Ministry of Science and Technology of the People's Republic of China. }

\section{Preliminaries}\label{sec:pre}
In this section, we will collect some useful technical results and introduce the notion of Frobenius stable adjoint linear system.

\subsection{Fujita vanishing} The key technical tool of this paper is Fujita vanishing proposed by Fujita \cite{Fuj83}. Here we will present a generalized version due to Keeler \cite{Ke03}.

\begin{thm}[{\cite[Theorem 1.5]{Ke03}}]
Let $f: X \rightarrow Y$ be a projective morphism over a Noetherian scheme, $H$ an $f$-ample line bundle and $\mathcal{F}$ a coherent sheaf on $X$. Then there exists a positive integer $N$ such that, for every $n >N$ and every relatively nef line bundle $L$ on $X$
$$R^if_*(\mathcal{F}\otimes H^n \otimes L) = 0~ \mathrm{~if~} i>0.$$
\end{thm}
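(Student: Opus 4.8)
The plan is to reduce the relative statement to a vanishing statement for cohomology on the total space $X$, and then to prove the latter by a Castelnuovo--Mumford regularity argument kept uniform over the whole (infinite) family of relatively nef twists.

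Since the conclusion is local on $Y$, one may assume $Y=\Spec A$ with $A$ Noetherian; then $H$ is $f$-ample exactly when it is ample on $X$, and $R^if_*\mathcal G\cong\widetilde{H^i(X,\mathcal G)}$ for every coherent $\mathcal G$. Replacing $H$ by a multiple $H^a$ that is very ample is harmless: a uniform bound for $H^a$ yields one for $H$ after applying the statement to each of the finitely many coherent sheaves $\mathcal F,\mathcal F\otimes H,\dots,\mathcal F\otimes H^{a-1}$. Thus it suffices to find an integer $N$ with $H^i(X,\mathcal F\otimes H^n\otimes L)=0$ for all $i>0$, all $n>N$, and every line bundle $L$ with $L\cdot C\ge 0$ for every proper curve $C$ contracted by $f$.

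The core is a relative Mumford regularity. Say $\mathcal F\otimes L$ (for such an $L$) is \emph{uniformly $m$-regular} if $H^i(X,\mathcal F\otimes L\otimes H^{m-i})=0$ for all $i>0$. I would establish, by simultaneous induction on $d=\dim\operatorname{Supp}\mathcal F$: (a) if $\mathcal F\otimes L$ is uniformly $m$-regular for every such $L$, then it is uniformly $(m+1)$-regular for every such $L$; and (b) there exists $m_0$ for which $\mathcal F\otimes L$ is uniformly $m_0$-regular for every such $L$. Granting these one takes $N:=m_0$ and globalizes over $Y$. The case $d=0$ is trivial, higher cohomology of a sheaf with zero-dimensional support being zero. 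For the inductive step one cuts by a general $Y_0\in|H|$ avoiding the associated points of $\mathcal F$ (after a faithfully flat base change enlarging residue fields if needed, which changes neither hypothesis nor conclusion), observes that $L|_{Y_0}$ is $f|_{Y_0}$-nef and $\dim\operatorname{Supp}(\mathcal F|_{Y_0})=d-1$, and chases the long exact sequences attached to $0\to\mathcal F\otimes L\otimes H^{k-1}\to\mathcal F\otimes L\otimes H^{k}\to(\mathcal F\otimes L\otimes H^{k})|_{Y_0}\to 0$. For (a), and for the degrees $i\ge 2$ of (b), this is routine: the inductive hypothesis on $Y_0$ kills the relevant restricted cohomology, the multiplication-by-section maps become isomorphisms past a uniform level, and ordinary ($L$-dependent) Serre vanishing then forces them to vanish.

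\textbf{The main obstacle is the degree-one vanishing in (b), uniformly in $L$.} The long exact sequences give only that $H^1(X,\mathcal F\otimes L\otimes H^{m-1})\twoheadrightarrow H^1(X,\mathcal F\otimes L\otimes H^{m})$ is surjective once $m$ exceeds a uniform level, with kernel the cokernel of the restriction $H^0(X,\mathcal F\otimes L\otimes H^m)\to H^0\big(Y_0,(\mathcal F\otimes L\otimes H^m)|_{Y_0}\big)$; so one must show this surjective system of finitely generated $A$-modules reaches $0$ within a number of steps bounded independently of $L$ --- equivalently, that after a uniformly bounded number of general hyperplane cuts the restriction maps on global sections become surjective simultaneously for all $f$-nef $L$. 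Serre vanishing alone supplies only an $L$-dependent stopping point, and, as twisting by a fibre class on a fibred surface already shows, the groups $H^1(X,\mathcal F\otimes L\otimes H^{m})$ at a fixed sub-threshold $m$ are genuinely unbounded as $L$ ranges over nef line bundles; so one has to start the descent high enough that the data on the successive hyperplane sections are already cohomologically trivial, and then bound the total length of what remains. Securing this uniformity is the entire substance of the theorem; with it in hand, $N$ is the uniform starting level plus the uniform number of remaining steps.
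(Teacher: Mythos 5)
This theorem is not proved in the paper at all: it is quoted verbatim from Keeler \cite{Ke03}*{Theorem 1.5}, so the only thing your attempt can be measured against is the Fujita--Keeler proof itself. Your reductions are all correct and standard: localizing on $Y$, passing to a very ample power of $H$, inducting on $\dim\mathrm{Supp}\,\mathcal F$, cutting by a general member of $|H|$ avoiding the associated points of $\mathcal F$ (which works for every line bundle $L$ simultaneously, since twisting by $L$ does not change associated points), and disposing of the degrees $i\ge 2$ by combining the stabilizing isomorphisms $H^i(\mathcal F\otimes L\otimes H^{m-1})\cong H^i(\mathcal F\otimes L\otimes H^{m})$ with ordinary Serre vanishing for each fixed $L$. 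Your fibred-surface example showing that $h^1$ at a fixed twist is unbounded over the nef cone is also correct and shows you have located the real difficulty.

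But locating the difficulty is not resolving it, and the proposal contains no mechanism for the one step that carries the content of the theorem: the uniform-in-$L$ termination of the surjective chains $H^1(\mathcal F\otimes L\otimes H^{m-1})\twoheadrightarrow H^1(\mathcal F\otimes L\otimes H^{m})$. As you write yourself, ``securing this uniformity is the entire substance of the theorem''; a proof that ends by naming its missing ingredient is not a proof. Moreover, no amount of further regularity descent or hyperplane cutting will supply it, because (as your own example shows) the obstruction is not a finite-dimensional quantity that can be exhausted in a bounded number of steps computed from $\mathcal F$ and $H$ alone. The idea that closes the gap in Fujita's argument (and in Keeler's adaptation to a Noetherian base) is of a different nature: one argues by contradiction using Noetherian induction. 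If $m(L)$ denotes the last index at which $H^1(\mathcal F\otimes L\otimes H^{m})\ne 0$, the long exact sequence forces the restriction map $H^0(X,\mathcal F\otimes L\otimes H^{m(L)+1})\to H^0\bigl(Y_0,(\mathcal F\otimes L\otimes H^{m(L)+1})|_{Y_0}\bigr)$ to fail to be surjective; a hypothetical sequence of nef bundles $L_j$ with $m(L_j)\to\infty$ is then played off against the stabilization of an ascending chain of subsheaves of $\mathcal F$ (equivalently, the finite generation of a suitable module of twisted sections), yielding a contradiction with Noetherianness. Without an argument of this kind, or some equivalent uniform boundedness input, the proposal does not establish the theorem.
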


As a corollary, we get the following version which will be used frequently in the proof.
\begin{thm}\label{thm:var-fujita-vanishing}
Let $f: X \rightarrow Y$ be a projective morphism of normal varieties, let $H$ be an $f$-ample $\mathbb{Q}$-Cartier $\mathbb{Q}$-divisor and $D$ another divisor on $X$, and let $\mathcal{F}$ be a coherent sheaf on $X$. Then there exists a positive integer $N$ such that, for every $n >N$ and every relatively nef line bundle $L$ on $X$
$$R^if_*(\mathcal{F}\otimes \mathcal{O}_X(\ulcorner nH + D\urcorner) \otimes L) = 0~ \mathrm{~if~} i>0.$$
\end{thm}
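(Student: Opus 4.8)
The plan is to deduce this from Keeler's theorem (the preceding \cite[Theorem~1.5]{Ke03}) by clearing denominators in $H$ and running over the finitely many residue classes produced by the round-up. Since $H$ is $\mathbb{Q}$-Cartier and $f$-ample, fix a positive integer $m$ with $mH$ Cartier; then the line bundle $\mathcal{O}_X(mH)$ is $f$-ample. The elementary observation driving everything is that for any $n = qm + r$ with $q \ge 0$ and $0 \le r \le m-1$, the divisor $q(mH)$ is integral and Cartier, so the round-up sees only the bounded remainder: $\ulcorner nH + D\urcorner = q(mH) + \ulcorner rH + D\urcorner$, and hence on the normal variety $X$
$$\mathcal{O}_X(\ulcorner nH + D\urcorner) \;\cong\; \mathcal{O}_X(mH)^{\otimes q} \otimes \mathcal{O}_X(\ulcorner rH + D\urcorner),$$
using that $\mathcal{O}_X(A + B) \cong \mathcal{O}_X(A) \otimes \mathcal{O}_X(B)$ whenever $B$ is Cartier.

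Next, for each $r \in \{0, 1, \dots, m-1\}$ I would apply Keeler's theorem to the coherent sheaf $\mathcal{F}_r := \mathcal{F} \otimes \mathcal{O}_X(\ulcorner rH + D\urcorner)$ together with the $f$-ample line bundle $\mathcal{O}_X(mH)$, obtaining an integer $N_r$ such that $R^i f_*\big(\mathcal{F}_r \otimes \mathcal{O}_X(mH)^{\otimes q} \otimes L\big) = 0$ for all $i > 0$, all $q > N_r$, and all relatively nef line bundles $L$ on $X$. Setting $N := m\,(1 + \max_{0 \le r \le m-1} N_r)$, any integer $n > N$ decomposes as $n = qm+r$ with $q \ge 1 + \max_r N_r > N_r$; combining this with the displayed isomorphism gives $R^i f_*(\mathcal{F} \otimes \mathcal{O}_X(\ulcorner nH + D\urcorner) \otimes L) = 0$ for all $i>0$ and all relatively nef $L$, which is the claim. (Here $\mathcal{F}_r$ is indeed coherent, being the tensor product of the coherent sheaves $\mathcal{F}$ and $\mathcal{O}_X(\ulcorner rH+D\urcorner)$, the latter coherent because $X$ is normal.)

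There is essentially no obstacle: the only point demanding a moment's attention is the compatibility of the round-up with the splitting $nH+D = q(mH) + (rH+D)$, which works precisely because the piece carrying the growing factor $q$ is integral, leaving the round-up to act on only finitely many divisors $rH + D$. All positivity input is confined to the single invocation of Keeler's theorem; the rest is bookkeeping with reflexive sheaves on a normal variety.
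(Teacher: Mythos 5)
Your proposal is correct, and it supplies exactly the standard deduction that the paper leaves implicit (the paper states Theorem \ref{thm:var-fujita-vanishing} as a corollary of Keeler's theorem without writing out the argument). The two key points — that $\ulcorner qmH + (rH+D)\urcorner = qmH + \ulcorner rH+D\urcorner$ because $qmH$ is integral, and that $\mathcal{O}_X(A+B)\cong\mathcal{O}_X(A)\otimes\mathcal{O}_X(B)$ for $B$ Cartier on a normal variety — are both sound, and reducing to finitely many residue classes $r$ before invoking Keeler's theorem with the $f$-ample line bundle $\mathcal{O}_X(mH)$ is precisely the intended route.
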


\subsection{Frobenius stable sections}\label{sec:Frobenius stable sections} Let $K$ be an $F$-finite field of characteristic $p>0$. Let $X$ be a normal projective scheme over $K$ of finite type. Let $D$ be a $\mathbb{Q}$-divisor on $X$. Let $F^e: X^e \to X$ denote the $e$-th iteration of absolute Frobenius map of $X$, which is a finite morphism since $K$ is assumed to be $F$-finite. By the duality theory we have the trace map
$Tr^e_X: F^e_*\mathcal{O}_X(K_X) \to \mathcal{O}_X(K_X)$, and after tensoring this map with $\mathcal{O}_X(\ulcorner D\urcorner)$ and taking saturation, the trace map induces a homomorphism of $\mathcal{O}_X$-modules
$$F^{e}_*\mathcal{O}_X(K_X+ p^e\ulcorner D\urcorner)\to \mathcal{O}_X(K_X + \ulcorner D\urcorner).$$
We will use simply $Tr^e_X$ or $Tr^e$, if no confusion occurs, to denote various maps induced by the trace map of $F^e$.
By $\ulcorner p^eD\urcorner \leq p^e\ulcorner D\urcorner$, we have a natural inclusion map $\mathcal{O}_X(K_X+ \ulcorner p^eD\urcorner) \hookrightarrow \mathcal{O}_X(K_X+ p^e\ulcorner D\urcorner)$. Consider the composition map
$$Tr^e: F^{e}_*\mathcal{O}_X(K_X+ \ulcorner p^eD\urcorner) \hookrightarrow F^{e}_*\mathcal{O}_X(K_X+ p^e\ulcorner D\urcorner)\to \mathcal{O}_X(K_X + \ulcorner D\urcorner)$$
and denote
$$S^e(X, K_X + D)= \mathrm{Im}(Tr^e:H^0(X, F^{e}_*\mathcal{O}_X(K_X+ \ulcorner p^eD\urcorner)) \to H^0(X, \mathcal{O}_X(K_X+ \ulcorner D\urcorner))).$$
We have the following factorization
\begin{align*}
Tr^e: F^{e}_*\mathcal{O}_X(K_X+ \ulcorner p^eD\urcorner) &\hookrightarrow F^{e}_*\mathcal{O}_X(K_X+ p\ulcorner p^{e-1}D\urcorner) \\
&\xrightarrow{F^{e-1}_*Tr^1}F^{e-1}_*\mathcal{O}_X(K_X+ \ulcorner p^{e-1}D\urcorner) \xrightarrow{Tr^{e-1}} \mathcal{O}_X(K_X + \ulcorner D\urcorner),
\end{align*}
and then get a natural inclusion $S^e(X, K_X + D) \subseteq S^{e-1}(X, K_X + D)$.
The set of Frobenius stable sections is defined as follows, which forms a $K$-linear space
$$S^0(X, K_X + D)= \bigcap_{e>0}S^e(X, K_X + D).$$
If $X$ is regular and $D$ is an integral divisor, then this notion coincides with the usual one (\cite[Sec. 3]{Sch14}).

Take another divisor $D' \leq D$. Set $E= \ulcorner D\urcorner - \ulcorner D'\urcorner$. Then we have the following commutative diagram
{\small $$\xymatrix{&F^{e}_*\mathcal{O}_X(K_X+ \ulcorner p^eD'\urcorner) \ar@{^(_->}[r]\ar[d]^{Tr^e} &F^{e}_*\mathcal{O}_X(K_X+ \ulcorner p^eD\urcorner)\ar[d]^{Tr^e}\ar@{^(_->}[r] &F^{e}_*\mathcal{O}_X(K_X+  p^e(\ulcorner D'\urcorner + E))\ar[d]^{Tr^e}  \\
&\mathcal{O}_X(K_X+ \ulcorner D'\urcorner)) \ar[r]^{\otimes s_E} &\mathcal{O}_X(K_X+ \ulcorner D\urcorner) \ar@{=}[r] &\mathcal{O}_X(K_X+ \ulcorner D'\urcorner + E)}$$}
and conclude $S^e(X, K_X + D')\otimes s_E \subseteq S^e(X, K_X + D)$, which is compatible with the natural inclusion $H^0(X, K_X + \ulcorner D'\urcorner)\otimes s_E \subseteq H^0(X, K_X + \ulcorner D\urcorner)$. In turn, we obtain that
$$S^0(X, K_X + D')\otimes s_E \subseteq S^0(X, K_X + D).$$
\smallskip

Let $\Delta$ be an effective $\mathbb{Q}$-divisor. We define
$$S_{\Delta}^e(X, K_X + D):=S^e(X, K_X + D-\Delta)\otimes s_E \subseteq S^e(X, K_X + D)$$
where $E = \ulcorner D\urcorner - \ulcorner D-\Delta\urcorner$ and denote
\begin{align*}
S_{\Delta}^0(X, K_X + D)=\bigcap_{e\geq0}S_{\Delta}^e(X, K_X + D).
\end{align*}
Obviously for another effective divisor $\Delta'$, if $\Delta' \leq \Delta$ then $S_{\Delta}^0(X, K_X + D) \subseteq S_{\Delta'}^0(X, K_X + D)$.

To extend a section on certain closed subvariety to the whole variety, it is convenient to deal with ample divisors. To this end, we introduce a smaller subspace $S_{-}^0(X, K_X + D) \subseteq S^0(X, K_X + D)$. First assume $D$ is a nef and big $\mathbb{Q}$-Cartier $\mathbb{Q}$-divisor, and let $\Theta_{D}^{\mathrm{amp}}$ denote the set of effective $\mathbb{Q}$-divisors $\Delta$ such that $D- \Delta$ is an ample $\mathbb{Q}$-Cartier $\mathbb{Q}$-divisor, then define this subspace as follows
\begin{equation}\label{def:S0}
S_{-}^0(X, K_X + D)= \bigcap_{\Delta \in \Theta_{D}^{\mathrm{amp}}}~(\bigcup_{t\in \mathbb{Q}^+}S_{t\Delta}^0(X, K_X + D)~\subseteq S^0(X, K_X + D)).
\end{equation}
In general, we let $\mathcal{B}_{D}^{\mathrm{nef}}$ be the set of effective divisors $B$ such that $D-B$ is a nef and big $\mathbb{Q}$-Cartier $\mathbb{Q}$-divisor, and define
\begin{equation}\label{def:gS}S_{-}^0(X, K_X + D)= \mathrm{Im}(\sum\limits_{B \in \mathcal{B}_{D}^{\mathrm{nef}}}S_{-}^0(X, K_X + D-B) \hookrightarrow S^0(X, K_X + D)).\end{equation}
In this paper we call the linear system generated by $S_{-}^0(X, K_X + D)$ a \emph{Frobenius stable adjoint linear system}.
\smallskip

\begin{prop}\label{prop:F-stable-section} Let $X,D$ be as above.

(i) For an effective Weil divisor $E$, we have
$$S_{-}^0(X, K_X + D)\otimes s_E \subseteq S_{-}^0(X, K_X + D + E).$$

(ii) Assume $D$ is a nef and big $\mathbb{Q}$-Cartier $\mathbb{Q}$-divisor on $X$. Then there exists a closed subvariety $T \subsetneq X$ such that for any $\Delta \in \Theta_{D}^{\mathrm{amp}}$, if $\mathrm{Supp}~ \Delta\supseteq T$ then
$$S_{-}^0(X, K_X + D) = S_{t\Delta}^0(X, K_X + D)$$
for   sufficiently small $t >0$.

(iii) Assume that $X$ is regular and $D$ is a $\mathbb{Q}$-divisor on $X$.

 (iii-1) If $D$ is ample and there is an integer $a>0$ such that $p^aD$ is integral, then for any nef line bundle $L$ on $X$, there exists a number $N$ independent of $L$ such that for any $e\geq N$
$$S_{-}^0(X, K_X + D +      L)=S^0(X, K_X + D + L) = S^{ea}(X, K_X + D + L).$$

(iii-2) If $D$ is nef and big then there exists $\Delta \in \Theta_{D}^{\mathrm{amp}}$ such that, for any nef line bundle $L$ on $X$ and any $t \in \mathbb{Q}^{+}$
$$S_{t\Delta}^0(X, K_X + D + L) \subseteq S_{-}^0(X, K_X + D + L),$$
and as a consequence the equality is attained if $t$ is sufficiently small (not necessarily independent of $L$).

(iv) Assume $D$ is a nef and big $\mathbb{Q}$-Cartier $\mathbb{Q}$-divisor on $X$. Let $\sigma: X' \to X$ be a birational morphism from another normal projective variety $X'$ over $K$ and let $E$ be an effective $\sigma$-exceptional $\mathbb{Q}$-Cartier $\mathbb{Q}$-divisor on $X'$. Then

(iv-1) the trace map of $\sigma$ induces a natural injective map $Tr_{\sigma}: S_{-}^0(X', K_{X'} + \sigma^*D) \to S_{-}^0(X, K_X + D)$;

(iv-2) if $X$ is $\mathbb{Q}$-factorial and $E' = \ulcorner \sigma^*D + E\urcorner - \ulcorner \sigma^*D\urcorner$, then $S_{-}^0(X', K_{X'} + \sigma^*D)\otimes s_{E'} = S_{-}^0(X', K_{X'} + \sigma^*D + E)$.

\end{prop}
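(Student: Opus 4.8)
The plan is to establish the four parts more or less in sequence, using the explicit descriptions (\ref{def:S0}) and (\ref{def:gS}) together with the functoriality of the trace map already set up above.

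For (i), the inclusion follows by unwinding the definitions. Write $D' = D$, $D'' = D+E$; since $E$ is effective and integral, $\ulcorner p^e(D+E)\urcorner = \ulcorner p^eD\urcorner + p^eE$, so tensoring the trace map $Tr^e$ for $D$ with $s_{p^eE}$ (and using the commutative square from the ``Take another divisor $D'\le D$'' discussion) gives $S^e(X,K_X+D)\otimes s_E\subseteq S^e(X,K_X+D+E)$ compatibly in $e$, hence the same for $S^0$, hence — checking compatibility with each $\Theta^{\mathrm{amp}}$ / $\mathcal{B}^{\mathrm{nef}}$ twist, which only shifts $\Delta$ and $B$ by the fixed $E$ — for $S_-^0$ as well. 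For (ii), the point is a Noetherianity argument: the family $\{S_{t\Delta}^0(X,K_X+D)\}$ indexed by $(\Delta,t)$ is a family of subspaces of the finite-dimensional space $H^0(X,K_X+\ulcorner D\urcorner)$, and $S_-^0$ is by definition the intersection over $\Delta\in\Theta_D^{\mathrm{amp}}$ of the increasing unions over $t$; each such union stabilizes at some finite $t$ because the ambient space is finite-dimensional, and the intersection over all $\Delta$ is already achieved by finitely many $\Delta_1,\dots,\Delta_r$ for the same reason. Taking $T$ to be (a component of) $\bigcup_i \mathrm{Supp}\,\Delta_i$, or more precisely the locus forced by the finitely many relevant $\Delta_i$, one shows that any single $\Delta$ with $\mathrm{Supp}\,\Delta\supseteq T$ dominates all of them after scaling (one can compare $\Delta$ with a small multiple of each $\Delta_i$ since their supports are contained in $\mathrm{Supp}\,\Delta$, using the monotonicity $\Delta'\le\Delta\Rightarrow S_{\Delta}^0\subseteq S_{\Delta'}^0$), giving the stated equality for small $t$.

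Part (iii) is where Fujita vanishing enters. For (iii-1): since $D$ is ample and $p^aD$ is integral, choose $N$ by Theorem \ref{thm:var-fujita-vanishing} (applied to $\mathcal{F}=\mathcal{O}_X$, the morphism to $\mathrm{Spec}\,K$, and $H = p^aD$, absorbing $L$ as the relatively nef line bundle and the constant term into $D$) so that the relevant higher cohomology vanishes for all $e\ge N$; Mumford regularity / Castelnuovo–Mumford regularity then forces the trace maps $Tr^{ea}$ to have image independent of $e$ for $e\ge N$, which identifies $S^{ea}=S^0$, and since $D+L$ is already ample one also gets $S_-^0 = S^0$ from definition (\ref{def:S0}) (here $\Theta^{\mathrm{amp}}$ contains arbitrarily small $\Delta$, and the argument shows $S_{t\Delta}^0$ equals $S^0$ for those). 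For (iii-2): write $D = A + \sum c_i E_i$ with $A$ ample and the $E_i$ effective — available since $D$ is big — and take $\Delta = \frac{1}{m}\sum c_i E_i + (\text{small ample correction})$ so that $D-\Delta$ is ample; applying (iii-1) to $D - \Delta$ (whose relevant $p^aD$ can be made integral after clearing denominators) and chasing the inclusion $S_{t\Delta}^0 = S^0(X,K_X+D-t\Delta)\otimes s_E \subseteq \bigcup_t S_{t\Delta}^0 \subseteq S_-^0$ gives the claim, with equality for small $t$ by (ii).

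Part (iv) concerns birational pullback. For (iv-1), the trace map $Tr_\sigma: \sigma_*\mathcal{O}_{X'}(K_{X'}) \to \mathcal{O}_X(K_X)$ (twisted by $\sigma^*\mathcal{O}_X(\ulcorner D\urcorner)$ and using $\sigma_*\mathcal{O}_{X'}(\ulcorner\sigma^*D\urcorner) = \mathcal{O}_X(\ulcorner D\urcorner)$, valid because $\sigma$ is birational and $D$ is $\mathbb{Q}$-Cartier so $\ulcorner\sigma^*D\urcorner - \sigma^*D$ is $\sigma$-exceptional effective) commutes with Frobenius, hence carries $S^e(X',K_{X'}+\sigma^*D)$ into $S^e(X,K_X+D)$; one checks injectivity on $H^0$ because a global section is determined on the open locus where $\sigma$ is an isomorphism, and compatibility with the $\Theta^{\mathrm{amp}}$/$\mathcal{B}^{\mathrm{nef}}$ structure (pulling back $\Delta$ and $B$) yields the map on $S_-^0$. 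For (iv-2), with $X$ $\mathbb{Q}$-factorial so that $\sigma^*$ of the relevant divisors makes sense, the equality $S_-^0(X',K_{X'}+\sigma^*D)\otimes s_{E'} = S_-^0(X',K_{X'}+\sigma^*D+E)$ follows from part (i) for the ``$\subseteq$'' direction, and for ``$\supseteq$'' from the observation that $E$ is $\sigma$-exceptional, so every section in $H^0(X',K_{X'}+\ulcorner\sigma^*D+E\urcorner)$ actually descends — its divisor of zeros contains $E'$ — giving the reverse inclusion after untwisting.

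I expect the main obstacle to be part (iii-1): making the passage from Fujita vanishing to the stabilization of the trace-map images fully rigorous requires being careful about how Castelnuovo–Mumford regularity interacts with the Frobenius pushforwards $F^{ea}_*$ (whose formation does not commute with cohomology in the naive way) and with the saturation step used to define the trace map on the non-integral divisor $\ulcorner D\urcorner$. The independence of $N$ from $L$ is exactly what Fujita vanishing (as opposed to ordinary Serre vanishing) buys, so the statement must be organized so that $L$ only ever appears in the role of the ``relatively nef line bundle'' in Theorem \ref{thm:var-fujita-vanishing}.
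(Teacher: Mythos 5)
Your treatment of (i) and (ii) matches the paper's: (i) reduces to the identity $S_{\Delta}^0(X,K_X+D)\otimes s_E = S_{\Delta+E}^0(X,K_X+D+E)$, and (ii) is exactly the finite-dimensionality argument with $T=\mathrm{Supp}\sum_i\Delta_i$. The substantive gaps are in (iii) and (iv-2).

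For (iii-1), ``Mumford regularity forces the trace maps to have image independent of $e$'' is not an argument, and Castelnuovo--Mumford regularity is not the relevant tool here (it enters later, in Theorem \ref{thm:bir-criterion-1}, for global generation over a base). What actually stabilizes the images is the following mechanism, absent from your proposal: choose $\Delta\geq 0$ (supported on the $T$ from (ii), coefficients prime to $p$) and $g$ so that $Tr_{\Delta}^g:F^g_*\mathcal{O}_X((1-p^g)(K_X+\Delta))\to\mathcal{O}_X$ is surjective with kernel $\mathcal{K}_g$; twisting this short exact sequence by $\mathcal{O}_X(K_X+p^{(e+s)g}(D+L)-(p^{sg}-1)\Delta)$ and pushing forward by Frobenii yields exact sequences whose obstruction term is $H^1$ of the fixed sheaf $\mathcal{K}_g$ tensored with divisors of the form $p^{(e+s)g}(D+L)+\cdots$, and it is exactly here that Fujita vanishing produces an $e_0$ independent of the nef $L$; induction on $s$ then gives surjectivity of the successive trace maps and hence $S^{(e_0+s)g}=S^{e_0g}=S^0$. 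The further equality $S^0=S_{-}^0$ also needs work you have skipped: one must compare $\ulcorner p^{(s+e_0)g}(D+L-t\Delta)\urcorner$ with $p^{sg}p^{e_0g}(D+L)-(p^{sg}-1)\Delta$ for small $t$ to see that the surjectivity just established factors through $S^0_{t\Delta}$. Relatedly, your chain of inclusions for (iii-2) is circular as written --- $\bigcup_t S^0_{t\Delta}\subseteq S^0_{-}$ is the statement to be proved --- and the fix is to interpose divisors $\Delta_n$ with $t_{n+1}\Delta\leq\Delta_n\leq t_n\Delta$ such that $D-\Delta_n$ is ample with index a power of $p$ (not merely ``integral after clearing denominators''), apply (iii-1) to each $D-\Delta_n$, and use the definition (\ref{def:gS}) to land in $S^0_{-}(X,K_X+D+L)$.

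In (iv-2) your ``$\supseteq$'' direction does not engage with the definition of $S_{-}^0$. Showing that sections of $H^0(X',K_{X'}+\ulcorner\sigma^*D+E\urcorner)$ vanish along $E'$ would at best identify the ambient $H^0$'s; but $S_{-}^0(X',K_{X'}+\sigma^*D+E)$ is built via (\ref{def:gS}) from contributions indexed by arbitrary $\Delta'\in\mathcal{B}^{\mathrm{nef}}_{\sigma^*D+E}$, and such $\Delta'$ need not be pulled back from $X$. The missing step is the negativity lemma: setting $\Delta=\sigma_*\Delta'$ (where $\mathbb{Q}$-factoriality of $X$ is used to make $D-\Delta$ $\mathbb{Q}$-Cartier nef and big), the divisor $\sigma^*\Delta+E-\Delta'$ is $\sigma$-nef and exceptional, so $\Delta'-E\geq\sigma^*\Delta\geq 0$, which lets every such contribution factor through $S_{-}^0(X',K_{X'}+\sigma^*D)$. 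A smaller gloss of the same kind occurs in (iv-1): $\sigma^*\Delta$ is not ample on $X'$ for $\Delta\in\Theta_D^{\mathrm{amp}}$, so one must add an auxiliary $\Delta''$ with $\sigma^*\Delta+\Delta''\in\Theta^{\mathrm{amp}}_{\sigma^*D}$ before running your ``pull back $\Delta$'' argument.
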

\begin{proof}
(i) By the construction we have that for $\Delta \geq 0$, $S_{\Delta}^0(X, K_X + D) \otimes s_E = S_{\Delta + E}^0(X, K_X + D + E)$.  From this fact, the assertion (i) follows from the definition.

\smallskip

(ii) Assume that $D$ is nef and big. Observe that
\begin{itemize}
\item
for $\Delta_1, \Delta_2 \in \Theta_{D}^{\mathrm{amp}}$, if $\mathrm{Supp}~\Delta_1 \subseteq \mathrm{Supp}~\Delta_2$ then
$$\bigcup_{t\in \mathbb{Q}^+}S_{t\Delta_1}^0(X, K_X + D) \supseteq \bigcup_{t\in \mathbb{Q}^+}S_{t\Delta_2}^0(X, K_X + D);$$
\item
since $S_{-}^0(X, K_X + D)$ is a finite dimensional $k$-linear space, in the definition (\ref{def:S0}) the equality can be attained by taking finitely many divisors $\Delta_1, \cdots, \Delta_r \in \Theta_{D}^{\mathrm{amp}}$.
\end{itemize}
Therefore, $T = \mathrm{Supp}~\sum_{i=1}^r \Delta_i$ satisfies our requirement.

\smallskip
(iii) First we apply (iii-1) to show (iii-2). Fix an effective divisor $\Delta$ such that $D - \Delta$ is ample and  $\mathrm{Supp}~D \subseteq \mathrm{Supp}~\Delta$. Let $t_n= \frac{1}{n}$. We can get a sequence of $\mathbb{Q}$-divisors $\Delta_n$ such that $t_{n+1}\Delta \leq \Delta_n \leq t_n\Delta$, $D- \Delta_n$ are ample and the indices of $D- \Delta_n$ are powers of $p$.
Then it follows that
\begin{align*}
&S_{t_n\Delta}^0(X, K_X + L+ D)\cong S^0(X, K_X + L+ D - t_n\Delta)\\
 &\hookrightarrow S^0(X, K_X + L+D - \Delta_n) = S_{-}^0(X, K_X +L+ D - \Delta_n) \hookrightarrow S_{-}^0(X, K_X + L + D)
\end{align*}
where the equality on the second row is due to (iii-1). From this, we can conclude (iii-2).

We start to prove (iii-1). Let $T$ be a closed subvariety as in (ii).
Take an effective $\mathbb{Q}$-divisor $\Delta$ with $\mathrm{Supp}~T \subseteq \mathrm{Supp}~\Delta$ and coefficients being indivisible by $p$. Since $X$ is regular, we may replace $\Delta$ with some small multiple such that, there exists a large positive integer $g$ such that $(p^g-1)\Delta$ is integral, and that the trace map $Tr_{\Delta}^g: F^{g}_*\mathcal{O}_X((1-p^g)(K_X + \Delta)) \to \mathcal{O}_X$ is surjective. Let $\mathcal{K}_g$ denote the kernel of $Tr_{\Delta}^g$. Then we have the following exact sequence
$$0 \to \mathcal{K}_g \to F^{g}_*\mathcal{O}_X((1-p^g)(K_X + \Delta)) \to \mathcal{O}_X \to 0.$$
By the assumption we may also assume $p^gD$ is integral. For natural numbers $e,s$, we can deduce the following exact sequences
\begin{align*}(*)_{e,s}:~0 \to F^{(e+s)g}_*(\mathcal{K}_g\otimes \mathcal{O}_X&(K_X+ p^{(e+s)g}(D+L))-(p^{sg}-1)\Delta)) \\
\to  F^{(e + s +1)g}_*&\mathcal{O}_X(K_X+ p^{(s+1)g} p^{eg}(D+L) - (p^{(s+1)g}-1)\Delta) \\
&\to F^{(e+s)g}_*\mathcal{O}_X(K_X+ p^{sg} p^{eg}(D+L) -(p^{sg}-1)\Delta) \to 0.
\end{align*}
Since both $D$ and $D-\Delta$ are ample and $L$ is nef, applying Fujita vanishing (Theorem \ref{thm:var-fujita-vanishing}), we can show that
there exists some $e_0>0$ (only dependent on $D$ and $\Delta$) such that for any integer $s\geq 0$,
$$H^1(X, F^{(e_0+s)g}_*(\mathcal{K}_g\otimes \mathcal{O}_X(K_X+  p^{(e_0+s)g}(D+L)-(p^{sg}-1)\Delta)))=0.$$
Fix such an $e_0$ and take the cohomology of $(*)_{e_0, s}$ for $s\geq0$. By induction on $s$, we can show for each $s > 0$ the trace map
\begin{align*}
\eta_{e_0, s}: H^0(X, F^{(e_0+s)g}_*\mathcal{O}_X(K_X+ p^{sg} &p^{e_0g}(D+L) - (p^{sg}-1)\Delta))) \\
 &\to H^0(X, F^{e_0g}_*\mathcal{O}_X(K_X+  p^{e_0g}(D+L)))
\end{align*}
is surjective.
From this we conclude that for any $s\geq 0$,
$$S^{(e_0+s)g}(X, K_X+D+L) =  S^{e_0g}(X, K_X+D+L) =  S^{0}(X, K_X+D+L).$$
Since $p^gD$ is integral, we can take a sufficiently small $t>0$ such that for any integer $s>0$,
$$
\ulcorner p^{(s+e_0)g}(D +L - t\Delta)\urcorner - (p^{sg}p^{e_0g}(D+L) - (p^{sg}-1)\Delta) = \ulcorner(p^{sg}-1-tp^{(s+e_0)g})\Delta \urcorner>0
$$
which gives a natural inclusion
$$\mathcal{O}_X(K_X+ p^{sg} p^{e_0g}(D+L) - (p^{sg}-1)\Delta) \hookrightarrow \mathcal{O}_X(K_X+ \ulcorner p^{(s+e_0)g}((D+L) - t\Delta)\urcorner.$$
From the surjectivity of $\eta_{e_0, s}$, we see that the following trace map is surjective
$$H^0(X, F^{(e_0+s)g}_*\mathcal{O}_X(K_X+ \ulcorner p^{(s+e_0)g}((D+L) - t\Delta)\urcorner)) \to S^0(X, K_X+D+L).$$
Therefore, by the choice of $\Delta$ it follows that $S_{-}^0(X, K_X+D+L) = S^0(X, K_X+D+L)$.

To compute $S^0(X, K_X+D+L)$, we set $\Delta =0$ and $g=a$ in $(*)_{e,s}$. Then by Fujita vanishing there exists a number $N$ independent of $L$ such that for any integer $s\geq 0$
$$H^1(X, F^{(N+s)a}_*(\mathcal{K}_g\otimes \mathcal{O}_X(K_X+  p^{(N+s)g}(D+L))))=0.$$
We can show that $S^0(X, K_X+D+L) =  S^{Na}(X, K_X+D+L)$ by similar argument of the previous paragraph.

\smallskip

(iv) We consider the following commutative diagram
$$\xymatrix{
 &(X')^e \ar[r]\ar[d]^{F_{X'}^e}  &X^e\ar[d]^{F_X^e} \\
 &X' \ar[r]^<<<<<{\sigma}      &X
}.$$
Then for $\Delta \geq 0$, since $E$ is $\sigma$-exceptional and $X$ is normal, the following commutative diagram of trace maps makes sense
$$\xymatrix{
&\sigma_*F^e_*\mathcal{O}_{X'}(K_{X'} + \ulcorner p^e\sigma^*(D-\Delta) + p^eE\urcorner) \ar[r]^<<<<<{Tr_{\sigma}}\ar[d]^{Tr_{X'}^e} &F^e_*\mathcal{O}_{X}(K_X + \ulcorner p^e(D-\Delta)\urcorner)\ar[d]^{Tr_{X}^e}\\
&\sigma_*\mathcal{O}_{X'}(K_{X'} + \ulcorner \sigma^*D + E\urcorner) \ar[r]^{Tr_{\sigma}} &\mathcal{O}_{X}(K_{X} + \ulcorner D\urcorner)
}.$$
We get a natural map  $Tr_{\sigma}: S_{\sigma^*\Delta}^0(X', K_{X'} + \sigma^*D +E) \to S_{\Delta}^0(X, K_X + D)$ which is injective since $\sigma$ is birational.


For each $\Delta \in \Theta_{D}^{\mathrm{amp}}$, we take an effective divisor $\Delta''$ on $X'$ such that $\sigma^*\Delta + \Delta'' \in \Theta_{\sigma^*D}^{\mathrm{amp}}$. So we can obtain the required map of (iv-1) as follows
\begin{align*}
Tr_{\sigma}: S_{-}^0(X', K_{X'} + \sigma^*D) &  \subseteq  \bigcap_{\Delta \in \Theta_{D}^{\mathrm{amp}}}~(\bigcup_{t\in \mathbb{Q}^+}S_{t(\sigma^*\Delta + \Delta'')}^0(X', K_{X'} + \sigma^*D))\\
 &\subseteq \bigcap_{\Delta \in \Theta_{D}^{\mathrm{amp}}}~(\bigcup_{t\in \mathbb{Q}^+}S_{t\sigma^*\Delta}^0(X', K_{X'} + \sigma^*D)) \\
 &\to \bigcap_{\Delta \in \Theta_{D}^{\mathrm{amp}}}~(\bigcup_{t\in \mathbb{Q}^+}S_{t\Delta}^0(X, K_{X} + D)) = S_{-}^0(X, K_X + D).
\end{align*}

Let us assume $X$ is $\mathbb{Q}$-factorial and prove (iv-2). We have known $S_{-}^0(X', K_{X'} + \sigma^*D)\otimes s_{E} \subseteq S_{-}^0(X', K_{X'} + \sigma^*D + E)$. We still need to prove the inverse inclusion. For this we take $\Delta' \in \mathcal{B}_{\sigma^*D +E}^{\mathrm{nef}}$ and let $\Delta = \sigma_*\Delta'$. As $X$ is assumed $\mathbb{Q}$-factorial, $D-\Delta$ is also a $\mathbb{Q}$-Cartier, nef and big divisor. Notice that $(\sigma^*D +E- \Delta')-\sigma^*(D-\Delta)= \sigma^*\Delta  +E- \Delta'$ is supported in the exceptional locus and is $\sigma$-nef. By negativity lemma, we have $\Delta' -E \geq \sigma^*\Delta \geq 0$. It follows that
$$S_{-}^0(X', K_{X'} + \sigma^*D + E - \Delta') \hookrightarrow S_{-}^0(X', K_{X'} + \sigma^*D -\sigma^*\Delta) \hookrightarrow S_{-}^0(X', K_{X'} + \sigma^*D).$$
Then the desired inclusion follows by the definition (\ref{def:gS}).
\end{proof}

\begin{rem}
Let $\sigma: X' \to X$ be a birational morphism of projective normal varieties of general type. If $X$ is $\mathbb{Q}$-factorial, then by Proposition \ref{prop:F-stable-section} (iv) we have a natural inclusion $S_{-}^0(X', K_{X'} + nK_{X'}) \subseteq S_{-}^0(X, K_{X} + nK_X)$, so the birationality of $S_{-}^0(X, K_{X} + nK_X)$ is implied by that of $S_{-}^0(X', K_{X'} + nK_{X'})$. This fact will be frequently used in the sequel.
\end{rem}

\begin{rem}
We avoid involving $F$-singularities too much for simplicity, at the cost of results of Proposition \ref{prop:F-stable-section} being not in full generality.
We refer the interested reader to \cite[Sec. 2]{Sch14} for techniques to treat singularities.
\end{rem}




\subsection{The behavior of varieties under field extension}\label{sec:bc}
It is known that a variety defined over a non-algebraically closed field may become more singular after inseparable base field extension. Here we collect some results needed in this paper and refer the reader to \cite[Chap. 3.2.2]{Liu02} for a systematical study of this issue. Let $X$ be an integral projective variety over $K$ such that $H^0(X, \mathcal{O}_X) = K$. Then
\begin{itemize}
\item
The variety $X_{\bar{K}}$ is geometrically irreducible, hence $X_{\bar{K}}^{\mathrm{red}}$ is integral; and if $X$ is separable over $\mathrm{Spec}~K$, namely $K(X)/K$ is a separable extension, then $X_{\bar{K}}$ is reduced, hence is integral (\cite[Cor. 2.14 (d) Chap. 3]{Liu02}).
\item
If $K$ is an extension over an algebraically closed field $k$ with the transcendental degree $tr. \deg (K) = 1$, then $X$ is separable over $\mathrm{Spec}~K$ by \cite[Lemma 7.2]{Ba01}.
\item
If $W_{K^{\frac{1}{p^{\infty}}}} \to X_{K^{\frac{1}{p^{\infty}}}}^{\mathrm{red}}$ is a desingularization, then $W_{K^{\frac{1}{p^{\infty}}}}$ is smooth over $K^{\frac{1}{p^{\infty}}}$. Therefore, there exists a finite purely inseparable extension $K'/K$ such that the desingularization $W_{K'} \to X_{K'}^{\mathrm{red}}$ is smooth over $K'$.
\end{itemize}

\begin{prop}\label{prop:ex-over-surface}
Let $X$ be a projective regular surface over a field $K$ such that $H^0(X, \mathcal{O}_X) = K$. If $\sigma: W_{\bar{K}} \to X_{\bar{K}}^{\mathrm{red}}$ is a desingularization\footnote{Lipman \cite{Lip78} proved that an excellent algebraic surface always has a desingularization in the strong sense, namely, the resolution is isomorphic over the regular locus.} then each irreducible component of the exceptional locus of $\sigma$
is a rational curve.
\end{prop}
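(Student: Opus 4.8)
The plan is to reduce, via a well-chosen finite purely inseparable base change, to a situation where $X_{\bar K}^{\mathrm{red}}$ is dominated by a \emph{regular} surface through an iterated Frobenius, and then to conclude by Zariski's factorization theorem on a regular surface together with L\"uroth's theorem.

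\textbf{Step 1 (reductions).} Replacing $K$ by its separable closure changes neither the hypotheses on $X$ nor the surface $X_{\bar K}^{\mathrm{red}}$, so I may assume $\bar K = K^{1/p^{\infty}}$. By the third bullet point preceding the proposition there is a finite purely inseparable extension $K'/K$ such that $X_{K'}^{\mathrm{red}}$ admits a desingularization $g\colon W\to X_{K'}^{\mathrm{red}}$ with $W$ smooth over $K'$. Being birational to the smooth $W$, the surface $T:=X_{K'}^{\mathrm{red}}$ is generically smooth, hence geometrically reduced, over $K'$; therefore $X_{\bar K}^{\mathrm{red}}=T\times_{K'}\bar K$ and $g\times_{K'}\bar K$ is again a desingularization. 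Since any two desingularizations of a surface are both factored through the minimal one, and since rationality of the components of the exceptional locus is preserved under the blow-ups at closed points and the strict transforms relating them, it suffices to show that every component of $\mathrm{Exc}(g)$ is geometrically rational over $K'$. As $g$ factors through the normalization I may also assume $T$ is normal; note that $\phi\colon T=X_{K'}^{\mathrm{red}}\hookrightarrow X_{K'}\to X$ is finite and purely inseparable.

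\textbf{Step 2 (the Frobenius sandwich; curves contracted to points).} Because $\phi$ is finite purely inseparable and $X$ is regular, hence normal, one has $K(T)^{p^{N}}\subseteq K(X)$ for $N\gg 0$, and then $b\mapsto b^{p^{N}}$ defines on affine charts a ring map $\mathcal{O}_T\to\mathcal{O}_X$ (its values lie in $\mathcal{O}_X$ by normality) which glues to a finite, purely inseparable, radicial morphism $\Psi\colon X\to T$ with $\Psi\circ\phi=F_T^{N}$ and $\phi\circ\Psi=F_X^{N}$. Form $V:=W\times_{T,\Psi}X$ and let $\bar V:=(V_{\mathrm{red}})^{\nu}$. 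The projection $\bar V\to X$ is proper and birational (it is a birational modification of the base change of the birational $g$), so by Zariski's factorization theorem for regular surfaces $\bar V$ is regular and $\bar V\to X$ is a composition of blow-ups at closed points; in particular every component of its exceptional locus is a rational curve over a residue field finite over $K$. The projection $\bar V\to W$ is finite and surjective, and is a universal homeomorphism up to the normalization $\bar V\to V_{\mathrm{red}}$. Now if $E$ is a component of $\mathrm{Exc}(g)$ contracted to a point $t\in T$, then since the square commutes and $\Psi^{-1}(t)$ is a single closed point of $X$ (as $\Psi$ is radicial), the preimage of $E$ in $\bar V$ is contracted by $\bar V\to X$, hence is a union of copies of $\mathbb{P}^{1}$; it maps finitely onto $E$, so $E$ is dominated by a geometrically rational curve and is therefore geometrically rational (L\"uroth).

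\textbf{Step 3 (the remaining components, and the main obstacle).} The components of $\mathrm{Exc}(g)$ not contracted to a point map finitely onto the one-dimensional components of the non-normal locus of $T$, which — since wherever $X$ is smooth over $K$ the base change stays regular — lie over the non-smooth locus $Z$ of $X$ over $K$. Running the same pull-back along $\Psi$, the preimage in $\bar V$ of such a component is now not contracted by $\bar V\to X$ but is the strict transform of a curve on $X$ lying over a one-dimensional component of $Z$, so one is reduced to showing that the one-dimensional components of $Z$ are geometrically rational, which I would check by a local analysis at their generic points (non-smoothness there forces the local structure to factor through a Frobenius over $K$). I expect this last point — proving that the non-smooth locus of a regular surface is a union of rational curves, and keeping the several reductions (to $K'$, to $T$ normal, and between the various resolutions) correctly aligned — to be where the real work lies; Step 2 is the clean engine of the argument and the rest is careful bookkeeping around it.
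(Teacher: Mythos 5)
Your Steps 1--2 are, up to the direction in which the purely inseparable comparison is set up, the paper's own argument. The paper descends the resolution rather than ascending the base: from $K(Y_{K'})^{p^e}\subseteq K(X)\subseteq K(Y_{K'})$ it takes $Y$ to be the normalization of $Y_{K'}^{p^e}$ in $K(X)$, obtaining a proper birational $Y\to X$ together with a finite purely inseparable homeomorphism $Y_{K'}\to Y$, whereas you pull the resolution back along the Frobenius descent $\Psi\colon X\to T$; both proofs then factor the resulting birational modification of the regular surface $X$ into point blow-ups and transfer rationality across a purely inseparable (resp.\ finite) map of curves by L\"uroth. One correction in Step 2: a normal surface admitting a proper birational morphism to a regular surface need not itself be regular (blow up $\mathbb{A}^2$ twice and contract the resulting $(-2)$-curve), so Zariski factorization does not apply to $\bar V\to X$ directly; you must first resolve, $\tilde V\to\bar V$, exactly as the paper inserts $\tilde Y\to Y$, and observe that the contracted curves of $\bar V\to X$ are birational images of those of $\tilde V\to X$. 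With that repair Step 2 is complete.

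Step 3, however, is chasing something that is neither needed nor true. The exceptional locus here has to be understood as the union of the curves contracted to points: that is all the paper's own proof establishes (the relative MMP over $X$ only produces curves lying over points of $X$), and it is all that is used in Theorem~\ref{thm:eff-surface-nonclosed-field}, where the relevant components are $\rho$-exceptional for a resolution $\rho$ of the \emph{normal} surface $Z_{\bar K}$. For components of the non-isomorphism locus that dominate a curve of $X_{\bar K}^{\mathrm{red}}$ the assertion fails: take $X=C\times_K E$ with $E$ a smooth elliptic curve over $K$ and $C$ a regular, geometrically integral, non-smooth projective curve (e.g.\ the regular projective model of $y^2=x^p-t$ over $K=k(t)$, $p$ odd). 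Then $X$ is regular and projective with $H^0(X,\mathcal{O}_X)=K$, the surface $X_{\bar K}^{\mathrm{red}}=C_{\bar K}\times E_{\bar K}$ is non-regular exactly along $\mathrm{Sing}(C_{\bar K})\times E_{\bar K}$, and the desingularization $\tilde C\times E_{\bar K}$ fails to be an isomorphism precisely along finitely many copies of the elliptic curve $E_{\bar K}$. So once the contracted components are handled you are done, and the local analysis of the non-smooth locus of $X/K$ you propose cannot succeed in general.
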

\begin{proof}
We first take a finite purely inseparable extension $K'/K$ such that the desingularization $\sigma': Y_{K'} \to X_{K'}^{\mathrm{red}}$ is smooth over $K'$. In fact we only need to prove that the exceptional locus of $\sigma'$ geometrically is a union of rational curves.
For sufficiently large $e$, the function field $K(X)$ is between $K(Y_{K'})^{p^e}$ and $K(Y_{K'})$, this gives a rational map $X \dashrightarrow Y_{K'}^{p^e}$. Let $Y$ be the normalization of $Y_{K'}^{p^e}$ in $K(X)$. Then we have a natural commutative diagram
$$\xymatrix{
&Y_{K'} \ar[r]\ar[d] &Y \ar[d]\\
&X_{K'}^{\mathrm{red}} \ar[r]  &X
}$$
such that $Y$ is normal, $Y \to X$ is a projective birational morphism and $Y_{K'} \to Y$ is finite and purely inseparable, hence is an homeomorphism. Let $\tilde{Y} \to Y$ be a regular resolution. Since $X$ is regular, a relative minimal model program starting with $\tilde{Y}$ over $X$ must end up with $X$, we see that the exceptional locus is a union of curves of arithmetic genus zero (\cite{Sh66}). This is enough to imply the statement.
\end{proof}

\subsection{The behaviour of the canonical bundle under purely inseparable base changes}\label{sec:canonical-bundle-bc}
First recall an important observation due to Tanaka \cite{Tan18}.
Let $K$ be a field of characteristic $p$ and $\bar{K}$ the algebraic closure of $K$. Let $X$ be a normal projective variety over $K$ such that $H^0(X, \mathcal{O}_X) = K$. Assume $X_{\bar{K}}$ is not reduced. By \cite[Lemma 2.3 and 2.4]{Tan18} we have
the following commutative diagram
$$\xymatrix{
&X^{(1)}\ar[rd]_{\sigma'}\ar[d]\ar[rrd]^{\sigma_1} &  & \\
&X_{K_1}\ar[r]\ar[d]  &X_{K_1'} \ar[r]^{\pi}\ar[d]  &X\ar[d] \\
&\mathrm{Spec}~K_1 \ar[r]                 &\mathrm{Spec}~K_1' \ar[r]     &\mathrm{Spec}~K
}$$
where
\begin{itemize}
\item
$K_1'/K$ is a purely inseparable extension of degree $p$ such that $X_{K_1'}$ is integral but not normal;
\item
$\sigma': X^{(1)} \to X_{K_1'}$ is the normalization map, and $K_1= H^0(X^{(1)}, \mathcal{O}_{X^{(1)}})$ is an inseparable extension over $K_1'$.
\end{itemize}
Note that $X_{K_1}$ is not reduced and the natural map $X^{(1)} \to X_{K_1}^{\mathrm{red}}$ coincides with the normalization.
Comparing the dualizing sheaves, we can write that
$$K_{X^{(1)}} \sim \sigma'^*K_{X_{K_1'}} - C \sim \sigma'^*\pi^*K_{X_{K_1'}} - C \sim \sigma_1^*K_X - C$$
with the following remarks
\begin{itemize}
\item
$X_{K_1'}$ is Gorenstein in codimension one ($G_1$) and satisfies Serre condition $S_2$, as its dualizing sheaf coincides with $\pi^*\omega_X$ and hence is reflexive, we may write it into the divisorial form $\mathcal{O}_{X_{K_1'}}(K_{X_{K_1'}})$ and also call $K_{X_{K_1'}}$ the canonical divisor\footnote{On an $S_2$ variety $X$, a rank one reflexive coherent sheaf $\mathcal{L}$ is invertible in codimension one and is isomorphic to $i_{U*}(\mathcal{L}|_U)$ where $U$ is the maximal open subset of $X$ such that $\mathcal{L}|_U$ is invertible.};
\item $C > 0$ arises from the conductor of the normalization (\cite[2.6]{Re94}).
\end{itemize}
We can repeat this process and finally get a finite sequence
$$\xymatrix{&X=X^{(0)}/K &X^{(1)}/K_1 \ar[l] &\cdots\ar[l] &X^{(n-1)}/K_{n-1} \ar[l] &X^{(n)}/K_n \ar[l]}$$
where $X^{(n)}$ is geometrically reduced. Then we can do a further finite purely inseparable field extension $L/K_n$ such that the normalization $Z_L \to (X_n)_L$ is geometrically normal over $L$. In this way, we obtain a normalization $Z_{\bar{K}} \to X_{\bar{K}}^{\mathrm{red}}$. If denoting by $\sigma: Z_{\bar{K}} \to X$ the natural morphism, there exists an effective Weil divisor $E$ on $Z_{\bar{K}}$, which arises from the conductor in the process of doing the normalization, such that
$$K_{Z_{\bar{K}}} \sim \sigma^*K_X -E.$$
Moreover by \cite[Theorem 3.16]{Tan19}), we may write that $E= (p-1)E'$ for some effective Weil divisor $E'$ on $Z_{\bar{K}}$.

\subsection{Frobenius stable sections on generic fibers and geometric fibers}\label{sec:Frob-stable-section-bc}
Though we originally aim to treat only normal varieties, non-normal varieties appear as intermediates when doing  inseparable base changes, we package the treatment in the following theorem.

\begin{thm}\label{thm:bir-geo-gen}
Let $K$ be an $F$-finite field and $\bar{K}$ the algebraic closure of $K$. Let $X$ be a normal projective variety over $K$ such that $H^0(X, \mathcal{O}_X) = K$. Let $\sigma: Z_{\bar{K}} \to X_{\bar{K}}^{\mathrm{red}}$ be the normalization map. Let $D$ be a nef and big $\mathbb{Q}$-Cartier $\mathbb{Q}$-divisor on $X$, and denote by $\bar{D}$ its pullback via the natural map $Z_{\bar{K}} \to X$. If $S_{*}^0(Z_{\bar{K}}, K_{Z_{\bar{K}}} + \bar{D})$ is birational (resp. nonzero), then so is $S_{*}^0(X, K_X + D)$, where ``$*$'' can be ``$-$'' or an effective divisor $\Delta$ on $X$ (automatically
$*= \bar{\Delta}(:=\Delta|_{Z_{\bar{K}}})$ on $Z_{\bar{K}}$).
\end{thm}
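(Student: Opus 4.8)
The plan is to \emph{descend} a Frobenius-stable section, or a Frobenius-stable adjoint linear system, from $Z_{\bar K}$ to $X$ by means of a trace map, after first replacing $\bar K$ by a finite purely inseparable extension of $K$ so that the relevant morphism becomes finite. Since $H^0(X,\mathcal{O}_X)=K$, the scheme $X_{\bar K}$ is geometrically irreducible (Section \ref{sec:bc}), so the entire discrepancy between $X$ and $Z_{\bar K}$ is produced by the purely inseparable field extensions and normalizations analysed in Section \ref{sec:canonical-bundle-bc}. Using those constructions together with the last bullet of Section \ref{sec:bc}, I would fix a finite purely inseparable extension $K'/K$, a normal projective variety $Z'$ over $K'$ which is geometrically normal over $K'$ with $Z'\times_{K'}\bar K\cong Z_{\bar K}$, and a finite morphism $\pi\colon Z'\to X$ factoring the canonical map $Z_{\bar K}\to X$, such that $K_{Z'}\sim\pi^{*}K_X-(p-1)E''$ for an effective Weil divisor $E''$ on $Z'$ restricting to $E'$ on $Z_{\bar K}$, and $\bar D=(\pi^{*}D)|_{Z_{\bar K}}$. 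Because $Z'$ is geometrically normal over $K'$, the formation of each $S^{e}$ commutes with the flat base change $\bar K/K'$; as the decreasing sequences $S^{e}\supseteq S^{e+1}\supseteq\cdots$ of finite-dimensional spaces stabilize, so does the formation of $S^{0}$, of $S^{0}_{-}$ and of $S^{0}_{\Delta}$ ($\mathbb{Q}$-Cartierness, ampleness and bigness being insensitive to field extension), and both nonvanishing and birationality of a linear system may be tested after the extension $\bar K/K'$. Hence it suffices to prove: if $S^{0}_{*}(Z',K_{Z'}+\pi^{*}D)$ is nonzero (resp.\ birational), then so is $S^{0}_{*}(X,K_X+D)$; moreover one may decompose $\pi$ into the elementary steps of Section \ref{sec:canonical-bundle-bc} and argue one step at a time.

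Next I would construct the trace map for $\pi$. Because $\pi$ is finite and both $X$ and $Z'$ are normal, $\omega_{Z'/X}=\pi^{!}\mathcal{O}_X\cong\mathcal{O}_{Z'}(-(p-1)E'')$, and the Grothendieck trace (evaluation at $1$) yields $\pi_{*}\omega_{Z'/X}\to\mathcal{O}_X$, which is surjective in codimension one: over a codimension-one point of $X$ the inclusion $\mathcal{O}_X\hookrightarrow\pi_{*}\mathcal{O}_{Z'}$ is an inclusion into a finite torsion-free extension of a discrete valuation ring, so $1$ lies in a local basis and evaluation at $1$ is onto there. Twisting by $\mathcal{O}_X(K_X+\ulcorner D\urcorner)$, taking reflexive hulls, and using that $\ulcorner\pi^{*}D\urcorner\le\pi^{*}\ulcorner D\urcorner$ in codimension one, I obtain a morphism of reflexive sheaves
$$Tr_{\pi}\colon\pi_{*}\mathcal{O}_{Z'}(K_{Z'}+\ulcorner\pi^{*}D\urcorner)\longrightarrow\mathcal{O}_X(K_X+\ulcorner D\urcorner),$$
surjective in codimension one and satisfying the projection formula $Tr_{\pi}(s\cdot\pi^{*}t)=Tr_{\pi}(s)\cdot t$. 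Since the absolute Frobenius commutes with every morphism, $Tr_{\pi}$ is compatible with the Frobenius traces $Tr^{e}$ on $Z'$ and on $X$ (the two Grothendieck traces compose), so $Tr_{\pi}$ carries $S^{e}(Z',K_{Z'}+\pi^{*}D)$ into $S^{e}(X,K_X+D)$ for every $e$, hence $S^{0}$ into $S^{0}$; pulling the auxiliary effective, ample and nef-and-big perturbations back along $\pi$, exactly as in the proof of Proposition \ref{prop:F-stable-section}(iv), upgrades this to $Tr_{\pi}\colon S^{0}_{*}(Z',K_{Z'}+\pi^{*}D)\to S^{0}_{*}(X,K_X+D)$.

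The essential point is that this map is surjective, which yields both assertions. Here one must use that the classical trace of the purely inseparable extension $K(Z')/K(X)$ vanishes, so surjectivity can only come from the identity $K_{Z'}=\pi^{*}K_X-(p-1)E''$, whose $(p-1)$-divisibility is \cite[Theorem 3.16]{Tan19}, together with Frobenius stability. I would argue as in the proof of Proposition \ref{prop:F-stable-section}(iii): since $S^{0}_{*}=S^{e}_{*}$ for $e\gg 0$ on both sides, it suffices to show that for $e\gg 0$ the $\pi$-trace at level $p^{e}D$ is surjective on global sections; this holds because, $D$ being nef and big, for $e\gg 0$ the round-up inequalities align (this is where the $(p-1)$ factor is needed) so that the corresponding sheaf-level $\pi$-trace is surjective with reflexive kernel, and the first cohomology of that kernel, twisted as above, is killed by Fujita vanishing (Theorem \ref{thm:var-fujita-vanishing}). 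Feeding this into the commutative square of the Frobenius trace and the $\pi$-trace, and running it through the ample perturbations defining $S^{0}_{*}$, gives the surjectivity of $Tr_{\pi}$. Nonvanishing on $X$ is then immediate; for birationality one reduces, as in the effectivity arguments of the paper, to a nonvanishing statement for a perturbed system separating a general pair of points on a blow-up of $X$ --- to which the trace argument applies again --- using also that $\pi$ is generically finite purely inseparable, so that birationality transports downstairs (cf.\ \cite{Ke08,Sch14}).

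I expect the third step to be the main obstacle: for a purely inseparable cover the naive trace is zero, and the content is precisely that the Grothendieck/evaluation trace --- which survives only because the conductor divisor is $(p-1)$-divisible --- induces a \emph{surjection} on Frobenius-stable sections. Propagating surjectivity from the sheaf level to global sections requires a Frobenius iteration together with Fujita vanishing, and the round-up bookkeeping needed to make the relevant divisors line up is the delicate part; by contrast the reduction in the first step and the formal construction of $Tr_{\pi}$ in the second are routine given Sections \ref{sec:bc}--\ref{sec:canonical-bundle-bc} and Proposition \ref{prop:F-stable-section}.
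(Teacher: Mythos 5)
Your setup (reduce to a finite purely inseparable extension $K'/K$ over which the normalization becomes geometrically normal, then descend sections via a trace map compatible with the Frobenius traces) is the right architecture and matches the paper's Steps 1--2 and 4. But the step you single out as essential --- surjectivity of $Tr_{\pi}\colon S^{0}_{*}(Z',K_{Z'}+\pi^{*}D)\to S^{0}_{*}(X,K_X+D)$ --- is both the wrong property and logically insufficient. To conclude that $S^{0}_{*}(X,K_X+D)$ is nonzero from the nonvanishing of the space upstairs, you need the trace map to be \emph{injective} (or at least nonzero on the given sections): a surjection from a nonzero space onto $S^{0}_{*}(X,K_X+D)$ is perfectly consistent with $S^{0}_{*}(X,K_X+D)=0$, since the map could have large kernel. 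The paper never needs surjectivity. It decomposes $Z_{\bar K}\to X$ so that the purely inseparable field extensions are handled by \emph{flat base change} --- giving literally $S^{0}_{\Delta}(X,K_X+D)\otimes_K K'$ on the target, with no trace taken along the field extension --- and the only traces taken are along the normalization maps $X^{(i)}\to X_{K_i'}$, which are finite and \emph{birational}. For a birational finite morphism the trace is generically an isomorphism, hence injective on global sections of reflexive sheaves, and moreover it is the identity on function fields, so the image sub-linear-system defines the same rational map as the system upstairs; this is exactly what makes both the nonvanishing and the birationality transfer.

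Your decision to composite the base change and the normalization into a single finite morphism $\pi$ with purely inseparable generic fibre is what manufactures the false difficulty about the vanishing field trace, and it also breaks the birationality transfer: for a non-birational $\pi$ the trace $K(Z')\to K(X)$ is a $K(X)$-linear projection, not multiplicative, so ratios of sections are not preserved and the system $Tr_{\pi}(S)$ on $X$ need not define the rational map you started with on $Z'$. Consequently the appeal to Fujita vanishing and to the $(p-1)$-divisibility of the conductor (\cite[Theorem 3.16]{Tan19}) in your third step is not only unproved but beside the point --- the paper uses $K_{Z_{\bar K}}\sim\sigma^{*}K_X-(p-1)E'$ elsewhere (Theorem \ref{thm:eff-surface-nonclosed-field}), not in the proof of Theorem \ref{thm:bir-geo-gen}. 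To repair your argument, keep the base-change and normalization steps separate as in the paper, observe that the trace along each normalization is injective and compatible with the identification of function fields, and then descend nonvanishing and birationality through the purely inseparable homeomorphisms $X_{K_i'}\to X$ at the level of the $K$-linear system and its base changes.
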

\begin{proof}
In the following we only consider the case ``$*$''$=$``$-$'' and divide the proof into four steps. If ``$*$'' is taken as a fixed effective divisor we only need Step 1, 3 and 4.
\smallskip

\textbf{Step 1:} Let $K'/K$ be a field extension where $K'$ is also $F$-finite and let $\sigma': X' \to X_{K'}^{\mathrm{red}}$ denote the normalization morphism. For an effective $\mathbb{Q}$-divisor $\Delta$ on $X$, the trace map of $\sigma'$ induces a natural $K'$-linear map $\eta_{K', \Delta}: S_{\Delta'}^0(X', K_{X'} + D') \to S_{\Delta}^0(X, K_X + D)\otimes_K K'$ where $D' = D|_{X'}, \Delta'=\Delta|_{X'}$.

Proof of Step 1.
We consider the following commutative diagram
$$\xymatrix{
&(X')^e\ar[r]^>>>>{\sigma'_e}\ar[d]_{F^e_{X'}} &X^e_{K'}=X^e\otimes_K K' \ar[r]^<<<<<{\pi_e}\ar[d]^{\eta_e}  &X^e\ar[d]^{F_X^e} \\
& X'\ar[r]^>>>>>>{\sigma'}                                        &X_{K'}=X\otimes_K K' \ar[r]^<<<<<{\pi}      &X
}.$$
Here we remark that both $X_{K'}$ and $X^e_{K'}$ are $G_1$ and $S_2$, and the dualizing sheaves $\mathcal{O}_{X_{K'}}(K_{X_{K'}})$ and $\mathcal{O}_{X^e_{K'}}(K_{X^e_{K'}})$ coincide with the pullback of the dualizing sheaves of $X$ and $X^e$ respectively. First, since $\pi$ is a flat base change, we have the trace map
{\small \begin{align*}
& \eta_{e*}[\mathcal{O}_{X^e_{K'}}(K_{X^e_{K'}} + \pi^{*}_e\ulcorner p^e(D-\Delta)\urcorner)(: =\pi^{*}_e\mathcal{O}_{X^e}(K_{X^e} + \ulcorner p^e(D-\Delta)\urcorner))] \\
&\cong \pi^*F_{X*}^e\mathcal{O}_{X^e}(K_{X^e} + \ulcorner p^e(D-\Delta)\urcorner) \xrightarrow{\pi^*Tr_X^e} \mathcal{O}_{X_{K'}}(K_{X_{K'}} + \pi^*\ulcorner(D-\Delta)\urcorner)(:= \pi^*\mathcal{O}_{X}(K_{X} + \ulcorner (D-\Delta)\urcorner)).
\end{align*}}
For $e\gg0$ the image of the global sections of the above map coincides with $S^0(X, K_X + (D-\Delta))\otimes_K K'$. Second, by $\sigma'^{*}_e\pi^{*}_e\ulcorner p^e(D-\Delta)\urcorner \geq \ulcorner p^e(D'-\Delta')\urcorner$, the trace map
$$
Tr_{\sigma'_e}:~ \sigma'_{e*}(\mathcal{O}_{X'}(K_{X'} + \ulcorner p^e(D'-\Delta')\urcorner))
\to \mathcal{O}_{X^e_{K'}}(K_{X^e_{K'}} + \pi^{*}_e\ulcorner p^e(D-\Delta)\urcorner)
$$
makes sense in codimension one, and it in fact can extend to the whole variety because $\mathcal{O}_{X^e_{K'}}(K_{X^e_{K'}} + \pi^{*}_e\ulcorner p^e(D-\Delta)\urcorner)$ is reflexive. In summary, we obtain the following commutative diagram
$$\xymatrix{
&\eta_{e*}\sigma'_{e*}\mathcal{O}_{X'}(K_{X'} + \ulcorner\ p^e(D'-\Delta')\urcorner) \ar[r]^{Tr_{\sigma'_e}}\ar[d]_{Tr^e_{X'}} &\eta_{e*}\mathcal{O}_{X^e_{K'}}(K_{X^e_{K'}} + \pi^{e*}\ulcorner\ p^e(D-\Delta)\urcorner) \ar[d]_{\pi^*Tr^e_{X}}  \\
&\sigma'_* \mathcal{O}_{X'}(K_{X'} + \ulcorner(D'-\Delta')\urcorner)\ar[r]^{Tr_{\sigma'}}                 &\pi^*\mathcal{O}_{X}(K_{X} + \ulcorner (D-\Delta)\urcorner)~.
}$$
For $e \gg 0$, the map between the images of the global sections of the vertical trace maps is nothing but the desired map $\eta_{K', \Delta}: S_{\Delta'}^0(X', K_{X'} + D') \to S_{\Delta}^0(X, K_X + D)\otimes_K K'$.
\smallskip

\textbf{Step 2:} The trace map $Tr_{\sigma}$ induces the $\bar{K}$-linear map $\eta: S_{-}^0(Z_{\bar{K}}, K_{Z_{\bar{K}}} + D) \to  S_{-}^0(X, K_{X} + D)\otimes_K \bar{K}$.

Proof of Step 2.
Note that $\sigma^*\Theta_{D}^{\mathrm{amp}} \subseteq \Theta_{\sigma^*D}^{\mathrm{amp}}$. Then the desired map is obtained as follows
\begin{align*}
\eta: S_{-}^0(Z_{\bar{K}}, K_{Z_{\bar{K}}} + \bar{D}) &\subseteq \bigcap_{\Delta \in \Theta_{D}^{\mathrm{amp}}}~(\bigcup_{t\in \mathbb{Q}^+}S_{t\sigma^*\Delta}^0(Z_{\bar{K}}, K_{Z_{\bar{K}}} + \bar{D}))\\
 & \xrightarrow{\bigcap_{\Delta \in \Theta_{D}^{\mathrm{amp}}}~(\bigcup_{t\in \mathbb{Q}^+}\eta_{\bar{K}, t\Delta})} \bigcap_{\Delta \in \Theta_{D}^{\mathrm{amp}}}~(\bigcup_{t\in \mathbb{Q}^+}S_{t\Delta}^0(X, K_X + D))\otimes_K\bar{K} \\
 & = S_{-}^0(X, K_{X} + D)\otimes_K \bar{K}
\end{align*}
where the map appearing in the second row is from Step 1.

\smallskip

\textbf{Step 3:} We assume that $X$ is geometrically reduced and prove the theorem.

Proof of Step 3.
Since $X_{\bar{K}}$ is reduced, thus $Z_{\bar{K}} \to X_{\bar{K}}$ is a birational map. By Step 2 we have a natural inclusion
$$S_{-}^0(Z_{\bar{K}}, K_{Z_{\bar{K}}} + D) \hookrightarrow S_{-}^0(X, K_{X} + D)\otimes_K \bar{K}.$$
Since $S_{-}^0(Z_{\bar{K}}, K_{Z_{\bar{K}}} + D)$ induces a birational map of $Z_{\bar{K}}$ (resp. is nonzero), $S_{-}^0(X, K_{X} + D)\otimes_K \bar{K}$ induces a birational map of $X_{\bar{K}}$ (resp. is nonzero) too. Then we conclude that $S_{-}^0(X, K_X + D)$ is birational (resp. is nonzero).
\smallskip

\textbf{Step 4:} We assume that $X$ is not geometrically reduced and prove the theorem.

Proof of Step 4.
Recall the following commutative diagram from Section \ref{sec:canonical-bundle-bc}
$$\xymatrix{
&X^{(1)}\ar[rd]\ar[d] &  & \\
&X_{K_1}\ar[r]\ar[d]  &X_{K_1'} \ar[r]\ar[d]  &X\ar[d] \\
&\mathrm{Spec}~K_1 \ar[r]                 &\mathrm{Spec}~K_1' \ar[r]     &\mathrm{Spec}~K.
}$$
As in the previous case the trace map induces an inclusion of $K_1'$-linear spaces
$$S_{-}^0(X^{(1)}, K_{X^{(1)}} + D)\hookrightarrow S_{-}^0(X, K_{X} + D)\otimes_K K'_1.$$
If the  $K_1$-linear space $S_{-}^0(X^{(1)}, K_{X^{(1)}} + D)$ defines a birational map of $X^{(1)}$ (resp. is nonzero), then it also defines a birational map of $X_{K_1'}$ as $K_1'$-linear space, in turn we can prove that $S_{-}^0(X, K_{X} + D)$ is birational (resp. is nonzero).

If $X^{(1)}$ is already geometrically reduced, then the normalization of $(X^{(1)}_{\bar{K}})^{\mathrm{red}}$ coincides with $Z_{\bar{K}}$, in turn we conclude the theorem by combining the result of Step 3 with the assertion proved in the previous paragraph.
Otherwise, we can repeat this process and finally get a chain of purely inseparable maps
$$\xymatrix{&X=X^{(0)}/K &X^{(1)}/K_1 \ar[l] &\cdots\ar[l] &X^{(n)}/K_n \ar[l]}$$
such that $X^{(n)}$ is geometrically reduced. We can complete the proof by doing induction.
\end{proof}

To compare the behaviours of Frobenius stable adjoint linear systems on the general fibers and the generic fiber, we have the following result.
\begin{cor}\label{cor:bir-general-fiber}
Let $f: X \to Y$ be a fibration of normal varieties over an algebraically closed, uncountable field $k$ of characteristic $p$. Let $D$ be a nef and big $\mathbb{Q}$-Cartier $\mathbb{Q}$-divisor on $X$. Let $F$ be a general fiber and denote by $G \to F^{\mathrm{red}}$ the normalization.
Assume that $S_{-}^0(G, K_G+ D|_G)$ is birational (resp. non-zero). Then $S_{-}^0(X_{\eta}, K_{X_{\eta}}+ D|_{X_{\eta}})$ is birational (resp. non-zero).
\end{cor}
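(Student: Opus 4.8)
The plan is to reduce the statement, by Theorem~\ref{thm:bir-geo-gen}, to the corresponding statement for the normalized geometric generic fibre, and then to prove the latter by spreading the geometric generic fibre out into a family over a variety dominating $Y$ and specializing to general closed members; the uncountability of $k$ enters in handling the word ``general''. Set $K:=k(Y)$. Since $k$ is algebraically closed, hence perfect, and $K$ is finitely generated over $k$, the field $K$ is $F$-finite, and $X_{\eta}$ is a normal projective variety over $K$ with $H^{0}(X_{\eta},\mathcal{O}_{X_{\eta}})=K$ because $f$ is a fibration. Writing $D\sim_{\mathbb{Q}}A+E$ with $A$ ample and $E$ effective and restricting, $D|_{X_{\eta}}$ is nef and big. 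Applying Theorem~\ref{thm:bir-geo-gen} to $(X_{\eta},D|_{X_{\eta}})$ over $K$, it suffices to prove that $S^{0}_{-}(Z_{\bar K},K_{Z_{\bar K}}+\bar D)$ is birational (resp.\ non-zero), where $Z_{\bar K}\to(X_{\bar\eta})^{\mathrm{red}}$ is the normalization and $\bar D$ the pullback of $D$; here $\bar D$ is again nef and big, being the pullback of a nef and big $\mathbb{Q}$-divisor along a finite surjection.

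Next I would spread this out. The variety $Z_{\bar K}$, the divisor $\bar D$, and the finite chain of purely inseparable base changes and normalizations used in Section~\ref{sec:canonical-bundle-bc} to build $Z_{\bar K}$ from $(X_{\bar\eta})^{\mathrm{red}}$ are all of finite presentation over $k$, hence defined over a finite extension $K'/K$; let $\pi\colon Y'\to Y$ be the normalization of $Y$ in $K'$, a finite surjection. Over a dense open $V\subseteq Y'$ one then has a projective morphism $g\colon\mathcal{Z}\to V$ with $\mathcal{Z}$ normal and $g$ having geometrically normal (hence geometrically integral) fibres, a $\mathbb{Q}$-Cartier $\mathbb{Q}$-divisor $\mathcal{D}$ on $\mathcal{Z}$ which — after shrinking $V$ — is $g$-nef and $g$-big, and a $V$-morphism $\mathcal{Z}\to X\times_{Y}V$ that is, generically over $V$, the normalization of $(X\times_{Y}V)^{\mathrm{red}}$, so that the geometric generic fibre of $(\mathcal{Z},\mathcal{D})$ over $V$ is $(Z_{\bar K},\bar D)$. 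Since normalization and passage to the reduced structure commute with the general member of the families $\mathcal{Z}\to V$ and $X\times_{Y}V\to V$, for a general closed point $v\in V$ (so $\kappa(v)=k$) the fibre $\mathcal{Z}_{v}$ is the normalization $G$ of $(X_{\pi(v)})^{\mathrm{red}}$ with $\mathcal{D}_{v}=D|_{G}$. Thus the hypothesis gives that $S^{0}_{-}(\mathcal{Z}_{v},K_{\mathcal{Z}_{v}}+\mathcal{D}_{v})$ is birational (resp.\ non-zero) for a very general closed point $v\in V$.

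Finally I would propagate this to the geometric generic point. Carrying out the Fujita-vanishing argument of Proposition~\ref{prop:F-stable-section}(iii) relatively over $V$ — using Theorem~\ref{thm:var-fujita-vanishing} in place of Fujita vanishing on a single variety, and, if necessary, first replacing $\mathcal{Z}$ by a resolution of its fibres (which have dimension at most $\dim X-\dim Y$) — one obtains, after shrinking $V$, a single Frobenius exponent $e_{0}$ and a single auxiliary ``ample difference'' divisor that work on every fibre simultaneously, so that $S^{0}_{-}(\mathcal{Z}_{v},K_{\mathcal{Z}_{v}}+\mathcal{D}_{v})$ is the image of one trace map and this image is a locally free subsheaf $\mathcal{S}\subseteq g_{*}\mathcal{O}_{\mathcal{Z}}(K_{\mathcal{Z}/V}+\cdots)$ whose formation commutes with restriction to each fibre. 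Then ``$S^{0}_{-}(\mathcal{Z}_{v},K_{\mathcal{Z}_{v}}+\mathcal{D}_{v})$ is birational (resp.\ non-zero)'' is a constructible condition on $v\in V$, and, $k$ being uncountable, a constructible subset of $V$ containing all very general closed points contains a dense open, hence the generic point; applying this at the geometric generic point, whose fibre is $(Z_{\bar K},\bar D)$, we conclude $S^{0}_{-}(Z_{\bar K},\bar D)$ is birational (resp.\ non-zero), which with the first step finishes the proof.

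The step I expect to be the main obstacle is this last paragraph: making $S^{0}_{-}$ ``spread out'' as a coherent subsheaf of $g_{*}\mathcal{O}_{\mathcal{Z}}(K_{\mathcal{Z}/V}+\cdots)$ over $V$, i.e.\ producing a single Frobenius exponent and a single auxiliary divisor uniform across the whole family — in particular accommodating that Proposition~\ref{prop:F-stable-section}(iii) is stated for regular varieties while the fibres $\mathcal{Z}_{v}$ are merely normal (whence the fibrewise resolution). Once this compatibility is in place, the passages ``general fibre $\rightarrow$ geometric generic fibre'' and ``geometric generic fibre $\rightarrow$ generic fibre'' are routine semicontinuity, constructibility, and the flat-base-change bookkeeping already contained in Theorem~\ref{thm:bir-geo-gen}.
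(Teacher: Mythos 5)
Your overall architecture matches the paper's: reduce to the normalized geometric generic fibre via Theorem~\ref{thm:bir-geo-gen}, spread out over a purely inseparable cover of $Y$ so that general closed fibres of the new family are the normalizations $G$, and then pass from (very) general closed points to the generic point using uncountability of $k$. The gap is in your third paragraph, and it is not just a deferred technicality: you ask for a \emph{single} Frobenius exponent $e_0$ and a \emph{single} auxiliary divisor such that $S^0_{-}(\mathcal{Z}_v, K_{\mathcal{Z}_v}+\mathcal{D}_v)$ \emph{equals} the fibre of one trace-image sheaf for every closed $v$ in a dense open. This uniformity is exactly what fails: the exponent at which $S^e$ stabilizes to $S^0$, and the threshold $t$ for which $S^0_{t\Delta}=S^0_{-}$, genuinely depend on the fibre, and relative Fujita vanishing plus base change only controls these one value of $(t,e)$ at a time on a Zariski-open locus depending on $(t,e)$. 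Your fallback of ``replacing $\mathcal{Z}$ by a resolution of its fibres'' so as to invoke Proposition~\ref{prop:F-stable-section}(iii) does not repair this, since one cannot resolve all fibres simultaneously in a family, and the paper in fact only needs part (ii) of that proposition (valid for normal varieties) applied to the geometric generic fibre to produce the subvariety $\bar T$ that is then spread out.

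The paper's actual mechanism avoids the uniformity you posit. It fixes $\Delta$ supported on the spread-out $T$ with $D-\Delta$ relatively ample, takes the countable family $t_n=1/n$, and for each pair $(n,e)$ with $e\geq e_n$ gets an open $U_{n,e}\subseteq Y'$ over which the sheaf $S^{e}_{t_n\Delta}g_*\mathcal{O}_Z(K_{Z/Y'}+D)$ is locally free and commutes with restriction to closed fibres. Uncountability of $k$ is used precisely to guarantee that $\Xi=\bigcap_{n,e}U_{n,e}$ is still dense. At $y'\in\Xi$ one only obtains the one-sided inclusion $S^{e}_{t_n\Delta}(Z_{y'},\cdot)\supseteq S^0_{-}(Z_{y'},\cdot)$, while at the geometric generic point one has equality with $S^0_{-}(Z_{\bar\eta},\cdot)$; the inclusion suffices because a linear system containing a birational (resp.\ nonzero) one is birational (resp.\ nonzero), and then the relative $\mathrm{Proj}$ of the single sheaf transports birationality from a dense set of closed fibres to the generic fibre. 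So the uncountability hypothesis is doing more work than your ``constructible set containing very general points contains a dense open'' step suggests, and your proof as written does not close without replacing the claimed uniform stabilization by this countable-intersection argument with a one-sided containment.
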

\begin{proof}
By Theorem \ref{thm:bir-geo-gen} we only need to prove the analogous assertions for the geometric generic fiber. To relate the general fiber and the geometric generic fiber, we shall use the trace maps of the relative Frobenius maps.

We may assume $f$ is flat and $Y$ is regular. There exists a quasi-finite, flat, purely inseparable base change $Y' \to Y$ such that, if $Z$ denotes the normalization of $(X':=X\times_Y Y')^{\mathrm{red}}$, the geometric generic fiber $Z_{\bar{\eta}}$ of the fibration $g: Z \to Y'$ is normal. We can shrink $Y'$ to assume $g$ is flat and $Y'$ is regular, and if $F = X_y$ for some closed point $y \in Y$ and $y' \in Y'$ is the point over $y$ then $G \cong Z_{y'}$.  By Theorem \ref{thm:bir-geo-gen} we only need to prove the assertion for the firation $g: Z \to Y'$. We consider the following commutative diagram
$$\xymatrix@C=2cm{&Z=Z^e \ar[r]^{F_{Z/Y'}^e}\ar[d]^{g^e}\ar@/^2pc/[rr]|{F_Z^e} &Z_{Y'^e}=Z\times_{Y'} Y'^e \ar[r]^{\pi^e}\ar[d]^{g_e'}  & Z\ar[d]^g \\
&Y'\ar@{=}[r]   &Y'^e\ar[r]^{F_{Y'}^e}  &Y'
}.$$
Fix an effective divisor $\Delta$ on $Z$ and consider the trace map of the relative Frobenius map $F_{Z/Y'}^e$
{\small \begin{align*}
Tr_{Z/Y'}^e:~ & g^e_*\mathcal{O}_{Z^e}(K_{Z^e/Y'^e} + \ulcorner p^e(D-\Delta)\urcorner|_{Z^e}) \\
&\to g'_{e*}(\pi^{e*}\mathcal{O}_Z(K_{Z/Y} + \ulcorner D\urcorner|_Z)) \cong F_{Y'}^{e*}g_*\mathcal{O}_Z(K_{Z/Y'} + \ulcorner D\urcorner|_Z)
\end{align*}}
where the isomorphism holds because $F_{Y'}^e$ is flat. We denote  the image of the above trace map by $S_{\Delta}^eg_*\mathcal{O}_Z(K_{Z/Y'} + D) \subseteq F_Y^{e*}g_*\mathcal{O}_Z(K_{Z/Y'} + \ulcorner D-\Delta\urcorner)$.

Applying Propostion \ref{prop:F-stable-section} (ii), we can find a reduced subvariety $\bar{T}$ on $Z_{\bar{\eta}}$ such that $\mathrm{Supp}~D_{\bar{\eta}} \subseteq \bar{T}$ and for any $\bar{\Delta} \in \Theta_{D_{\bar{\eta}}}^{\mathrm{amp}}$, if $\mathrm{Supp}~\bar{\Delta} = \bar{T}$ then
$$S_{\bar{\Delta}}^0(Z_{\bar{\eta}}, (K_{Z/Y'} + D)|_{Z_{\bar{\eta}}}) \subseteq S_{-}^0(Z_{\bar{\eta}}, (K_{Z/Y'} + D)|_{Z_{\bar{\eta}}}).$$
We can do some further quasi-finite flat base changes to assume there exists a subvariety $T \subset Z$ such that $T_{\bar{\eta}} = \bar{T}$, and that each irreducible component of $T$ is flat and separable over $Y'$, hence its restriction on $Z_{\bar{\eta}}$ is still reduced. As a consequence, if $B$ is a divisor supported on $T$ then
$$\ulcorner B\urcorner|_{Z_{\bar{\eta}}}= \ulcorner B|_{Z_{\bar{\eta}}}\urcorner~ \mathrm{and}~\ulcorner B\urcorner|_{G} = \ulcorner B|_{G}\urcorner.$$

From now on we fix an effective $\mathbb{Q}$-Cartier  $\mathbb{Q}$-divisor $\Delta$ on $Z$ such that $\mathrm{Supp}~\Delta=T$ and that $D|_Z - \Delta$ is ample over $Y'$. By shrinking $Y'$ again, we may assume  $\mathrm{Supp}~D \subseteq T$ and for any geometric point $\zeta \in Y'$ and natural number $e$
$$\ulcorner p^e(D-\Delta) \urcorner|_{Z_{\zeta}} = \ulcorner p^e(D-\Delta)|_{Z_{\zeta}} \urcorner,$$
and in turn we obtain a natural $k(\zeta)$-linear map
$$S_{\Delta}^eg_*\mathcal{O}_Z(K_{Z/Y'} + D) \otimes_{\mathcal{O}_{Y'}^{\frac{1}{p^e}}} k(\zeta)^{\frac{1}{p^e}} \to S_{\Delta}^e(Z_{\zeta}, (K_{Z/Y'} + D)|_{Z_{\zeta}}).$$
As $k(\zeta)$ is algebraically closed, we may omit the Frobenius twist if no confusion occurs.
Let $t_n= \frac{1}{n}$. By replacing $\Delta$ with a small multiple, we may assume that for all $n$
$$S_{-}^0(Z_{\bar{\eta}}, (K_{Z/Y'} + D)|_{Z_{\bar{\eta}}}) = S_{t_n\Delta}^0(Z_{\bar{\eta}}, (K_{Z/Y'} + D)|_{Z_{\bar{\eta}}}).$$
Then for each $n$ there exists a positive integer $e_n$ such that for any $e\geq e_n$,
{\small \begin{align}\label{eq:fs-geo-gen}S_{t_n\Delta}^eg_*\mathcal{O}_Z(K_{Z/Y'} + D) \otimes k(\bar{\eta}) \cong S_{t_n\Delta}^e(Z_{\bar{\eta}}, (K_{Z/Y'} + D)|_{Z_{\bar{\eta}}}) = S_{-}^0(Z_{\bar{\eta}}, (K_{Z/Y'} + D)|_{Z_{\bar{\eta}}}).\end{align}}
For each $n$ and $e \geq e_n$, there exists a nonempty open subset $U_{n,e} \subseteq Y'$ such that $S_{t_n\Delta}^eg_*\mathcal{O}_Z(K_{Z/Y'} + D)|_{U_{n,e}}$ is locally free of rank $\dim_{k(\bar{\eta})} S_{-}^0(Z_{\bar{\eta}}, (K_{Z/Y'} + D)|_{Z_{\bar{\eta}}}) $ and for any closed point $y' \in U_{n,e}$
{\small\begin{align}\label{eq:fs-gen}S_{t_n\Delta}^eg_*\mathcal{O}_Z(K_{Z/Y'} + D)\otimes k(y') \cong S_{t_n\Delta}^e(Z_{y'}, (K_{Z/Y'} + D)|_{Z_{y'}}).\end{align}}
Therefore, for each $y' \in \Xi = \bigcap_n \bigcap_e U_{n,e}$, $\dim_{k(y')}S_{t_n\Delta}^e(Z_{y'}, (K_{Z/Y'} + D)|_{Z_{y'}})$ is independent of $n$ and sufficiently large $e$, hence
$$S_{t_n\Delta}^e(Z_{y'}, (K_{Z/Y'} + D)|_{Z_{y'}}) \cong S_{t_n\Delta}^0(Z_{y'}, (K_{Z/Y'} + D)|_{Z_{y'}}) \supseteqq S_{-}(Z_{y'}, (K_{Z/Y'} + D)|_{Z_{y'}}).$$
Since $k$ is uncountable, the locus $\Xi$ is dense in $Y'$.

If for general $y' \in Y'$, $S_{-}(Z_{y'}, (K_{Z/Y'} + D)|_{Z_{y'}})$ is birational, then there is a dense subset $\Xi' \subseteq \Xi$ such that for any $y' \in \Xi'$ and $e\in \mathbb{N}$, $S_{t_n\Delta}^e(Z_{y'}, (K_{Z/Y'} + D)|_{Z_{y'}})$ is birational. From this we conclude by (\ref{eq:fs-gen}) that for the relative map
{\small $$Z/Y' \dashrightarrow \mathrm{Proj}_{\mathcal{O}_{Y'}}(\bigoplus_l \mathrm{Sym}^l(S_{\Delta}^eg_*\mathcal{O}_Z(K_{Z/Y'} + D))$$}
is a birational map, which, by (\ref{eq:fs-geo-gen}), implies that $S_{-}^0(Z_{\bar{\eta}}, (K_{Z/Y'} + D)|_{Z_{\bar{\eta}}})$ is birational.

The nonvanishing assertion follows similarly.
\end{proof}



\subsection{Birational Criterion}
The following birational criterion is well known to experts, but here we provide a detailed proof for the conveniences of the reader.
\begin{thm}\label{thm:bir}
Let $f: X \to Y$ be a dominant morphisms of integral varieties over an algebraically closed field $k$, with integral generic fiber $X_{\eta}$. Let $D$ be a Weil divisor on $X$, and let $V \subseteq H^0(X, D)$ be a finite dimensional linear subspace, which is also regarded as a subspace of $H^0(Y, f_*\mathcal{O}_X(D))$. Let $\mathcal{V} \subseteq f_*\mathcal{O}_X(D)$ be the subsheaf generated by $V$. Assume that

(i) $|V||_{X_{\eta}}$ defines a birational map of $X_{\eta}$, \\
and one of the following conditions holds

(ii) there is an open dense subset $Y' \subseteq Y$ such that for any $y \in Y'$, $\mathcal{V}\otimes \mathcal{I}_y$ is globally generated on $Y'$ by
$V_y:=V \bigcap H^0(Y, \mathcal{V}\otimes \mathcal{I}_y)$;

(ii') there exists a linear system $|H|$ on $Y$ and an effective Weil divisor $E$ on $X$ such that $|H|$ defines a birational map of $Y$ and $f^*|H|+ E \subseteq |V|$.\\
Then $|V|$ defines a birational map of $X$.
\end{thm}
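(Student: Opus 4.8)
The plan is to reduce the two sub-cases to a single geometric mechanism: a birational map on the total space is detected by separating points of a dense open set and distinguishing points over a fixed general point of the base. Since condition (i) handles the ``horizontal'' separation on the generic fiber, the work is to propagate this to nearby closed fibers and to handle the ``vertical'' direction using either (ii) or (ii').

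First I would set up the standard criterion: the rational map $\varphi_{|V|}: X \dashrightarrow \mathbb{P}(V)$ is birational onto its image if and only if there is a dense open $U \subseteq X$ such that $\varphi_{|V|}|_U$ is injective and unramified (or, more cheaply, such that $\varphi_{|V|}$ separates two general points of $X$; I would use the version where birationality follows once $|V|$ separates two general points lying in the same fiber \emph{and} two general points lying in different fibers). Spreading out, assumption (i) gives a dense open $Y_1 \subseteq Y$ so that for every $y \in Y_1$ the restricted system $|V_y|$ (or the image of $V$ in $H^0(X_y, D|_{X_y})$) is birational on $X_y$; this takes care of separating two general points of $X$ that lie in a common fiber. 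So the remaining issue is: given two general points $x_1, x_2$ of $X$ with $f(x_1) \neq f(x_2)$, produce a section in $V$ vanishing at $x_1$ but not at $x_2$.

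Under hypothesis (ii), I would argue directly on $Y$: for general $y_1 = f(x_1)$, the sheaf $\mathcal{V} \otimes \mathcal{I}_{y_1}$ is globally generated near $y_2 = f(x_2) \in Y'$ by sections in $V_{y_1}$, so there is $s \in V$ with $s(y_1) = 0$ but $s(y_2) \neq 0$ as a section of $f_*\mathcal{O}_X(D)$; pulling back, $s$ vanishes on the whole fiber $X_{y_1}$ hence at $x_1$, but is nonzero at the generic point of $X_{y_2}$, hence (for general $x_2$) nonzero at $x_2$. Combined with the fiberwise birationality from (i), this separates general pairs of points and gives injectivity of $\varphi_{|V|}$ on a dense open; one then upgrades to birationality by the usual argument (the image has the same dimension and the map is generically injective, so it is birational onto its image). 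Under hypothesis (ii'), the argument is even more transparent: the subsystem $f^*|H| + E \subseteq |V|$ already maps $X$ onto (a cone-like thing over) the image of $\varphi_{|H|}$, which is birational on $Y$; so $\varphi_{|V|}$ composed with the projection recovers $f$ up to birational equivalence, meaning $\varphi_{|V|}$ separates points in different fibers, and again (i) separates points in the same fiber. I would phrase this by factoring: the inclusion $f^*|H|+E\subseteq |V|$ shows $\varphi_{|V|}$ dominates $\varphi_{|H|}\circ f$ birationally, so two general points with distinct images under $f$ already have distinct images under $\varphi_{|V|}$.

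The main obstacle I anticipate is the bookkeeping in case (ii): translating ``$\mathcal{V}\otimes\mathcal{I}_y$ is globally generated on $Y'$ by $V_y$'' into an actual separation statement for \emph{closed points} $x_1, x_2$ on $X$, since global generation on an open subset of $Y$ only controls the behavior over $Y'$ and one must be careful that the chosen section does not vanish identically on $X_{y_2}$ — this is exactly where one uses that $V_{y_1}$ generates $\mathcal{V}\otimes \mathcal{I}_{y_1}$ rather than a smaller subsheaf, so the section survives at the generic point of the target fiber. A secondary technical point is ensuring the two general points can be taken in distinct fibers lying in the good locus $Y_1 \cap Y'$ simultaneously, which follows because $f$ is dominant so the preimage of a dense open in $Y$ is dense in $X$. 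I do not expect (ii') to pose any real difficulty beyond writing the factorization cleanly.
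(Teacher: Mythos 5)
Your argument has a genuine gap, and it sits exactly where the characteristic of $k$ matters. You reduce everything to separating pairs of general closed points and then invoke ``the map is generically injective, so it is birational onto its image.'' That implication is false in positive characteristic, which is the setting of this paper: the Frobenius $\mathbb{P}^1\to\mathbb{P}^1$, $[x:y]\mapsto[x^p:y^p]$, is given by a linear system, is bijective on closed points, and has degree $p$. A concrete counterexample to your scheme of proof: take $X=\mathbb{A}^2\to Y=\mathbb{A}^1$ the first projection (coordinates $t,u$) and $V=\mathrm{Span}_k\{1,t^p,u\}$. Condition (i) holds ($u\mapsto[1:t^p:u]$ is an immersion of the generic fiber), the sections separate points of $Y$ (via $(t-a)^p=t^p-a^p$), and $\varphi_{|V|}(t,u)=(t^p,u)$ is injective on closed points — yet $K(Z)=k(t^p,u)\subsetneq k(t,u)$, so $|V|$ is not birational. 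What saves the theorem is that this example violates (ii): $V_y=k\cdot(t-a)^p$ generates only $(t-a)^p\mathcal{O}_Y\oplus 0$, not $\mathcal{I}_y\cdot\mathcal{V}$. In other words, hypothesis (ii) is a statement about generating the ideal sheaf $\mathcal{I}_y\cdot\mathcal{V}$ as an $\mathcal{O}_Y$-module, and your proof only extracts from it the much weaker fact that some section vanishes at $y_1$ and not at $y_2$. The ``main obstacle'' you flag (whether the section survives at the generic point of the target fiber) is not the real difficulty; the real difficulty is ruling out inseparable degree in the base direction.

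The paper's proof avoids this entirely by never arguing with point separation: it works on affine charts, uses (i) to write $B=A[g_1,\dots,g_r]_g$ after shrinking (so the restricted sections generate the ideal of a point \emph{in the fiber}, scheme-theoretically), uses (ii) to see that $V_y$ generates the ideal $I_y\cdot B$, and splices these via $0\to\mathcal{I}_y\cdot\mathcal{O}_{X',x}\to\mathcal{I}_{X',x}\to\mathcal{I}_{X'_y,x}\to0$ to conclude that $\mathcal{I}_{X',x}$ is generated by $V\cap I_x$ for every closed point $x$ of a dense open $X'$; this gives an isomorphism of $X'$ onto its image, hence birationality. To repair your proof you would need to replace ``separates two general points'' by ``generates the maximal ideal of a general point'' (equivalently, injective \emph{and} unramified on a dense open, the alternative you mention but then discard), and correspondingly use the full module-theoretic content of (ii) rather than only its evaluation at closed points. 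Your treatment of case (ii$'$) via the factorization $K(Y)\subseteq K(Z)$ and $K(Y)\cdot K(Z)=K(X)$ can be made rigorous field-theoretically and is fine, but as written you again route it through point separation, so the same objection applies to the write-up.
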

\begin{proof}

We first show the theorem under the assumptions (i) and (ii).
We may assume $Y' = \mathrm{Spec} A$ is affine and restrict ourselves on a nonempty open affine subset $X' = \mathrm{Spec} B$ with $f(X') \subseteq Y'$. By shrinking $X'$ and choosing a local generator of $\mathcal{O}_X(D)|_{X'}$, we may assume $V|_{X'} = \mathrm{Span}_k\{1, g_1, \cdots, g_r\}$ where $g_1, \cdots, g_r \in B$. The condition (i) means $K(A)(g_1, \cdots, g_r) = K(B)$. By shrinking $X'$ again we may assume
\begin{itemize}
\item
$B= A[g_1, \cdots, g_r]_g$ for some $g \in A[g_1, \cdots, g_r]$.
\end{itemize}
It follows that for $x \in X'$ and $y=f(x) \in Y'$,
\begin{itemize}
\item[(a)]
$X'_y \cong \mathrm{Spec}~A[g_1, \cdots, g_r]_g\otimes_Ak(y) \cong \mathrm{Spec}~k[\bar{g}_1, \cdots, \bar{g}_r]_{\bar{g}}$, thus the ideal sheaf $\mathcal{I}_{X'_y,x}$ is globally generated by $\bar{V} (:= \mathrm{Span}_k\{\bar{g}_1, \cdots, \bar{g}_r\}) \cap \frac{I_x}{I_y\cdot B}$.
\end{itemize}
Define the $A$-module $\mathcal{W} = \mathrm{Span}_A\{1, g_1, \cdots, g_r\} \subset B$, which generate a coherent sheaf $\widetilde{\mathcal{W}}$ on $Y'$ such that $\mathcal{V}|_{Y'} = \widetilde{\mathcal{W}}$. Take a closed point $y \in \mathrm{Spec} A$, denote by $I_y \subset A$ the ideal of $y$ and $\mathcal{W}_y = \mathcal{W} \cap I_y \cdot B$. Then
\begin{itemize}
\item[(b)]
we may identify $V_y = V \cap I_y\cdot B$, and by the condition (ii) we have $\mathcal{W}_y = \mathrm{Span}_AV_y$, and as a consequence the ideal $I_{y}\cdot B$ is generated by $V_y$.
\end{itemize}
Then considering the following exact sequence
$$0 \to \mathcal{I}_y\cdot \mathcal{O}_{X',x} \to \mathcal{I}_{X',x}  \to \mathcal{I}_{X'_y,x}\to 0$$
by (a) and (b) we can conclude that $\mathcal{I}_{X',x}$ is globally generated by $V \cap I_x$. The above arguments show that $|V||_{X'}$ defines an isomorphism from $X'$ to its image, hence $|V|$ defines a birational map of $X$.


\smallskip

Now assume (i) and (ii'). To apply the above argument, we require moreover that $X' \cap E=\emptyset$. Then by the assumption (ii') we can find an open subset $Y' \subset Y$ such that for any $y\in Y'$, the sheaf $\mathcal{I}_y\cdot \mathcal{O}_{X',x}$ is generated by $V_y$. This combining with (a) shows the desired statement.
\end{proof}

\section{Linear systems on varieties equipped with certain fibrations}\label{sec:main-bir-crt}

In this section, we work over an algebraically closed field $k$ of characteristic $p>0$. We will prove the nonvanishing and birational criteria in Theorem \ref{thm:intr-bir-crt1} and \ref{thm:intr-bir-crt2}.


\subsection{} We restate the first criterion as follows. 
\begin{thm}\label{thm:bir-criterion-for-induction}
Let $f:X \to Y$ be a fibration of normal projective varieties and let $d = \dim Y$.
Let $D$ be a nef and big $\mathbb{Q}$-Cartier $\mathbb{Q}$-divisor on $X$, and $H, \tilde{H}$ two $\mathbb{Q}$-Cartier Weil divisors on $Y$ such that $|H|$ defines a generically finite map and $|\tilde{H}|$ is birational.

(i) If $S_{-}^0(X_{\eta}, K_{X_{\eta}} + D|_{X_{\eta}}) \neq 0$ then $S_{-}^0(X, K_X +D + f^*sH)\neq 0$ for any $s \geq d$.

(ii) If $S_{-}^0(X_{\eta}, K_{X_{\eta}} + D|_{X_{\eta}})$ is birational then $S_{-}^0(X, K_X +D + f^*sH)$ is birational for $s \geq d+1$; and if moreover $S_{-}^0(X, K_{X} +D + df^*H - f^*\tilde{H}) \neq 0$ then $S_{-}^0(X, K_X +D + f^*dH)$ is birational.
\end{thm}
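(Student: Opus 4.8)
The plan is to reduce everything to the following base-change-and-extend mechanism: sections of $K_{X_\eta}+D|_{X_\eta}$ of Frobenius-stable type ``$-$'' spread out over $Y$, and after twisting by the pullback of a sufficiently positive ample divisor on $Y$ they lift to honest Frobenius-stable sections of $K_X+D+f^*sH$. Concretely, I would first treat part (i). Pick $\Delta_\eta\in\Theta^{\mathrm{amp}}_{D|_{X_\eta}}$ and a nonzero section in $S^0_{t\Delta_\eta}(X_\eta,K_{X_\eta}+D|_{X_\eta})$; spreading out, one gets a coherent sheaf $\mathcal{G}^e=S^e_{t\Delta}g_*\mathcal{O}_X(K_{X/Y}+D)$ on $Y$ whose formation commutes with passing to the generic point, where $\Delta$ is a spread-out of $\Delta_\eta$ chosen (using Proposition~\ref{prop:F-stable-section}(ii) and the argument of Corollary~\ref{cor:bir-general-fiber}) so that $S^e_{t\Delta}$ on the generic fiber already computes $S^0_{-}$. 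Since this sheaf is nonzero at the generic point, it has a nonzero global section after twisting by $(d)$ copies of a very ample (or generically-finite-base-point-free) divisor; this is exactly the Castelnuovo–Mumford / Fujita-vanishing count that gives the bound $s\ge d=\dim Y$. The key technical point, to be imported from Section~\ref{sec:pre}, is that a global section of $\mathcal{G}^e\otimes\mathcal{O}_Y(sH)=S^e_{t\Delta}g_*\mathcal{O}_X(K_{X/Y}+D)\otimes\mathcal{O}_Y(sH)$ pulls back, via $K_{X/Y}+f^*sH\le K_X+f^*sH$ (here one uses $|sH|\ne\emptyset$, so $f^*sH$ is effective, absorbed by an $s_E$-twist), to a section of $K_X+D+f^*sH$ lying in $S^0_{-}$, because the relative trace map is compatible with $Tr^e_X$ and because the twist $t\Delta$ together with the ampleness of $sH$ on $Y$ places the section in the ``$-$'' subspace via the definition~(\ref{def:S0}).

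For part (ii), the first assertion ($s\ge d+1$) is the birational version of the same argument combined with the birational criterion Theorem~\ref{thm:bir}. Here I would arrange that $\mathcal{G}^e\otimes\mathcal{O}_Y((d+1)H)$ is globally generated over a dense open $Y'\subseteq Y$ (Mumford regularity again: one ample twist to get global generation on top of $d$ twists that already produced sections), and that the subspace $V=S^0_{-}(X,K_X+D+f^*(d+1)H)$ restricts on $X_\eta$ to something containing $S^0_{-}(X_\eta,K_{X_\eta}+D|_{X_\eta})$, hence birational on the generic fiber by hypothesis. Then condition (ii') of Theorem~\ref{thm:bir} is verified: $f^*|(d+1)H|$ (using $|H|$ generically finite, so $|(d+1)H|$ separates fibers birationally — more precisely one needs $|(d+1)H|$ birational on $Y$, which follows since $|H|$ generically finite plus one more copy upgrades to birational, or one simply invokes $|\tilde H|$ only in the second assertion and uses a generically-finite-plus-base-point-free decomposition to still separate general fibers) sits inside $|V|$ up to the effective divisor $E$ coming from the $s_E$-twists. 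Applying Theorem~\ref{thm:bir} gives birationality of $|V|$ on $X$.

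For the sharpened second assertion ($s=d$ suffices if $S^0_{-}(X,K_X+D+df^*H-f^*\tilde H)\ne0$): the point is to split $df^*H=(df^*H-f^*\tilde H)+f^*\tilde H$. The hypothesis gives a nonzero section $s_0\in S^0_{-}(X,K_X+D+df^*H-f^*\tilde H)$; then one works relatively to separate points in a general fiber using birationality of $S^0_{-}$ on $X_\eta$ (which needs only $d$ vertical twists once we allow the extra effective divisor $\mathrm{div}(s_0)$), while the remaining $f^*\tilde H$, with $|\tilde H|$ birational on $Y$, separates the fibers themselves. Formally, I would again apply Theorem~\ref{thm:bir}(ii'): take $|H'|=|\tilde H|$ on $Y$ and $E=\mathrm{div}(s_0)+(\text{the }s_E\text{-twist divisor})$, and check $f^*|\tilde H|+E\subseteq|S^0_{-}(X,K_X+D+f^*dH)|$ using Proposition~\ref{prop:F-stable-section}(i) to absorb $E$ into the ``$-$'' subspace. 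Combined with the generic-fiber birationality, Theorem~\ref{thm:bir} closes the argument.

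The main obstacle I expect is the bookkeeping in the spreading-out step: one must choose the boundary $\Delta$ on $X$ (not just on $X_\eta$) so that (a) $\mathrm{Supp}\,\Delta$ is horizontal and flat/separable over $Y$ after shrinking, so that rounding commutes with restriction to fibers and to the generic fiber — this is precisely the delicate part already handled in the proof of Corollary~\ref{cor:bir-general-fiber}; (b) $D-\Delta$ is relatively ample so Fujita vanishing (Theorem~\ref{thm:var-fujita-vanishing}) applies with a bound on $e$ independent of the nef twist $f^*sH$; and (c) the resulting global sections, after the $s_E$-twists needed because $f^*sH$ and the auxiliary sections are only \emph{effective} rather than part of a base-point-free system, genuinely land in $S^0_{-}$ rather than merely $S^0$ — this uses Proposition~\ref{prop:F-stable-section}(i) and the ampleness of $sH$ on $Y$ in an essential way. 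Everything else is the standard Mumford-regularity count ($d$ twists for sections, $d+1$ for global generation) plus the birational criterion Theorem~\ref{thm:bir}.
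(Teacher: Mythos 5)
Your overall architecture --- push everything forward to the base, apply Fujita vanishing and Castelnuovo--Mumford regularity with $d$ (resp.\ $d+1$) twists, reduce $S^0_-$ to a single $S^0_{t\Delta}$ with $D-\Delta$ ample via Proposition \ref{prop:F-stable-section}(ii), and then invoke the birational criterion Theorem \ref{thm:bir} --- is the same as the paper's. Your part (i) and your treatment of the refined statement in part (ii) (splitting off $f^*\tilde H$, taking $E=\mathrm{div}(s_0)$, and verifying condition (ii$'$) of Theorem \ref{thm:bir} with the birational system $|\tilde H|$) essentially reproduce the paper's argument.

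The first assertion of part (ii) ($s\ge d+1$), however, has a genuine gap. You verify condition (ii$'$) of Theorem \ref{thm:bir} by asserting that $|(d+1)H|$ is birational on $Y$ because ``$|H|$ generically finite plus one more copy upgrades to birational.'' This is false: take $Y$ a hyperelliptic curve of genus $\ge 4$ and $H=g^1_2$; then $|H|$ defines a generically finite (degree $2$) map, but $|2H|=|2g^1_2|$ still factors through the hyperelliptic involution, so $|(d+1)H|=|2H|$ is not birational. Your fallback clause is not an argument. What is actually needed --- and what the paper proves --- is condition (ii) of Theorem \ref{thm:bir} for the induced morphism $f'\colon X\to Y'$ onto the image of the map defined by $|H|$ (so $H=\mu^*H'$ with $H'$ ample and base point free): for $s\ge d+1$ and general $y'$, the sheaf $\mathcal{I}_{y'}\cdot\mathcal{V}^e_s$ is globally generated by the subspace of sections vanishing along $f'^{-1}(y')$. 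This is a Mumford-regularity statement for the ideal-sheaf twist, obtained by tensoring the pushforward with the Koszul complex of $d$ general members of $|H'|$ through $y'$; it is strictly stronger than global generation of $\mathcal{V}^e_s$ itself (which already holds for $s\ge d$ and only yields nonvanishing), and it is what separates points lying over distinct points of $Y'$, while points in the same fiber of $f'$ are separated by the generic-fiber hypothesis (the generic fiber of $f'$ coincides with $X_\eta$ as a scheme). Two smaller slips in the same direction: Mumford regularity requires the ample $H'$ on $Y'$, not the merely base-point-free, non-ample $H$ on $Y$ (and ``the ampleness of $sH$ on $Y$'' in your point (c) is not available --- the ampleness actually used is that of $D-\Delta$ on $X$); and $K_{X/Y}+f^*sH\le K_X+f^*sH$ does not follow from effectivity of $f^*sH$, since it would require $f^*K_Y\ge 0$ --- the paper avoids $K_{X/Y}$ and works with $K_X$ directly.
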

\begin{proof}
To ease the situation, we first blow up $Y$ to make the movable part of $|H|$ have no base point, then blow up $X$ along some locus disjoint to $X_{\eta}$ to make sure the rational map $X\to Y$ is still a morphism, and replace $H$ with the movable part and replace $D, \tilde{H}$ with their pull-backs. Remark that under the above birational modifications the assumptions in the theorem still hold. From now on, we may assume $|H|$ is base point free.

Next we claim that for each $m \in \mathbb{N}$ there exists an effective divisor $\Delta_m$ on $X$ such that $D-\Delta_m$ is ample, and that $S_{-}^0(X, K_X +D + f^*mH) = S_{\Delta_m}^0(X, K_X +D + f^*mH)$. To prove this claim, we fix an effective divisor $\Delta \in \Theta_D^{\mathrm{amp}}$. By Proposition \ref{prop:F-stable-section},
there exists a divisor $\Delta'_m \in \Theta_{D + f^*mH}^{\mathrm{amp}}$ such that
$S_{-}^0(X, K_X +D + f^*mH) = S_{t\Delta'_m}^0(X, K_X +D + f^*mH)$ for any sufficiently small positive rational number $t$. We may take a sufficiently small $s\in \mathbb{Q}^{+}$ such that $D-\Delta - s\Delta'_m$ is ample. Set $\tilde{\Delta}_m = \Delta + s\Delta'_m$. Then for any $t\in \mathbb{Q}^{+}$,
$$S_{t\tilde{\Delta}_m}^0(X, K_X +D + f^*mH) \subseteq S_{ts\Delta'_m}^0(X, K_X +D + f^*mH) \subseteq S_{-}^0(X, K_X +D + f^*mH).$$
By the definition (\ref{def:S0}) we can take a sufficiently small $t_0\in \mathbb{Q}^{+}$ such that  both the above inclusions attain equality, then we may let $\Delta_m=t_0(\Delta + s\Delta'_m)$.

We fix a sufficiently small rational number $t \ll1$ and may assume that
$$S_{-}^0(X_{\eta}, K_{X_{\eta}} + D|_{X_{\eta}}) \subseteq S_{t\Delta_m|_{X_{\eta}}}^0(X_{\eta}, K_{X_{\eta}} + D|_{X_{\eta}}).$$
By the construction of $\Delta_m$, we only need to prove the following Theorem \ref{thm:bir-criterion-1}.
\end{proof}

\begin{thm}\label{thm:bir-criterion-1}
Let the notation and assumptions be as in Theorem \ref{thm:bir-criterion-for-induction}. Assume moreover that $|H|$ is free of base point. Let $\Delta$ be an effective $\mathbb{Q}$-Cartier $\mathbb{Q}$-divisor on $X$ such that $D -\Delta$ is ample.

(i) If $S_{\Delta|_{X_{\eta}}}^0(X_{\eta}, K_{X_{\eta}} + D|_{X_{\eta}}) \neq 0$, then $S_{\Delta}^0(X, K_X +D + f^*sH) \neq 0$ for any $s \geq d$.

(ii) If $S_{\Delta|_{X_{\eta}}}^0(X_{\eta}, K_{X_{\eta}} + D|_{X_{\eta}})$ is birational, then $S_{\Delta}^0(X, K_X +D + f^*sH)$ is birational for any $s \geq d+1$, and if in addition $S_{\Delta}^0(X, K_{X} +D + df^*H - f^*\tilde{H}) \neq 0$ then $S_{\Delta}^0(X, K_X +D + f^*dH)$ is birational.
\end{thm}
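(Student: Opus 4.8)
The plan is to run, relatively over the fibration $f:X\to Y$, the Frobenius-stable analogue of the Keeler--Mumford inductive argument: obtain sections on all of $X$ by lifting Frobenius-stable sections from the generic fibre after twisting by enough copies of $f^*H$, using relative Fujita vanishing (Theorem~\ref{thm:var-fujita-vanishing}) where one would classically use Kodaira-type vanishing, together with Castelnuovo--Mumford regularity with respect to the base-point-free system $|H|$. Set $L:=D-\Delta$, which is an ample $\mathbb{Q}$-Cartier $\mathbb{Q}$-divisor by hypothesis, and $E:=\ulcorner D+f^*sH\urcorner-\ulcorner L+f^*sH\urcorner$. By definition $S^0_\Delta(X,K_X+D+f^*sH)=S^0(X,K_X+L+f^*sH)\otimes s_E$, and likewise on $X_\eta$; since $E\ge 0$, twisting by $s_E$ changes neither nonvanishing nor birationality, so all three assertions reduce to the corresponding statements about $S^0(X,K_X+L+f^*sH)$. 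As $|H|$ is base-point free it is Cartier, so $\ulcorner p^e(L+f^*sH)\urcorner=\ulcorner p^eL\urcorner+p^ef^*sH$; writing $\mathcal{G}^e:=f_*F^e_*\mathcal{O}_X(K_X+\ulcorner p^eL\urcorner|_X)$, the projection formula identifies $H^0\!\big(X,F^e_*\mathcal{O}_X(K_X+\ulcorner p^e(L+f^*sH)\urcorner)\big)$ with $H^0(Y,\mathcal{G}^e(sH))$, and the trace $Tr^e_X$ descends to a map of sheaves $\mathcal{G}^e(sH)\to f_*\mathcal{O}_X(K_X+\ulcorner L+f^*sH\urcorner)$ on $Y$.

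\textbf{Regularity.} Since $L$ is ample, $\ulcorner p^eL\urcorner$ is $f$-ample for $e$ large, so relative Fujita vanishing gives $R^{>0}f_*\mathcal{O}_X(K_X+\ulcorner p^eL\urcorner|_X)=0$ for $e\gg 0$; combined with the projection formula this yields $H^i(Y,\mathcal{G}^e(jH))=H^i\!\big(X,\mathcal{O}_X(K_X+\ulcorner p^e(L+jf^*H)\urcorner)\big)$ for $e\gg 0$ and all $j$. Because $L$ is ample and $jf^*H$ is nef for $j\ge 0$, Theorem~\ref{thm:var-fujita-vanishing} kills the right-hand side for $e\gg 0$ whenever $j\ge 0$. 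Hence, for $e\gg 0$, the sheaf $\mathcal{G}^e(sH)$ is $0$-regular with respect to $|H|$ when $s\ge d$ and $(-1)$-regular when $s\ge d+1$. In particular $\mathcal{G}^e(sH)$ is globally generated when $s\ge d$, so $H^0(Y,\mathcal{G}^e(sH))$ surjects onto $\mathcal{G}^e(sH)\otimes k(\eta_Y)=H^0\!\big(X_\eta,F^e_*\mathcal{O}_{X_\eta}(K_{X_\eta}+\ulcorner p^e(L|_{X_\eta})\urcorner)\big)$; since $Tr^e$ commutes with restriction to $X_\eta$ and the decreasing chains $S^e(X,\cdot)$ and $S^e(X_\eta,\cdot)$ stabilize for $e\gg 0$, it follows that for $s\ge d$ the restriction map $S^0(X,K_X+L+f^*sH)\to S^0(X_\eta,K_{X_\eta}+L|_{X_\eta})$ is surjective, and (the opposite inclusion being automatic) an equality.

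\textbf{Conclusions.} For (i): if $S^0(X_\eta,K_{X_\eta}+L|_{X_\eta})\ne 0$ then surjectivity gives $S^0(X,K_X+L+f^*sH)\ne 0$ for $s\ge d$. For the first statement of (ii): when $s\ge d+1$, $S^0(X,K_X+L+f^*sH)$ restricts onto the birational system $S^0(X_\eta,K_{X_\eta}+L|_{X_\eta})$, giving hypothesis (i) of Theorem~\ref{thm:bir} for $V:=S^0_\Delta(X,K_X+D+f^*sH)$; and the $(-1)$-regularity of $\mathcal{G}^e(sH)$, over the dense open $Y'\subseteq Y$ on which the generically finite map defined by $|H|$ is quasi-finite and $\mathcal{G}^e(sH)$ is locally free, supplies hypothesis (ii) of Theorem~\ref{thm:bir} --- one has only to generate the ideal sheaves of the fibres $f^{-1}(y)$, not to separate points of $Y$, and the ampleness of $L$ provides the required sections inside the Frobenius-stable subspace --- so $|V|$ is birational. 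For the second statement of (ii): with $s=d$, $\mathcal{G}^e(dH)$ is globally generated, so $S^0_\Delta(X,K_X+D+df^*H)$ again restricts onto the birational system on $X_\eta$; choosing $0\ne\theta\in S^0_\Delta(X,K_X+D+df^*H-f^*\tilde H)$ and using Proposition~\ref{prop:F-stable-section}(i) to twist it by the pullbacks of the members of $|\tilde H|$, one obtains $f^*|\tilde H|+\mathrm{div}(\theta)\subseteq|S^0_\Delta(X,K_X+D+df^*H)|$; since $|\tilde H|$ is birational, Theorem~\ref{thm:bir}(ii$'$) applies and yields birationality of $S^0_\Delta(X,K_X+D+df^*H)$.

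\textbf{Main obstacle.} The essential difficulty is bookkeeping rather than conceptual. First, one must make precise the compatibility of the trace maps $Tr^e$ with restriction to the (possibly non-normal) generic fibre, including the identity $\ulcorner p^e(L+f^*sH)\urcorner|_{X_\eta}=\ulcorner p^e(L|_{X_\eta})\urcorner$; this is handled exactly as in \S\ref{sec:Frob-stable-section-bc} and in the proof of Corollary~\ref{cor:bir-general-fiber}. Second, one must arrange that the stabilization of the Frobenius-stable subspaces on $X$ and on $X_\eta$, the relative vanishing, and the regularity estimates all hold at a common level $e\gg 0$, so that a section produced at that level genuinely lies in $S^0=\bigcap_e S^e$ rather than merely in some $S^e$. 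Third, the Castelnuovo--Mumford step must accommodate that $|H|$ is only generically finite, which is why $Y'$ is taken inside its quasi-finite locus and why it is the fibre ideals, not points of $Y$, that one generates. The $(-1)$-regularity count is exactly what forces the bound $s\ge d+1$ in the birationality statement, and the availability of the birational system $|\tilde H|$ is what permits $s=d$ in the last statement.
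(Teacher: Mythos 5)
Your strategy is the paper's: replace $D$ by the ample $L=D-\Delta$, use relative Fujita vanishing to kill $R^{>0}f_*$, deduce Castelnuovo--Mumford regularity of the pushforward twisted by $sH$, and feed the resulting generation statements into Theorem~\ref{thm:bir}, using condition (ii) for $s\geq d+1$ and condition (ii$'$) together with the nonvanishing hypothesis for $s=d$. The reduction to $S^0(X,K_X+L+f^*sH)$, the stabilization of the chains $S^e$, and the treatment of the last assertion via $f^*|\tilde H|+\mathrm{div}(\theta)$ all match the paper.

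There is, however, one step that fails as written: you perform Mumford regularity on $Y$ with respect to $|H|$, which is only base point free and big, and conclude that a $0$-regular sheaf is globally generated. That implication genuinely requires $H$ to be ample: the final step of Mumford's argument uses that $\mathcal{F}\otimes\mathcal{O}(mH)$ is globally generated for $m\gg0$. For a non-ample big and free $H$ this can fail; e.g.\ on the blow-up $\mu:Y\to\mathbb{P}^2$ of a point with exceptional curve $E$ and $H=\mu^*\mathcal{O}(1)$, the sheaf $\mathcal{O}_Y(E)\otimes\mu^*\mathcal{O}(k)$ is $0$-regular with respect to $H$ for $k\gg0$ (since $R\mu_*\mathcal{O}_Y(E)=\mathcal{O}_{\mathbb{P}^2}$) but is never globally generated along $E$. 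Restricting attention to the quasi-finite locus of the map defined by $|H|$, as you propose, does not repair this, because global generation is a statement about global sections and cannot be localized. The correct fix --- and what the paper does --- is to pass to the image $Y'$ of the generically finite map $\mu:Y\to Y'$ defined by $|H|$, where $H'=\mu_*H$ is ample and base point free, and to work with $f'=\mu\circ f$ and the sheaves $\mathcal{F}^e_s=f'_*(F^e_*\mathcal{O}_X(K_X+\ulcorner p^e(D-\Delta)\urcorner))\otimes\mathcal{O}_{Y'}(sH')$; the relative Fujita vanishing you already invoke, applied to $f'$ rather than $f$, gives $R^{>0}f'_*=0$ and hence the Leray identifications needed to establish $0$-regularity on $Y'$, after which Mumford's theorem applies verbatim (and the Koszul complex of $d$ general members of $|H'|$ through a point yields the regularity of the ideal twists for $s\geq d+1$). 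With this modification your argument goes through and coincides with the paper's proof.
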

\begin{proof}
Let $\mu: Y \to Y'$ be the associated map to $H$. Then $H'= \mu_*H$ is ample, and the linear system $|H'|$ is base point free. Denote by $f': X \to Y'$ the natural morphism. We fit these varieties into the following commutative diagram
$${\small \xymatrix{&X\ar[d]^{f}\ar[rd]^{f'}  &\\
&Y\ar[r]^{\mu}   &Y'}}.$$
\smallskip

Let
$$\mathcal{F}_{s}^e= f'_*(F^{e}_*\mathcal{O}_X(K_X + \ulcorner p^e(D-\Delta)\urcorner)) \otimes \mathcal{O}_{Y'}(sH')$$
and
$$\mathcal{V}_{s}^e = \mathrm{Im}[f'_*Tr^e_{\Delta}: \mathcal{F}_{s}^e  \to f'_*\mathcal{O}_X(K_X + \ulcorner D\urcorner) \otimes \mathcal{O}_{Y'}(sH')]$$
and
$$V_{s}^e = \mathrm{Im}(Tr^e_{\Delta}: H^0(Y', \mathcal{F}_{s}^e) \to H^0(Y', \mathcal{V}_{s}^e)).$$
For sufficiently large $e$ we have natural isomorphisms
$$\mathcal{V}_{s}^e\otimes k(\eta) \cong S_{\Delta|_{X_{\eta}}}^0(X_{\eta}, K_{X_{\eta}} + D|_{X_{\eta}})~\mathrm{and}~ V_{s}^e \cong V_{s}(:=S_{\Delta}^0(X, K_X +D + f^*sH)).$$

\begin{lem}\label{lem:Mumford-reg}
There exists a positive integer $e_0$ such that for any $e>e_0$,

(a)  if $s \geq d$ then the sheaf $\mathcal{V}_{s}^e$ is globally generated by $V_{s}$, and

(b) if $s \geq d+1$ then for any closed point $y' \in Y'$ over which $f'$ is flat, the sheaf $\mathcal{I}_{y'}\cdot\mathcal{V}_{s}^e$ is globally generated by $V_{s,y'}=V_{s}\cap H^0(Y',\mathcal{I}_{y'}\cdot\mathcal{V}_{s}^e)$.
\end{lem}
Granted this lemma, under the assumptions of the theorem, the assertion (i) follows from (a); by applying Theorem \ref{thm:bir}, the first part of the assertion (ii) follows from (b) and the second part follows from (a).
\smallskip

We start to prove Lemma \ref{lem:Mumford-reg}. By the construction, we only need to verify that $\mathcal{F}_{\Delta, s}^e$ (reps. $\mathcal{I}_{y'}\cdot\mathcal{F}_{\Delta, s}^e$) satisfies Mumford regularity if $s\geq d$ (resp. $s \geq d+1$).

Since $D-\Delta$ is an ample $\mathbb{Q}$-Cartier $\mathbb{Q}$-divisor, applying relative Fujita vanishing (Theorem \ref{thm:var-fujita-vanishing}) there exists $e_1$ such that
\begin{itemize}
\item[$\diamondsuit$:]
for any $e\geq e_1$ and $i>0$, $R^if'_*(F^{e}_*\mathcal{O}_X(K_X + \ulcorner p^e(D-\Delta)\urcorner)) =0$.
\end{itemize}
Consider the Lerray spectral sequence associated to $R\Gamma\circ Rf'_*(F^{e}_*\mathcal{O}_X(K_X + \ulcorner p^e(D-\Delta)\urcorner))$. Applying Fujita vanishing we show that there
exists $e_0 \geq e_1$ such that for any $e\geq e_0$, if $s\geq d$ then
$$\clubsuit:~H^i(Y', \mathcal{F}_{s}^e(-jH')) \cong H^i(X, K_X + \ulcorner p^e(D-\Delta)\urcorner + (s-j)p^ef'^*H') = 0 ~\mathrm{for}~j\leq d,~i>0.$$
In particular, the sheaf $\mathcal{F}_{\Delta, s}^e$ satisfies Mumford regularity if $s\geq d$.

Fix $e\geq e_0$. To prove $\mathcal{I}_{y'}\cdot\mathcal{F}_{\Delta, s}^e$ satisfies Mumford regularity, we take $d$ general hypersurfaces $H_1', H_2', \cdots, H_d' \in |H'|$ passing through $y'$.
We may assume that $\mathrm{Supp} (\bigcap_{t=1}^{t=d} H_t')$ consists of finitely many isolated points and is contained in the flat locus of $f'$. Then Kozul complex gives an exact sequence
$$0 \to \mathcal{O}_{Y'}(-\sum_{t=1}^{t=d}H_t') \to \cdots \to \bigoplus_{t=1}^{t=d}\mathcal{O}_{Y'}(-H_t') \to \mathcal{J} \to 0$$
where $\mathcal{J} \subset \mathcal{O}_{Y'}$ is an ideal sheaf such that $\mathrm{Supp}~\mathcal{O}_{Y'}/\mathcal{J} =\mathrm{Supp} ~(\bigcap_{t=1}^{t=d} H_t')$. By the condition ($\diamondsuit$), we know that
the sheaf $\mathcal{F}_{s}^e$ is locally free around $\mathrm{Supp} (\bigcap_{t=1}^{t=d} H_t')$. Tensoring the above exact sequence with $\mathcal{F}_{s}^e$ induces the following exact sequence
$$(*): 0 \to \mathcal{F}_{s}^e(-\sum_{t=1}^{t=d}H_t') \to \cdots \to \bigoplus_{t=1}^{t=d}\mathcal{F}_{s}^e(-H_t') \to \mathcal{J}\cdot\mathcal{F}_{s}^e \to 0$$
Take the cohomology of $(*)$. By a standard application of spectral sequence, from the condition $\clubsuit$ we deduce that if $s\geq d+1$ then for $i>0$
$$H^i(Y', \mathcal{J}\cdot\mathcal{F}_{s}^e(-iH')) = 0.$$
Let $\tau = \frac{\mathcal{I}_{y'}}{\mathcal{J}}\cdot\mathcal{F}_{s}^e(-iH')$. We have the following exact sequence
$$0 \to \mathcal{J}\cdot\mathcal{F}_{s}^e(-iH') \to \mathcal{I}_{y'}\cdot\mathcal{F}_{s}^e(-iH')  \to \tau \to 0.$$
Since $H^i(Y', \tau)=0$ for each $i >0$, by taking the cohomology of the above sequence, we can show that if $s\geq d+1$, then $$H^i(Y', \mathcal{I}_{y'}\cdot\mathcal{F}_{s}^e(-iH')) = 0 ~\mathrm{for~any}~i>0,$$
that is to say, the sheaf $\mathcal{I}_{y'}\cdot\mathcal{F}_{s}^e$ satisfies Mumford regularity.
\end{proof}


\subsection{} Recall that an \emph{irregular variety} $X$ is a smooth projective variety with $q(X):=\dim \mathrm{Pic}^0(X) >0$. For this kind of varieties, we can take advantage of the Albanese map and have the following theorem.
\begin{thm}\label{thm:bir-criterion-irr}
Let $X$ be a smooth projective variety with a morphism $a: X \to A$ to an abelian variety. Denote by $f: X \to Y$ the fibration arising from the Stein factorization of $a: X \to A$. Let $D, D_1,D_2$ be three divisors on $X$. Assume that $D$ is nef, big and $\mathbb{Q}$-Cartier.

(i) If $S^0_{-}(X_{\eta}, K_{X_{\eta}} + D_{\eta}) \neq 0$, then for any $\mathcal{P}_{\alpha} \in \mathrm{Pic}^0(A)$, $H^0(X, K_X + \ulcorner D\urcorner + a^*\mathcal{P}_{\alpha})) \neq 0$, and there exists some $\mathcal{P}_{\beta} \in \mathrm{Pic}^0(A)$ such that $S_{-}^0(X, K_X + \ulcorner D\urcorner + a^*\mathcal{P}_{\beta})\neq 0$.

(ii) Assume that $S^0_{-}(X_{\eta}, K_{X_{\eta}} + D_{\eta}) \neq 0$, $D_1$ is integral and that for any $\mathcal{P}_{\alpha} \in \mathrm{Pic}^0(A)$, $|D_1 + a^*\mathcal{P}_{\alpha}| \neq \emptyset$. Then for any $\mathcal{P}_{\alpha_0} \in \mathrm{Pic}^0(A)$, $S^0_{-}(X, K_X + D + D_1 + a^*\mathcal{P}_{\alpha_0}) \neq 0$.

(iii) Assume that $S^0_{-}(X_{\eta}, K_{X_{\eta}} + D_{\eta})$ is birational, both $D_1$ and $D_2$ are integral and that for any $\mathcal{P}_{\alpha} \in \mathrm{Pic}^0(A)$, $|D_i + a^*\mathcal{P}_{\alpha}| \neq \emptyset$. Then for any $\mathcal{P}_{\alpha_0} \in \mathrm{Pic}^0(A)$, $S^0_{-}(X, K_X + D + D_1+D_2 + a^*\mathcal{P}_{\alpha_0})$ is birational.

(iv) Assume that $S^0_{-}(X_{\eta}, K_{X_{\eta}} + D_{\eta})$ is birational, and that $D_1,D_2$ are nef and big $\mathbb{Q}$-Cartier $\mathbb{Q}$-divisors such that $S^0_{-}(X_{\eta}, K_{X_{\eta}} + (D_i)_{\eta}) \neq 0$. Then for any $\mathcal{P}_{\alpha_0} \in \mathrm{Pic}^0(A)$,
$S^0_{-}(X, K_X + D + (K_{X} + \ulcorner D_1\urcorner) + (K_{X} +  \ulcorner D_2\urcorner )+ a^*\mathcal{P}_{\alpha_0}))$ is birational.
\end{thm}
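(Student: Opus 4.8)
\emph{The plan.} The plan is to run the Fujita-vanishing plus Mumford-regularity argument of Theorem \ref{thm:bir-criterion-for-induction}, but on the abelian variety $A$, and to replace the ample divisor on the base that appears there by a general line bundle $\mathcal{P}_\alpha \in \mathrm{Pic}^0(A)$ by invoking Pareschi--Popa's continuous global generation (CGG). Write the Stein factorization of $a$ as $X \xrightarrow{f} Y \xrightarrow{g} A$, so that $Y$ is normal, $g$ is finite, $a^*\mathcal{P}_\alpha = f^*g^*\mathcal{P}_\alpha$, and $a^*\mathcal{P}_\alpha|_{X_\eta} \cong \mathcal{O}_{X_\eta}$. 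One first reduces to the case where $a(X)$ generates $A$, fixes $\Delta \in \Theta_{D}^{\mathrm{amp}}$ exactly as in the proof of Theorem \ref{thm:bir-criterion-for-induction} (small, with the right support), and for $e \gg 0$ forms on $A$ the sheaf $\mathcal{F}^e := (g\circ f)_* F^e_*\mathcal{O}_X(K_X + \ulcorner p^e(D-\Delta)\urcorner)$ together with the trace-image subsheaf $\mathcal{V}^e \subseteq (g\circ f)_*\mathcal{O}_X(K_X + \ulcorner D\urcorner)$. Since $D - \Delta$ is ample and $a^*\mathcal{P}_\alpha$ is numerically trivial, $D - \Delta + a^*\mathcal{P}_\alpha$ is ample; so relative Fujita vanishing (Theorem \ref{thm:var-fujita-vanishing}) together with the Leray spectral sequence for $g\circ f$ ($g$ finite, $R^{>0}f_*$ of the sheaf in question vanishing for $e\gg 0$) yields an $e_0$ \emph{independent of $\alpha$} with $H^{>0}(A, \mathcal{F}^e \otimes \mathcal{P}_\alpha) = 0$ for all $\alpha$ and all $e \ge e_0$; that is, $\mathcal{F}^e$ satisfies $\mathrm{IT}_0$, hence is M-regular, hence CGG, and its quotient $\mathcal{V}^e$ is CGG too. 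By Proposition \ref{prop:F-stable-section}(iii), for $e$ large (uniformly in numerically trivial twists) $H^0(A, \mathcal{V}^e\otimes\mathcal{P}_\alpha)$ is identified with the relevant $S^0_-$-subspace of $H^0(X, K_X + \ulcorner D\urcorner + a^*\mathcal{P}_\alpha)$; and $\mathcal{V}^e \ne 0$, its generic stalk over $A$ being $S^0_-(X_\eta, K_{X_\eta}+D_\eta) \ne 0$ (for the geometric generic fibre one passes, via Theorem \ref{thm:bir-geo-gen}, to a normal model of $X^{\mathrm{red}}_{\bar{\eta}}$).

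\emph{Part (i).} CGG of the nonzero sheaf $\mathcal{V}^e$ gives, for $\alpha$ in a dense open $U \subseteq \mathrm{Pic}^0(A)$, $H^0(A, \mathcal{V}^e \otimes \mathcal{P}_\alpha) \ne 0$; pulling back along $g\circ f$ this produces, for $e \ge e_0$, a nonzero element of $S^0_-(X, K_X + \ulcorner D\urcorner + a^*\mathcal{P}_\alpha)$ — here one uses that the $\mathcal{P}_\alpha$-twist of the defining trace map is again a trace map, by the projection formula for the absolute Frobenius. Taking $\mathcal{P}_\beta$ to be any member of $U$ proves the second assertion. The first assertion, $H^0(X, K_X + \ulcorner D\urcorner + a^*\mathcal{P}_\alpha) \ne 0$ for \emph{all} $\alpha$, then follows from the case of general $\alpha$ just established, since $\alpha \mapsto h^0(X, K_X + \ulcorner D\urcorner + a^*\mathcal{P}_\alpha)$ is upper semicontinuous on $\mathrm{Pic}^0(A)$, so its positivity locus is closed and contains a dense open, hence is everything.

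\emph{Parts (ii), (iii), (iv).} For (ii): take $\mathcal{P}_\beta \in U$ and a nonzero $s \in S^0_-(X, K_X + D + a^*\mathcal{P}_\beta)$ (the divisor $D + a^*\mathcal{P}_\beta$ is still nef, big and $\mathbb{Q}$-Cartier); since $|D_1 + a^*\mathcal{P}_{\alpha_0 - \beta}| \ne \emptyset$ by hypothesis, Proposition \ref{prop:F-stable-section}(i) makes $s \cdot s_{D_1 + a^*\mathcal{P}_{\alpha_0-\beta}}$ a nonzero element of $S^0_-(X, K_X + D + D_1 + a^*\mathcal{P}_{\alpha_0})$. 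For (iii): apply the birational criterion Theorem \ref{thm:bir} to $f\colon X\to Y$, with $V$ the subspace of $S^0_-(X, K_X + D + D_1 + D_2 + a^*\mathcal{P}_{\alpha_0})$ obtained from the construction above. Condition (i) of Theorem \ref{thm:bir} holds because $V$ restricts on $X_\eta$ onto a system containing $S^0_-(X_\eta, K_{X_\eta}+D_\eta) \otimes s_{(D_1+D_2)|_{X_\eta}}$ (Proposition \ref{prop:F-stable-section}(i), using $a^*\mathcal{P}_{\alpha_0}|_{X_\eta} \cong \mathcal{O}_{X_\eta}$), which is birational by hypothesis; and condition (ii) of Theorem \ref{thm:bir} — global generation near a general point of $Y$ of the $\mathcal{I}_y$-twist of the sheaf generated by $V$ — reduces, after pushforward along the finite map $g$, to the statement that $\mathcal{V}^e \otimes \mathcal{G}_1 \otimes \mathcal{G}_2 \otimes \mathcal{P}_{\alpha_0}$, where $\mathcal{G}_i := (g\circ f)_*\mathcal{O}_X(D_i)$ satisfies $H^0(A, \mathcal{G}_i \otimes \mathcal{P}_\alpha) = H^0(X, D_i + a^*\mathcal{P}_\alpha) \ne 0$ for \emph{every} $\alpha$, separates a prescribed general point of $A$ from a general point — the standard output, in the Pareschi--Popa framework, of tensoring a CGG sheaf ($\mathcal{V}^e$) with two sheaves all of whose $\mathrm{Pic}^0$-translates are effective ($\mathcal{G}_1,\mathcal{G}_2$, one per point to be separated). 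Theorem \ref{thm:bir} then gives birationality of $|V|$, hence of $S^0_-(X, K_X + D + D_1 + D_2 + a^*\mathcal{P}_{\alpha_0})$. Finally (iv) follows from (i) and (iii): by (i) applied to $D_1$ and to $D_2$, the \emph{integral} divisors $D_i' := K_X + \ulcorner D_i\urcorner$ satisfy $|D_i' + a^*\mathcal{P}_\alpha| \ne \emptyset$ for all $\alpha$, so (iii) with $D_1',D_2'$ in place of $D_1,D_2$ is precisely the assertion of (iv).

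\emph{Main obstacle.} The delicate point is the positive-characteristic CGG input — that the pushforwards to $A$ of these Frobenius-stable sheaves, and of their tensor products with the $\mathcal{G}_i$, are M-regular, and that the Pareschi--Popa global-generation and point-separation statements persist through the Frobenius twists occurring in the trace maps — together with the bookkeeping (uniform in $e$ and $\alpha$, and resting on Proposition \ref{prop:F-stable-section}) that certifies that the sections produced on $A$ are honest elements of $S^0_-$ on $X$. The $\mathrm{IT}_0$ reformulation via Fujita vanishing is what makes this tractable and sidesteps the subtler generic-vanishing machinery.
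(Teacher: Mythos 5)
Your treatment of (i), (ii) and (iv) is essentially the paper's: Fujita vanishing makes $\pi_*\mathcal{F}^e$ an $IT^0$ (hence CGG) sheaf on $A$, CGG plus semicontinuity gives (i), multiplication by a section of $D_1+a^*\mathcal{P}_{\alpha_0-\beta}$ gives (ii), and (iv) reduces to (iii) via (i). The problem is (iii), which is the heart of the theorem. You propose to feed $V=S^0_-(X,K_X+D+D_1+D_2+a^*\mathcal{P}_{\alpha_0})$ into Theorem \ref{thm:bir}, and for its condition (i) you assert that $V|_{X_\eta}$ contains $S^0_-(X_\eta,K_{X_\eta}+D_\eta)\otimes s_{(D_1+D_2)|_{X_\eta}}$. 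That containment is exactly the section-extension statement that fails here: with a base that is (finite over) an abelian variety there is no ample $H$ to add, and a section of $S^0_-(X_\eta,K_{X_\eta}+D_\eta)$ only extends after twisting by a \emph{variable} $\mathcal{P}_\alpha$. What CGG produces is a surjection onto the generic stalk from $\bigoplus_j H^0(\mathcal{F}^e\otimes\mathcal{P}_{\alpha_j})\otimes\mathcal{P}_{\alpha_j}^{-1}$ for several $\alpha_j$; after untwisting each summand by a different product $s_jt_j$ the restriction to $X_\eta$ is $\sum_j W_j\otimes(s_jt_j)|_{X_\eta}$ with $\sum_j W_j\supseteq S^0_-(X_\eta,\cdot)$ but no single $W_j$ birational, and such a sum need not define a birational map of $X_\eta$. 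So condition (i) of Theorem \ref{thm:bir} is not verified, and the fiber direction of the separation is left open.

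The paper avoids this by not separating fiber and base directions at all. For a general point $x$ it forms the evaluation kernel $\empty_x\mathcal{F}^e$, proves that $\pi_*(\empty_x\mathcal{F}^e)$ is a \emph{GV} sheaf (it is not $IT^0$ or $M$-regular: killing one evaluation creates a divisorial locus where $H^1\neq 0$), and invokes Corollary \ref{cor:generation}(i) to generate it by global sections of finitely many specific twists $\mathcal{P}_{\alpha_j}$. Pulling back to $X$ and composing with $f^*(\empty_x\mathcal{V}^e)\to\mathcal{I}_x\cdot\mathcal{O}_X(K_X+\ulcorner D\urcorner)$ — surjective on an open set precisely because the generic stalk of $\mathcal{V}^e$ contains the birational system $S^0_-(X_\eta,\cdot)$ — gives generation of $\mathcal{I}_x\cdot\mathcal{O}_X(K_X+\ulcorner D\urcorner)$ on an open subset of $X$ by the spaces $\empty_xS^0_\Delta(X,K_X+D+a^*\mathcal{P}_{\alpha_j})$. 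Only then are $D_1$ and $D_2$ used, and for a reason different from the one you give: for each $j$ the discrepancy $\alpha_0-\alpha_j$ must be split as $(\alpha_0-\alpha_j-\beta_j)+\beta_j$ with $\beta_j$ general so that both $|D_1+a^*\mathcal{P}_{\alpha_0-\alpha_j-\beta_j}|$ and $|D_2+a^*\mathcal{P}_{\beta_j}|$ avoid the second point $z$ (mere nonemptiness of every translate does not control the base locus at $z$ for the \emph{specific} twist $\alpha_0-\alpha_j$). Your ``main obstacle'' paragraph correctly senses that the positivity of these auxiliary sheaves is the delicate point, but the $M$-regularity of the tensor products you appeal to is neither established nor the right statement; the GV property of the evaluation kernel plus the two-divisor untwisting is the missing mechanism.
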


In \cite{Zhy14} the author proved that for a smooth projective variety $X$ of maximal Albanese dimension and of general type, if in addition the Albanese map is separable then $4K_X$ is birational. In general, if $K_X$ is big then there exists a nef and big $\mathbb{Q}$-divisor $D \leq K_X$, we can apply the theorem above and obtain the following result.
\begin{cor}
Let $X$ be a smooth projective variety of maximal Albanese dimension and of general type. Then $S^0_{-}(X, K_X + 5K_{X})$ is birational.
\end{cor}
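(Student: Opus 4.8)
The plan is to apply Theorem~\ref{thm:bir-criterion-irr}(iv) with a suitable decomposition of $5K_X$. Since $X$ is of general type, $K_X$ is big; as a divisor on a smooth projective variety it is moreover $\mathbb{Q}$-Cartier, so there exists an effective $\mathbb{Q}$-divisor $E$ and a nef and big $\mathbb{Q}$-divisor $D$ with $K_X \sim_{\mathbb{Q}} D + E$ (take a resolution of the base locus of a large multiple of $K_X$, though here $X$ is already smooth, so one may simply pass to a high multiple, write the movable part, and use that its a nef and big class up to a small ample perturbation; concretely, for $m \gg 0$ write $|mK_X| = |M_m| + F_m$ with $M_m$ base-point-free hence nef and big, and set $D = \frac1m M_m$). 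Enlarging $E$ slightly we may take $D = D_1 = D_2$ to be one fixed nef and big $\mathbb{Q}$-Cartier $\mathbb{Q}$-divisor with $\ulcorner D_i \urcorner \leq K_X$ for each $i$, at the cost of replacing the coefficient $1$ with something $< 1$.

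Next I would verify the hypotheses of (iv) on the generic fiber of the Stein factorization $f : X \to Y$ of the Albanese map $a : X \to A$. Because $X$ has maximal Albanese dimension, $a$ is generically finite, hence $f$ is generically finite, so $\dim X_\eta = 0$ and $X_\eta = \mathrm{Spec}\,K(X)$: thus $K_{X_\eta} + D_\eta = 0$ (or rather the structure sheaf twisted by a trivial divisor), and $S^0_-(X_\eta, K_{X_\eta} + D_\eta)$ is just the one-dimensional space $H^0(X_\eta, \mathcal{O})$, which is manifestly nonzero and "birational" (a point maps birationally to a point). The same holds with $D$ replaced by $D_1$ or $D_2$. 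So all three conditions $S^0_-(X_\eta, K_{X_\eta} + D_\eta)$ birational and $S^0_-(X_\eta, K_{X_\eta} + (D_i)_\eta) \neq 0$ are automatic.

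Granting this, Theorem~\ref{thm:bir-criterion-irr}(iv) yields that for any $\mathcal{P}_{\alpha_0} \in \mathrm{Pic}^0(A)$ the space
\[
S^0_-\!\big(X,\ K_X + D + (K_X + \ulcorner D_1\urcorner) + (K_X + \ulcorner D_2\urcorner) + a^*\mathcal{P}_{\alpha_0}\big)
\]
is birational. Taking $\mathcal{P}_{\alpha_0}$ trivial and using $\ulcorner D_i \urcorner \leq K_X$, this divisor is of the form $K_X + D + K_X + K_X + (\text{effective}) = 4K_X + D + (\text{effective}) \leq 5K_X$ up to linear equivalence and after tensoring with the section of the effective difference (Proposition~\ref{prop:F-stable-section}(i)), so $S^0_-(X, K_X + 5K_X) \supseteq$ the image of a birational subsystem, hence is itself birational. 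The main obstacle — and the only place requiring care — is the bookkeeping in the first paragraph: one must arrange a single nef and big $\mathbb{Q}$-Cartier representative $D$ with $\ulcorner D \urcorner \leq K_X$ (equivalently $D$ "fits inside" $K_X$ with effective integral complement) and simultaneously $K_X + D + 2K_X + (\text{round-ups}) \leq 5K_X$; the rounding $\ulcorner D_i \urcorner$ could a priori exceed $D_i$ and push the total past $5K_X$, so one chooses $D$ with small enough coefficients (supported so that the round-ups are controlled) that the inclusion $K_X + \ulcorner D_i\urcorner \leq 2K_X$ genuinely holds, which is possible precisely because $K_X$ is big. Once $D$ is pinned down, the rest is the formal inclusion calculus of Section~\ref{sec:Frobenius stable sections}.
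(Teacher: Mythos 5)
Your proposal is correct and follows essentially the same route the paper intends: take a nef and big $\mathbb{Q}$-divisor $D=D_1=D_2\leq K_X$ (which exists since $K_X$ is big), observe that the generic fiber of the Stein factorization of the Albanese map is a point so the hypotheses of Theorem \ref{thm:bir-criterion-irr}(iv) on $X_\eta$ are vacuous, and then absorb the resulting divisor into $K_X+5K_X$ via Proposition \ref{prop:F-stable-section}(i) — noting that $\ulcorner D_i\urcorner\leq K_X$ is automatic because $K_X$ is integral. The only blemish is some garbled bookkeeping in the final count ($K_X+D+K_X+K_X$ written as $4K_X+D$), but the correct bound $K_X+D+(K_X+\ulcorner D_1\urcorner)+(K_X+\ulcorner D_2\urcorner)\leq 6K_X=K_X+5K_X$ is what you use, so the argument stands.
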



Before preceding with the proof, let us recall some notions and results about Fourier-Mukai transform and generic vanishing sheaves developed by Pareschi and Popa \cite{PP03}.

Let $A$ be an abelian variety of dimension $d$, $\hat{A}= \mathrm{Pic}^0(A)$ and $\mathcal{P}$ the Poincar\'{e} line bundle on
$A \times \hat{A}$. Let $p,q$ denote the projections from $A \times \hat{A}$ to $A, \hat{A}$ respectively. The \emph{Fourier-Mukai transform} $R\Phi_{\mathcal{P}}: D^b(A) \rightarrow D^b(\hat{A})$ w.r.t. $\mathcal{P}$ is defined as
$$R\Phi_{\mathcal{P}}(-) := Rq_*(Lp^*(-)\otimes \mathcal{P})$$
which is a right derived functor. For a coherent sheaf $\mathcal{F}$ on $A$, let
$$D_A(\mathcal{F})= R\mathcal{H}om(\mathcal{F}, \mathcal{O}_A[d])~\mathrm{and}~\widehat{R\Delta(\mathcal{F})} = R\Phi_{\mathcal{P}}(D_A(\mathcal{F})).$$
\smallskip

\begin{defn}\label{defgv}
Given a coherent sheaf $\mathcal{F}$ on $A$, its ~$i$-$th$ ~$cohomological ~support ~locus$ is defined as
$$V^i(\mathcal{F}): = \{\alpha \in \hat{A}| h^i(\mathcal{F} \otimes \mathcal{P}_\alpha) > 0\}$$
The number $gv(\mathcal{F}): = min_{i>0}\{\mathrm{codim}_{\hat{A}}V^i(\mathcal{F}) - i\}$ is called the \emph{generic vanishing index} of $\mathcal{F}$, and
we say $\mathcal{F}$ is a $GV~ sheaf$ (resp. $M$-$regular~ sheaf$) if $gv(\mathcal{F}) \geq 0$ (resp. $>0$). If $V^i(\mathcal{F}) = \emptyset$ for any $i>0$ then we say $\mathcal{F}$ is an \emph{$IT^0$ sheaf}.
We say $\mathcal{F}$ is $continuously~ globally~ generated$ (CGG) if the sum of the evaluation maps
$$ev_{U,\mathcal{F}} : ~\oplus_{\alpha \in U}H^0(\mathcal{F} \otimes \mathcal{P}_\alpha) \otimes \mathcal{P}_\alpha^{-1} \rightarrow \mathcal{F}$$
is surjective for any dense subset $U \subset \mathrm{Pic}^0(A)$.
\end{defn}


\begin{prop}\label{prop:gv-M-reg}
Let $\mathcal{F}$ be a coherent sheaf on $A$.

(i) The sheaf $\mathcal{F}$ is a GV sheaf if and only if $\mathrm{codim}_{\hat{A}}(\mathrm{Supp}R^i\Phi_{\mathcal{P}}(\mathcal{F})) \geq i$ for any $i>0$, and if and only if $\widehat{R\Delta(\mathcal{F})}$ is quasi-isomorphic to a coherent sheaf on $\hat{A}$.

(ii) The sheaf $\mathcal{F}$ is M-regular if and only if $\mathrm{codim}_{\hat{A}}(\mathrm{Supp}R^i\Phi_{\mathcal{P}}(\mathcal{F})) > i$ for any $i>0$, and if and only if $\widehat{R\Delta(\mathcal{F})}$ is quasi-isomorphic to a torsion free coherent sheaf on $\hat{A}$.
\end{prop}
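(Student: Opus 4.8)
The plan is to reduce everything to Mukai's duality theorem and a base-change/cohomology-and-base-change argument. Recall that Mukai's inversion theorem gives $R\Phi_{\mathcal{P}^\vee} \circ R\Phi_{\mathcal{P}} \cong (-1_A)^* [-d]$ (up to the usual normalization), so the functor $\widehat{R\Delta(-)} = R\Phi_{\mathcal{P}} \circ D_A(-)$ is an (anti-)equivalence intertwining the dualizing functors $D_A$ and $D_{\hat A}$: one has the key identity $R\Phi_{\mathcal{P}}(\mathcal{F}) \cong D_{\hat A}(\widehat{R\Delta(\mathcal{F})})$ (Grothendieck duality for the proper map $q$, together with $D_A \circ D_A \cong \mathrm{id}$ on a variety with dualizing sheaf $\mathcal{O}_A$). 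This is the device that converts a statement about the \emph{supports of the higher cohomology sheaves} $R^i\Phi_{\mathcal{P}}(\mathcal{F})$ into a statement about the \emph{homological dimension} of the single complex $\widehat{R\Delta(\mathcal{F})}$, i.e.\ about whether it is concentrated in one degree (and, in the M-regular case, torsion-free there).

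First I would establish the equivalence of the two numerical conditions in each of (i) and (ii), namely
$$V^i(\mathcal{F}) \;=\; \mathrm{Supp}\, R^i\Phi_{\mathcal{P}}(\mathcal{F}) \quad\text{for } i > 0 \text{ (up to lower-dimensional loci),}$$
by cohomology and base change: the fibre of $R^i\Phi_{\mathcal{P}}(\mathcal{F})$ at a point $\alpha \in \hat A$ governs $h^i(A, \mathcal{F}\otimes \mathcal{P}_\alpha)$, so $\alpha \in V^i(\mathcal{F})$ forces $\alpha \in \mathrm{Supp}\,R^i\Phi_{\mathcal{P}}(\mathcal{F})$, and conversely on the open set where the relevant sheaves are locally free the two loci agree; a descending induction on $i$ (from the top, using that $R^i\Phi_{\mathcal{P}}(\mathcal{F})=0$ for $i>d$) then upgrades this to an honest equality of the closed loci. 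Hence $gv(\mathcal{F}) \ge 0$ (resp.\ $>0$) is the same as $\mathrm{codim}_{\hat A}\,\mathrm{Supp}\,R^i\Phi_{\mathcal{P}}(\mathcal{F}) \ge i$ (resp.\ $> i$) for all $i>0$.

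Next I would translate the support condition on $R^\bullet\Phi_{\mathcal{P}}(\mathcal{F})$ into the structure of $\widehat{R\Delta(\mathcal{F})}$. Write $\mathcal{G}^\bullet = \widehat{R\Delta(\mathcal{F})}$; by the duality identity above, $R^i\Phi_{\mathcal{P}}(\mathcal{F}) \cong \mathcal{E}xt^i_{\hat A}(\mathcal{G}^\bullet, \mathcal{O}_{\hat A})$ (hypercohomology sheaves of the dual complex). A standard local-algebra fact (the spectral sequence for $R\mathcal{H}om$, plus the fact that on the $d$-dimensional regular variety $\hat A$ a coherent sheaf $\mathcal{M}$ has $\mathcal{E}xt^j(\mathcal{M},\mathcal{O})$ supported in codimension $\ge j$, with strict inequality for $j>0$ precisely when $\mathcal{M}$ is torsion-free) then yields: $\mathrm{codim}\,\mathrm{Supp}\,\mathcal{E}xt^i(\mathcal{G}^\bullet,\mathcal{O}) \ge i$ for all $i>0$ iff $\mathcal{G}^\bullet$ has cohomology only in degree $0$, i.e.\ is (quasi-isomorphic to) a sheaf; and the strict inequality for all $i>0$ holds iff that degree-$0$ cohomology sheaf is in addition torsion-free. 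Combining this with the first step gives both ``if and only if'' chains in (i) and (ii).

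The main obstacle is purely the bookkeeping of the grading/duality conventions: getting the degree shifts and the variance right in the identity $R\Phi_{\mathcal{P}}(\mathcal{F}) \cong D_{\hat A}(\widehat{R\Delta(\mathcal{F})})$, and being careful that the equality of cohomological support loci with supports of $R^i\Phi_{\mathcal{P}}$ holds on the nose rather than just generically (this is where the descending induction on $i$ and the base-change exactness are needed). The homological-algebra inputs — the Auslander–Buchsbaum / local-duality characterization of torsion-freeness via $\mathcal{E}xt$ codimensions, and the hyperext spectral sequence — are standard and I would invoke them as such; in characteristic $p$ nothing changes, since the whole argument only uses Grothendieck–Serre duality for the smooth proper morphisms $p,q$ and cohomology-and-base-change, all of which are characteristic-free. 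This is essentially \cite[Theorem 2.2]{PP03} and its proof, which I would cite, spelling out only the steps needed in the sequel.
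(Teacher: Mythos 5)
The paper gives no proof of its own here: it simply refers the reader to \cite[Sec.\ 2]{PP11}, and your sketch is a faithful outline of exactly that standard Pareschi--Popa argument (Grothendieck duality identifying $R\Phi_{\mathcal{P}}(\mathcal{F})$ with the dual of $\widehat{R\Delta(\mathcal{F})}$, base change to compare $V^i(\mathcal{F})$ with $\mathrm{Supp}\,R^i\Phi_{\mathcal{P}}(\mathcal{F})$, and the $\mathcal{E}xt$-codimension criterion for concentration in one degree and for torsion-freeness). So your proposal is correct and takes essentially the same route as the paper, which is to invoke Pareschi--Popa.
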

\begin{proof}
We refer the reader to \cite[Sec.2]{PP11} for the proof.
\end{proof}

\begin{lem}[\cite{BLNP}, Lemma 4.6]\label{lem:coker}
Let $\mathcal{F}$ be a GV-sheaf on $A$. Let $L$ be an ample line bundle on $A$. Then,
for all sufficiently large $n \in \mathbb{N}$, and for any subset $T \subseteq \mathrm{Pic}^0(A)$, the Fourier-Mukai transform $\Phi_{\mathcal{P}}$
induces a canonical isomorphism
$$H^0(A, \mathrm{coker}~ev_{T,\mathcal{F}}\otimes L^n) \cong (\mathrm{ker} ~\psi_{T,\mathcal{F}})^{\vee},$$
where $\psi_{T,\mathcal{F}}$ is the natural evaluation map defined as follows
$$\psi_{T,\mathcal{F}}: \mathrm{Hom}(\widehat{L^n},\widehat{R\Delta(\mathcal{F})}) \to \prod_{\alpha \in T}\mathcal{H}om(\widehat{L^n},\widehat{R\Delta(\mathcal{F})}) \otimes k(\alpha).$$
\end{lem}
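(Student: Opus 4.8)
The plan is to run the standard Fourier--Mukai/Grothendieck--duality argument. Write $\mathcal{M}=\bigoplus_{\alpha\in T}H^0(A,\mathcal{F}\otimes\mathcal{P}_\alpha)\otimes\mathcal{P}_\alpha^{-1}$ and $\mathcal{G}=\mathrm{coker}(ev_{T,\mathcal{F}})$, and let $E^\bullet=[\,\mathcal{M}\xrightarrow{ev_{T,\mathcal{F}}}\mathcal{F}\,]$ be this two-term complex, placed in degrees $-1,0$, so that $\mathcal{H}^{-1}(E^\bullet)=\ker(ev_{T,\mathcal{F}})$ and $\mathcal{H}^0(E^\bullet)=\mathcal{G}$. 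First I would check that for $n\gg 0$ one has $H^0(A,\mathcal{G}\otimes L^n)\cong\mathbb{H}^0(A,E^\bullet\otimes L^n)$: by Serre vanishing $H^1(A,\ker(ev_{T,\mathcal{F}})\otimes L^n)=H^2(A,\ker(ev_{T,\mathcal{F}})\otimes L^n)=0$ for $n\gg 0$, and then the truncation triangle with vertices $\ker(ev_{T,\mathcal{F}})[1]$, $E^\bullet$ and $\mathcal{G}$ gives the identification. This is the \emph{only} place where ``$n$ sufficiently large'' is used.

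Next I would dualize on $A$ and transport everything to $\hat A$. Grothendieck--Serre duality on the abelian variety $A$ (whose dualizing complex is $\mathcal{O}_A[\dim A]$) combined with the Fourier--Mukai equivalence $R\Phi_{\mathcal{P}}$ gives, for every bounded complex $\mathcal{C}^\bullet$ on $A$,
$$\mathbb{H}^0(A,\mathcal{C}^\bullet\otimes L^n)^{\vee}\;\cong\;\mathbb{H}^0(A,D_A(\mathcal{C}^\bullet)\otimes L^{-n})\;=\;\mathrm{Hom}_{D^b(A)}(L^n,D_A(\mathcal{C}^\bullet))\;\cong\;\mathrm{Hom}_{D^b(\hat A)}(\widehat{L^n},\widehat{R\Delta(\mathcal{C}^\bullet)}),$$
where $\widehat{L^n}=R\Phi_{\mathcal{P}}(L^n)$, which is a vector bundle on $\hat A$ since $L$ is ample, so $L^n$ is an $IT^0$ sheaf. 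Applying this with $\mathcal{C}^\bullet=E^\bullet$ reduces the claim, via the previous paragraph, to identifying $\mathrm{Hom}_{D^b(\hat A)}(\widehat{L^n},\widehat{R\Delta(E^\bullet)})$ with $\ker\psi_{T,\mathcal{F}}$.

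The key input is Mukai's computation that $R\Phi_{\mathcal{P}}(\mathcal{P}_\alpha^{-1})$ is, up to a shift, the structure sheaf of the reduced point $\{\alpha\}\subset\hat A$; since $D_A$ and $R\Phi_{\mathcal{P}}=Rq_*(-\otimes\mathcal{P})$ convert a direct sum into a product, this gives $\widehat{R\Delta(\mathcal{M})}\cong\prod_{\alpha\in T}H^0(A,\mathcal{F}\otimes\mathcal{P}_\alpha)^{\vee}\otimes k(\alpha)$, a sheaf. As $\mathcal{F}$ is a $GV$-sheaf, $\widehat{R\Delta(\mathcal{F})}$ is an honest coherent sheaf (Proposition \ref{prop:gv-M-reg}), and cohomology-and-base-change identifies its fibre at $\alpha$ with $H^0(A,\mathcal{F}\otimes\mathcal{P}_\alpha)^{\vee}$; under these identifications the Fourier--Mukai transform of $ev_{T,\mathcal{F}}$ is exactly the ``evaluation at the points of $T$'' morphism $\widehat{R\Delta(\mathcal{F})}\to\prod_{\alpha\in T}\widehat{R\Delta(\mathcal{F})}\otimes k(\alpha)$, so $\widehat{R\Delta(E^\bullet)}$ is the two-term complex $[\,\widehat{R\Delta(\mathcal{F})}\to\prod_{\alpha\in T}\widehat{R\Delta(\mathcal{F})}\otimes k(\alpha)\,]$ in degrees $0,1$. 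Applying $\mathrm{Hom}_{D^b(\hat A)}(\widehat{L^n},-)$ to the associated triangle, and using that $\mathrm{Hom}^{-1}_{D^b(\hat A)}(\widehat{L^n},\widehat{R\Delta(\mathcal{M})})=0$ since $\widehat{R\Delta(\mathcal{M})}$ is a sheaf and $\widehat{L^n}$ is locally free, one obtains $\mathrm{Hom}_{D^b(\hat A)}(\widehat{L^n},\widehat{R\Delta(E^\bullet)})\cong\ker\psi_{T,\mathcal{F}}$: indeed $\mathrm{Hom}(\widehat{L^n},\prod_{\alpha}\widehat{R\Delta(\mathcal{F})}\otimes k(\alpha))=\prod_{\alpha}\mathcal{H}om(\widehat{L^n},\widehat{R\Delta(\mathcal{F})})\otimes k(\alpha)$ as $\widehat{L^n}$ is locally free, and the induced map is precisely $\psi_{T,\mathcal{F}}$. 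Combining the three paragraphs, $H^0(A,\mathcal{G}\otimes L^n)^{\vee}\cong\ker\psi_{T,\mathcal{F}}$, hence $H^0(A,\mathcal{G}\otimes L^n)\cong(\ker\psi_{T,\mathcal{F}})^{\vee}$ since $\ker\psi_{T,\mathcal{F}}$ is finite-dimensional.

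I expect the main difficulty to be bookkeeping rather than conceptual. First, one must absorb the shift $[\dim A]$ and the involution $(-1)_A^{*}$ produced by Grothendieck duality and Mukai's inversion theorem into the normalization of $\mathcal{P}$ (equivalently, of $\psi_{T,\mathcal{F}}$), so that the evaluation maps on the two sides genuinely agree point by point. Second, one must make the manipulations with the possibly infinite index set $T$ legitimate: the contravariant functor $D_A$ and pushforward along $q$ turn the infinite direct sum defining $\mathcal{M}$ into a \emph{product} over $T$ --- which is exactly why the target of $\psi_{T,\mathcal{F}}$ is a product --- and one has to arrange that the Serre vanishing needed in the first paragraph holds with a threshold on $n$ depending only on $\mathcal{F}$ and $L$, using that the line bundles $\mathcal{P}_\alpha^{-1}\otimes L^n$ ($\alpha\in\mathrm{Pic}^0(A)$) are numerically equivalent and that the family $\{\mathcal{F}\otimes\mathcal{P}_\alpha\}_{\alpha}$ is bounded.
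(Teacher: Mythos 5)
The paper does not prove this lemma: it is imported verbatim from [BLNP, Lemma 4.6] and used as a black box, so there is no internal proof to compare against. Your sketch is the standard Fourier--Mukai/Grothendieck-duality computation, which is also how the cited source argues; your second and third paragraphs are sound modulo the normalization issues you yourself flag (in particular, $\mathrm{Hom}_{D^b(\hat A)}(\widehat{L^n},\widehat{R\Delta(\mathcal{C})})\cong H^0(A,\mathcal{C}\otimes L^n)^{\vee}$ holds for \emph{every} coherent $\mathcal{C}$ and every $n$, since $\omega_A\cong\mathcal{O}_A$, so no largeness of $n$ is hidden there).

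The step that fails as written is the first one. When $T$ is infinite, $\ker(ev_{T,\mathcal{F}})$ is a quasi-coherent subsheaf of the infinite direct sum $\mathcal{M}$ and is not coherent, so ``Serre vanishing'' does not apply to it; and the fix you propose at the end (boundedness of the family $\{\mathcal{F}\otimes\mathcal{P}_\alpha\}_\alpha$) controls the summands of $\mathcal{M}$ and quotients of $\mathcal{F}$, not this kernel. The repair goes through the coherent image: set $I=\mathrm{Im}(ev_{T,\mathcal{F}})\subseteq\mathcal{F}$ and use $0\to\ker(ev_{T,\mathcal{F}})\to\mathcal{M}\to I\to 0$. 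Since every $\mathcal{P}_\alpha^{-1}\otimes L^n$ is ample on an abelian variety, $H^{i}(A,\mathcal{M}\otimes L^n)=0$ for all $i>0$ and all $n\geq 1$; hence $H^2(\ker(ev_{T,\mathcal{F}})\otimes L^n)\cong H^1(I\otimes L^n)$, which vanishes for $n\gg0$ by Serre vanishing applied to the coherent sheaf $I$, while $H^1(\ker(ev_{T,\mathcal{F}})\otimes L^n)$ is the cokernel of $H^0(\mathcal{M}\otimes L^n)\to H^0(I\otimes L^n)$, whose vanishing you get by choosing (Noetherianity) a finite $T_0\subseteq T$ with $\mathrm{Im}(ev_{T_0,\mathcal{F}})=I$ and applying Serre vanishing to the coherent $\ker(ev_{T_0,\mathcal{F}})$. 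Note that this makes the threshold for $n$ depend on $T$ through $I$, whereas the statement asserts a threshold depending only on $\mathcal{F}$ and $L$; restoring that uniformity needs a further boundedness argument for the family of subsheaves $\{\mathrm{Im}(ev_{T,\mathcal{F}})\}_{T}$ of $\mathcal{F}$. In the paper's only application (Corollary \ref{cor:generation}) the set $T$ is fixed before $n$ is chosen, so the weaker quantification suffices there, but as a proof of the lemma as stated you should either supply that boundedness or acknowledge the dependence on $T$.
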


\begin{cor}\label{cor:generation}
Let $\mathcal{F}$ be a coherent sheaf on $A$.

(i) If $\mathcal{F}$ is GV then there exist $\alpha_1, \alpha_2, \cdots, \alpha_m \in \hat{A}$ such that the evaluation homomorphism
$$\bigoplus_{i=1}^m H^0(A, \mathcal{F}\otimes \mathcal{P}_{\alpha_i}) \otimes \mathcal{P}_{\alpha_i}^{-1} \to \mathcal{F}$$
is surjective.

(ii) If $\mathcal{F}$ is $M$-regular, then it is CGG, in particular for any dense subset $V \subseteq \hat{A}$, there exist $\alpha_1, \alpha_2, \cdots, \alpha_m \in V$ such that the evaluation homomorphism
$$\bigoplus_{i=1}^m H^0(A, \mathcal{F}\otimes \mathcal{P}_{\alpha_i}) \otimes \mathcal{P}_{\alpha_i}^{-1} \to \mathcal{F}$$
is surjective.
\end{cor}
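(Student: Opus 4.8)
The plan is to deduce both parts uniformly from Lemma~\ref{lem:coker} and Proposition~\ref{prop:gv-M-reg}, in each case reducing the desired surjectivity to the vanishing of a cokernel sheaf. I would fix an ample line bundle $L$ on $A$, set $\mathcal{N}:=\widehat{R\Delta(\mathcal{F})}$, and for a subset $T\subseteq\hat{A}$ write $ev_{T,\mathcal{F}}\colon \bigoplus_{\alpha\in T} H^0(A,\mathcal{F}\otimes\mathcal{P}_\alpha)\otimes\mathcal{P}_\alpha^{-1}\to\mathcal{F}$ for the total evaluation map, so that what has to be proved is precisely $\mathrm{coker}\,ev_{T,\mathcal{F}}=0$ for a suitable \emph{finite} $T$. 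By Lemma~\ref{lem:coker}, for all $n\gg0$ (uniformly in $T$) there is an isomorphism $H^0(A,\mathrm{coker}\,ev_{T,\mathcal{F}}\otimes L^n)\cong(\ker\psi_{T,\mathcal{F}})^{\vee}$, where $\psi_{T,\mathcal{F}}$ is the evaluation-at-$T$ map $H^0(\hat{A},\mathcal{H}_n)\to\prod_{\alpha\in T}\mathcal{H}_n\otimes k(\alpha)$ of the coherent sheaf $\mathcal{H}_n:=\mathcal{H}om(\widehat{L^n},\mathcal{N})$ on $\hat{A}$. The three common steps would then be: (1) choose $T$ so that $\psi_{T,\mathcal{F}}$ is injective; (2) conclude that $H^0(A,\mathrm{coker}\,ev_{T,\mathcal{F}}\otimes L^n)=0$ for all $n\gg0$, hence that $\mathrm{coker}\,ev_{T,\mathcal{F}}=0$ since $L$ is ample and $A$ projective; (3) pass to a finite $T'=\{\alpha_1,\dots,\alpha_m\}\subseteq T$, using that the images of the maps $ev_{T',\mathcal{F}}$, as $T'$ ranges over finite subsets of $T$, form a directed family of coherent subsheaves of $\mathcal{F}$, which by Noetherianity has a maximal member, necessarily equal to $\mathrm{im}\,ev_{T,\mathcal{F}}=\mathcal{F}$.

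For part (ii) I would carry out step (1) as follows. By Proposition~\ref{prop:gv-M-reg}(ii), $M$-regularity of $\mathcal{F}$ means that $\mathcal{N}$ is a torsion-free coherent sheaf on the integral variety $\hat{A}$; then $\mathcal{H}_n=\mathcal{H}om(\widehat{L^n},\mathcal{N})$ is torsion-free as well (a torsion homomorphism $\widehat{L^n}\to\mathcal{N}$ would carry every local section into the torsion of $\mathcal{N}$, hence to $0$). A nonzero global section of a torsion-free coherent sheaf on $\hat{A}$ is generically nonzero, so it is nonzero in the fibre at some point of any prescribed dense subset $U\subseteq\hat{A}$; thus $\psi_{U,\mathcal{F}}$ is injective for \emph{every} dense $U$, and steps (2)--(3) give $\mathrm{coker}\,ev_{U,\mathcal{F}}=0$ for every dense $U$. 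This is exactly the assertion that $\mathcal{F}$ is CGG, and the final clause of (ii) is step (3) applied to the given dense $V$.

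Part (i) is where the argument must be more careful. Under only the GV hypothesis, Proposition~\ref{prop:gv-M-reg}(i) merely gives that $\mathcal{N}$, hence $\mathcal{H}_n$, is coherent; it may well have torsion, so step (1) cannot be run for an arbitrary dense $U$ — which is consistent with the well-known fact that a GV sheaf need not be CGG. Instead I would run the argument with $T=\hat{A}$ itself: a nonzero global section of \emph{any} coherent sheaf on $\hat{A}$ is nonzero in the fibre $\mathcal{H}_n\otimes k(\alpha)$ at some closed point $\alpha$ (apply Nakayama's lemma at a generic point of the support of the coherent subsheaf generated by that section, after a harmless Noetherian reduction to the case where this subsheaf has full support). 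Hence $\psi_{\hat{A},\mathcal{F}}$ is injective, step (2) gives $\mathrm{coker}\,ev_{\hat{A},\mathcal{F}}=0$, and step (3) extracts a finite set $\{\alpha_1,\dots,\alpha_m\}\subseteq\hat{A}$ with $\bigoplus_i H^0(A,\mathcal{F}\otimes\mathcal{P}_{\alpha_i})\otimes\mathcal{P}_{\alpha_i}^{-1}\twoheadrightarrow\mathcal{F}$.

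The computations here are all routine, and I expect the main obstacle to be simply getting the $M$-regular versus merely-GV dichotomy right in step (1): one must resist proving (i) by a continuous-global-generation argument (which fails for GV sheaves whose transform has torsion) and instead observe that, although $T$ cannot be shrunk to an arbitrary dense subset, the cokernel-vanishing argument still runs with $T$ equal to all of $\hat{A}$, after which Noetherianity recovers generation by finitely many explicit twists. A secondary point to keep track of is that the ``sufficiently large $n$'' in Lemma~\ref{lem:coker} must be taken uniformly in $T$ — in particular valid for the infinite set $T=\hat{A}$ — which is indeed how that lemma is stated.
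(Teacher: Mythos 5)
Your proposal follows the paper's proof essentially verbatim: both parts are deduced from Lemma~\ref{lem:coker} by showing $\ker\psi_{T,\mathcal{F}}=0$ for $T=\hat{A}$ in part (i), resp.\ for $T=V$ dense in part (ii) using the torsion-freeness of $\widehat{R\Delta(\mathcal{F})}$ from Proposition~\ref{prop:gv-M-reg}, then concluding that $\mathrm{coker}\,ev_{T,\mathcal{F}}\otimes L^n$ has no global sections and hence vanishes, and finally extracting finitely many $\alpha_i$ by Noetherian induction. One caveat on part (i): your auxiliary claim that a nonzero global section of an \emph{arbitrary} coherent sheaf on $\hat{A}$ is nonzero in the fibre at some closed point is false in general — for instance the class of a local uniformizer in $H^0(\hat{A},\mathcal{O}_{\hat{A}}/\mathcal{I}_x^{2})$ is a nonzero section of a coherent sheaf whose image in every fibre is zero — and the Nakayama argument you sketch only yields nonvanishing in the fibre of the subsheaf generated by the section, which need not inject into the fibre of the ambient sheaf. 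So the injectivity of $\psi_{\hat{A},\mathcal{F}}$ must be extracted from the specific shape of $\mathcal{H}om(\widehat{L^n},\widehat{R\Delta(\mathcal{F})})$ (the paper invokes the local freeness of $\widehat{L^n}$ here, and its own justification is comparably terse) rather than from a general fact about coherent sheaves; this is a local repair, not a change of strategy.
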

\begin{proof}
(i) Take an ample line bundle $L$ on $A$ and a sufficiently large number $n$ such that $\mathrm{coker}~ev_{\hat{A},\mathcal{F}}\otimes L^n$ is globally generated. As $L^n$ is an $IT^0$ sheaf, $\widehat{L^n}$ is a locally free sheaf on $\hat{A}$. Hence in Lemma \ref{lem:coker}, if setting $T=\hat{A}$, then $\mathrm{ker} ~\psi_{\hat{A},\mathcal{F}} =0$. This implies $H^0(A, (\mathrm{coker}~ev_{\hat{A},\mathcal{F}})\otimes L^n) =0$ by Lemma \ref{lem:coker}, hence $(\mathrm{coker}~ev_{\hat{A},\mathcal{F}})\otimes L^n = 0$, in other words,
$$ev_{\hat{A},\mathcal{F}}: \oplus_{\alpha \in \hat{A}}H^0(A, \mathcal{F} \otimes \mathcal{P}_\alpha) \otimes (\mathcal{P}_\alpha^{-1}) \rightarrow \mathcal{F}$$
is surjective. By Noetherian induction, we can find finitely many $\alpha_1, \alpha_2, \cdots, \alpha_m \in \hat{A}$ satisfying the requirement of (i).

(ii) If $\mathcal{F}$ is $M$-regular, then $\widehat{R\Delta(\mathcal{F})}$ is torsion free by Proposition \ref{prop:gv-M-reg}. Applying  Lemma \ref{lem:coker} again by setting $T=V$, since $T$ is dense, we see that $\mathrm{ker} ~\psi_{T,\mathcal{F}} =0$. We can prove (ii) by Noetherian induction as in (i).
\end{proof}

\begin{prop}\label{prop:tensor-of-gv} Let $\mathcal{F}$ and $\mathcal{E}$ be two coherent sheaves on $A$.
If $\mathcal{F}$ is GV and $\mathcal{E}$ is CGG, then the tensor product $\mathcal{F} \otimes \mathcal{E}$ is CGG.

\end{prop}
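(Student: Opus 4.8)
The plan is to unwind both hypotheses to the statement that the relevant sheaves are generated by finitely many Poincar\'e twists, and then to keep careful track of the twisting parameters. Fix a non-empty (hence dense) open subset $U\subseteq\hat{A}$; we must show that $ev_{U,\mathcal{F}\otimes\mathcal{E}}$ is surjective. First I would use that $\mathcal{F}$ is GV: by Corollary \ref{cor:generation}(i) there are $\gamma_1,\dots,\gamma_m\in\hat{A}$ such that $\bigoplus_{i=1}^m H^0(\mathcal{F}\otimes\mathcal{P}_{\gamma_i})\otimes\mathcal{P}_{\gamma_i}^{-1}\to\mathcal{F}$ is surjective. Since $\hat{A}$ is irreducible, each translate $U-\gamma_i$ is again a non-empty open set, so $V:=\bigcap_{i=1}^m(U-\gamma_i)$ is a non-empty open, hence dense, subset of $\hat{A}$. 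As $\mathcal{E}$ is CGG, the map $ev_{V,\mathcal{E}}$ is surjective; by the usual Noetherian induction (as in the proof of Corollary \ref{cor:generation}) surjectivity is already attained using finitely many points $\beta_1,\dots,\beta_n\in V$, i.e.\ there is a surjection $\phi\colon\bigoplus_{j=1}^n W_j\otimes\mathcal{P}_{\beta_j}^{-1}\to\mathcal{E}$ with $W_j:=H^0(\mathcal{E}\otimes\mathcal{P}_{\beta_j})$.

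Next I would carry out the twisting computation. Tensoring $\phi$ with $\mathcal{F}$ preserves surjectivity, so it suffices to show that for each $j$ the image of $W_j\otimes(\mathcal{F}\otimes\mathcal{P}_{\beta_j}^{-1})\to\mathcal{F}\otimes\mathcal{E}$ lies in $\mathrm{Im}(ev_{U,\mathcal{F}\otimes\mathcal{E}})$. Since $\beta_j\in V$ we have $\gamma_i+\beta_j\in U$ for all $i$. Tensoring the generation statement for $\mathcal{F}$ by the line bundle $\mathcal{P}_{\beta_j}^{-1}$ and using $\mathcal{P}_{\gamma_i}^{-1}\otimes\mathcal{P}_{\beta_j}^{-1}\cong\mathcal{P}_{\gamma_i+\beta_j}^{-1}$ shows that $\mathcal{F}\otimes\mathcal{P}_{\beta_j}^{-1}$ is generated by the images of $\mathcal{P}_{\gamma_i+\beta_j}^{-1}\otimes H^0\!\big((\mathcal{F}\otimes\mathcal{P}_{\beta_j}^{-1})\otimes\mathcal{P}_{\gamma_i+\beta_j}\big)$, all of whose parameters $\gamma_i+\beta_j$ lie in $U$. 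Now for each $w\in W_j$, multiplication of sections gives a map $H^0\!\big((\mathcal{F}\otimes\mathcal{P}_{\beta_j}^{-1})\otimes\mathcal{P}_\alpha\big)\to H^0\!\big((\mathcal{F}\otimes\mathcal{E})\otimes\mathcal{P}_\alpha\big)$, and the composite $\mathcal{P}_\alpha^{-1}\otimes H^0\!\big((\mathcal{F}\otimes\mathcal{P}_{\beta_j}^{-1})\otimes\mathcal{P}_\alpha\big)\xrightarrow{ev}\mathcal{F}\otimes\mathcal{P}_{\beta_j}^{-1}\xrightarrow{\mathrm{id}_{\mathcal{F}}\otimes w}\mathcal{F}\otimes\mathcal{E}$ factors through $ev_{\{\alpha\},\mathcal{F}\otimes\mathcal{E}}$. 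Taking $\alpha=\gamma_i+\beta_j\in U$ and summing over $i$, the image of $\mathcal{F}\otimes\mathcal{P}_{\beta_j}^{-1}\xrightarrow{\mathrm{id}_{\mathcal{F}}\otimes w}\mathcal{F}\otimes\mathcal{E}$ lies in $\mathrm{Im}(ev_{U,\mathcal{F}\otimes\mathcal{E}})$; summing over $w$ in a basis of $W_j$ and over $j$, and using that $\phi\otimes\mathrm{id}_{\mathcal{F}}$ is surjective, we conclude that $ev_{U,\mathcal{F}\otimes\mathcal{E}}$ is surjective, hence that $\mathcal{F}\otimes\mathcal{E}$ is CGG.

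The step I expect to be the main obstacle --- although it is bookkeeping rather than a genuine difficulty --- is the compatibility underlying the last display: one must check, keeping all Poincar\'e twists consistent, that for $s\in H^0(\mathcal{F}\otimes\mathcal{P}_{\gamma_i})$ and $w\in W_j$ the product $s\cdot w$, regarded via $\mathcal{P}_{\gamma_i+\beta_j}\cong\mathcal{P}_{\gamma_i}\otimes\mathcal{P}_{\beta_j}$ as an element of $H^0\!\big((\mathcal{F}\otimes\mathcal{E})\otimes\mathcal{P}_{\gamma_i+\beta_j}\big)$, induces on $\mathcal{P}_{\gamma_i+\beta_j}^{-1}$ exactly the composite of $s\otimes\mathrm{id}_{\mathcal{P}_{\beta_j}^{-1}}\colon\mathcal{P}_{\gamma_i}^{-1}\otimes\mathcal{P}_{\beta_j}^{-1}\to\mathcal{F}\otimes\mathcal{P}_{\beta_j}^{-1}$ with $\mathrm{id}_{\mathcal{F}}\otimes w\colon\mathcal{F}\otimes\mathcal{P}_{\beta_j}^{-1}\to\mathcal{F}\otimes\mathcal{E}$; granting this, the various images add up to all of $\mathcal{F}\otimes\mathcal{E}$ by a routine diagram chase. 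It is also worth pointing out that CGG of $\mathcal{E}$ (and not merely the GV property) is used in an essential way at exactly one point: it is what permits the finitely many $\beta_j$ to be chosen inside the prescribed open set $V$, which a GV hypothesis on $\mathcal{E}$ would not supply.
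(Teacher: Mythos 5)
Your argument follows the same route as the paper's proof, which is essentially your computation compressed into two lines: by Corollary \ref{cor:generation}(i), $\mathcal{F}$ is a quotient of a finite direct sum of Poincar\'e twists $\mathcal{P}_{\gamma_i}^{-1}$ (with multiplicities $\dim H^0(\mathcal{F}\otimes\mathcal{P}_{\gamma_i})$), hence $\mathcal{F}\otimes\mathcal{E}$ is a quotient of $\bigoplus_i(\mathcal{E}\otimes\mathcal{P}_{\gamma_i}^{-1})^{\oplus\dim H^0(\mathcal{F}\otimes\mathcal{P}_{\gamma_i})}$, and this sheaf is CGG. However, there is one genuine defect in your write-up. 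Definition \ref{defgv} requires $ev_{U,\mathcal{F}\otimes\mathcal{E}}$ to be surjective for \emph{every dense subset} $U\subseteq\hat{A}$, not only for open ones, and a dense subset need not contain any nonempty open set. Your opening sentence restricts to open $U$, and this restriction is then used essentially: the set $V=\bigcap_{i=1}^m(U-\gamma_i)$ is nonempty and dense because it is a finite intersection of nonempty \emph{open} sets in an irreducible space; for arbitrary dense subsets a finite intersection of translates can perfectly well be empty. So as written you prove a strictly weaker property than CGG in the sense of the paper.

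The fix is easy and stays entirely within your framework: do not intersect. For each $i$ separately, $U-\gamma_i$ is dense because translation is a homeomorphism of $\hat{A}$, so CGG of $\mathcal{E}$ furnishes finitely many $\beta_{i,1},\dots,\beta_{i,n_i}\in U-\gamma_i$ with $\bigoplus_j H^0(\mathcal{E}\otimes\mathcal{P}_{\beta_{i,j}})\otimes\mathcal{P}_{\beta_{i,j}}^{-1}\to\mathcal{E}$ surjective; running your multiplication-of-sections computation with these $\beta_{i,j}$ keeps all twisting parameters $\gamma_i+\beta_{i,j}$ inside $U$ and yields surjectivity of $ev_{U,\mathcal{F}\otimes\mathcal{E}}$ for every dense $U$. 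Equivalently (and this is how the paper phrases it) one observes that each $\mathcal{E}\otimes\mathcal{P}_{\gamma_i}^{-1}$ is CGG, that a finite direct sum of CGG sheaves is CGG, and that a quotient of a CGG sheaf is CGG. The remaining bookkeeping in your proposal --- the compatibility of section multiplication with the evaluation maps and the identification $\mathcal{P}_{\gamma_i}^{-1}\otimes\mathcal{P}_{\beta_j}^{-1}\cong\mathcal{P}_{\gamma_i+\beta_j}^{-1}$ --- is correct, as is your closing remark about where CGG (rather than GV) of $\mathcal{E}$ is genuinely needed.
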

\begin{proof}
By Corollary \ref{cor:generation}, $\mathcal{F}$ is the quotient of $\bigoplus_{i=1}^m \mathcal{P}_{\alpha_i}$ for some $\alpha_1, \alpha_2, \cdots, \alpha_m \in \hat{A}$. Since $\bigoplus_{i=1}^m \mathcal{E}\otimes\mathcal{P}_{\alpha_i}$ is CGG, the tensor product $\mathcal{F} \otimes \mathcal{E}$, as the quotient of this sheaf, is CGG too.
\end{proof}

\begin{proof}[Proof of Theorem \ref{thm:bir-criterion-irr}]
Since $D$ is nef and big, by Proposition \ref{prop:F-stable-section} (iii) we can take $\Delta \geq 0$ such that
\begin{itemize}
\item[(a)] $S_{-}^0(X_{\eta}, K_{X_{\eta}} + D|_{X_{\eta}}) \subseteq S_{\Delta}^0(X_{\eta}, K_{X_{\eta}} + D|_{X_{\eta}})$ and
\item[(b)] $D- \Delta$ is ample, $p^g(D-\Delta)$ is integral for some $g$, and $S_{\Delta}^0(X, K_{X} + D + a^*\mathcal{P}_{\alpha})\subseteq S_{-}^0(X, K_{X} + D + a^*\mathcal{P}_{\alpha})$ for any $\alpha \in \mathrm{Pic}^0(A)$.
\end{itemize}
Let
$$\mathcal{F}^e= f_*F^{e}_*\mathcal{O}_X(K_X + \ulcorner p^e(D-\Delta)\urcorner)~\mathrm{and}~\mathcal{V}^e= \mathrm{Im}(f_*Tr^e:\mathcal{F}^e \to f_*\mathcal{O}_X(K_X + \ulcorner D\urcorner)).$$
By (a) we always have
\begin{itemize}
\item[(c)] $S^0_{-}(X_{\eta}, K_{X_{\eta}} + D_{\eta}) \subseteq \mathcal{V}^e \otimes k(\eta)$.
\end{itemize}
Let $\pi: Y \to A$ be the natural morphism such that $a=\pi\circ f$, which is finite. Applying  $R(\pi\circ f)_*= Ra_*$ to $F^{e}_*\mathcal{O}_X(K_X + \ulcorner p^e(D-\Delta)\urcorner)$ and considering the induced Lerray spectral sequence, by Fujita vanishing (Theorem \ref{thm:var-fujita-vanishing}), we can show that for sufficiently large $e$, $\pi_*\mathcal{F}^e$ is an $IT^0$ sheaf.
From now on  we fix a sufficiently divisible integer $e >0$ and assume that
\begin{itemize}
\item[(d)] $S_{\Delta}^0(X, K_{X} + D + a^*\mathcal{P}_{\alpha})=S_{\Delta}^{e}(X, K_{X} + D + a^*\mathcal{P}_{\alpha})$ for any $\mathcal{P}_{\alpha} \in \mathrm{Pic}^0(A)$, which is reasonable by Proposition \ref{prop:F-stable-section} (iii); and
\item[(e)] $\pi_*\mathcal{F}^e$ is an $IT^0$ sheaf.
\end{itemize}


(i) Assume $S^0_{-}(X_{\eta}, K_{X_{\eta}} + D_{\eta})\neq 0$. The assertion (c) implies that $\mathrm{rank}~\mathcal{V}^e >0$. And by (e), $\pi_*\mathcal{F}^e$ is CGG, so is $\pi_*\mathcal{V}^e$, in particular, for general $\alpha \in \hat{A}$ we have $H^0(A, \pi_*\mathcal{V}^e\otimes \mathcal{P}_{\alpha}) \neq 0$. Combining this with the fact that the set $\{\alpha \in \hat{A}|H^0(A, \pi_*\mathcal{V}^e\otimes \mathcal{P}_{\alpha}) = 0\}$ is open, which is a consequence of Lower Semicontinuity Theorem (\cite[Sec. 8.3]{FGA05}), we conclude that $H^0(A, \pi_*\mathcal{V}^e\otimes \mathcal{P}_{\alpha}) \neq 0$ for any $\alpha \in \hat{A}$, and thus $H^0(X, K_X+ \ulcorner D\urcorner + a^*\mathcal{P}_{\alpha})\neq 0$. Again since $\pi_*\mathcal{F}^e$ is CGG, there exists some $\beta \in \hat{A}$ such that the trace map
$$Tr^e: H^0(X, F^{e}_*\mathcal{O}_X(K_X + \ulcorner p^e(D-\Delta)\urcorner)\otimes a^*\mathcal{P}_{\beta}) \cong H^0(A, \pi_*\mathcal{F}^e\otimes \mathcal{P}_{\beta}) \to H^0(A, \pi_*\mathcal{V}^e\otimes \mathcal{P}_{\beta})$$
is a nonzero map, which implies by (b,d) that $S_{-}^0(X, K_{X} + D + a^*\mathcal{P}_{\beta}) \neq 0$.
\smallskip

(ii) Fix $\alpha_0 \in \hat{A}$. By (i) we can take $\beta \in \hat{A}$ such that $S_{-}^0(X, K_{X} + D + a^*\mathcal{P}_{\beta}) \neq 0$. By the assumption we are allowed to take a nonzero section $s \in H^0(X, D_1+ a^*\mathcal{P}_{\alpha_0-\beta})$. Then applying Proposition \ref{prop:F-stable-section} (i), we prove (ii) by
$$S_{-}^0(X, K_{X} + D + a^*\mathcal{P}_{\beta})\otimes s \subseteq S_{-}^0(X, K_{X} + D + D_1 + \mathcal{P}_{\alpha_0}).$$

\smallskip

(iii) Assume $S^0_{-}(X_{\eta}, K_{X_{\eta}} + D_{\eta})$ is birational. There exists a nonempty open subset $X'$ of $X$ such that,  $f^*\mathcal{V}^e \to \mathcal{O}_X(K_X + \ulcorner D\urcorner)$ is surjective over $X'$, and that for every closed point $x \in X'$, $f^*(\empty_x\mathcal{V}^e) \to \mathcal{I}_x\cdot\mathcal{O}_X(K_X + \ulcorner D\urcorner)$ is surjective over $X'$, here $\empty_x\mathcal{V}^e$ denotes the kernel of the natural map $\mathcal{V}^e \to f_*(k(x)\otimes \mathcal{O}_X(K_X + \ulcorner D\urcorner)) \cong k(f(x))$.

From now on fix a closed point $x \in X'$. We define $\empty_x\mathcal{F}^e$ by the following commutative diagram of exact sequences
{\small $$\xymatrix{
&0\ar[r] &\empty_x\mathcal{F}^e \ar[r]\ar[d]^{\gamma_x} &\mathcal{F}^e \ar[r]\ar[d]^{\gamma} &k(f(x)) \ar[r]\ar@{=}[d] &0\\
&0\ar[r] &\empty_x\mathcal{V}^e \ar[r]&\mathcal{V}^e \ar[r] &k(f(x)) \ar[r] &0}$$}
where $\gamma_x$ is surjective since $\gamma$ is.
\begin{lem}
$\pi_*(\empty_x\mathcal{F}^e)$ is a GV-sheaf.
\end{lem}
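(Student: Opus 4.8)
The plan is to push the defining short exact sequence of $\empty_x\mathcal{F}^e$ down to the abelian variety $A$ and then run the Fourier--Mukai machinery recalled above. Since $\pi$ is finite, the functor $\pi_*$ is exact, so applying it to $0 \to \empty_x\mathcal{F}^e \to \mathcal{F}^e \to k(f(x)) \to 0$ produces a short exact sequence $0 \to \pi_*(\empty_x\mathcal{F}^e) \to \pi_*\mathcal{F}^e \xrightarrow{\rho} k(a(x)) \to 0$ on $A$, where $\pi_*k(f(x)) = k(a(x))$ because $\pi(f(x)) = a(x)$. By assumption (e) the sheaf $\pi_*\mathcal{F}^e$ is $IT^0$; hence $R^i\Phi_{\mathcal{P}}(\pi_*\mathcal{F}^e) = 0$ for all $i>0$, the sheaf $\widehat{\pi_*\mathcal{F}^e} := R^0\Phi_{\mathcal{P}}(\pi_*\mathcal{F}^e)$ is locally free, and, being $IT^0$, $\pi_*\mathcal{F}^e$ is in particular $M$-regular, hence CGG by Corollary \ref{cor:generation}(ii).

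Next I would observe that the skyscraper sheaf $k(a(x))$ is itself $IT^0$ with $h^0 = 1$, so $R^i\Phi_{\mathcal{P}}(k(a(x))) = 0$ for $i>0$ and $\mathcal{L} := R^0\Phi_{\mathcal{P}}(k(a(x)))$ is a line bundle on $\hat{A}$. Feeding the short exact sequence above into the long exact sequence of Fourier--Mukai cohomology sheaves and using these vanishings gives $R^i\Phi_{\mathcal{P}}(\pi_*(\empty_x\mathcal{F}^e)) = 0$ for every $i\geq 2$, together with an exact sequence $\widehat{\pi_*\mathcal{F}^e} \xrightarrow{\varphi} \mathcal{L} \to R^1\Phi_{\mathcal{P}}(\pi_*(\empty_x\mathcal{F}^e)) \to 0$, where $\varphi = R^0\Phi_{\mathcal{P}}(\rho)$. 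By Proposition \ref{prop:gv-M-reg}(i), to prove that $\pi_*(\empty_x\mathcal{F}^e)$ is a GV-sheaf it suffices to check that $\mathrm{codim}_{\hat{A}}\mathrm{Supp}\,R^1\Phi_{\mathcal{P}}(\pi_*(\empty_x\mathcal{F}^e)) \geq 1$, and since $\mathcal{L}$ is a line bundle on the integral variety $\hat{A}$ this holds as soon as $\varphi \neq 0$.

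The only real content is thus the non-vanishing of $\varphi$, which I expect to be the crux. Suppose $\varphi = 0$. Since $\pi_*\mathcal{F}^e$ and $k(a(x))$ are both $IT^0$, base change identifies the fibre of $\varphi$ over $\alpha \in \hat{A}$ with the map $H^0(A, \pi_*\mathcal{F}^e\otimes \mathcal{P}_{\alpha}) \to H^0(A, k(a(x))\otimes \mathcal{P}_{\alpha}) \cong k$ induced by $\rho$; so $\varphi = 0$ forces every global section of every twist $\pi_*\mathcal{F}^e \otimes \mathcal{P}_{\alpha}$ to vanish at $a(x)$. Viewing such a section as a homomorphism $\mathcal{P}_{\alpha}^{-1} \to \pi_*\mathcal{F}^e$, its composition with $\rho$ is then zero; hence $\rho$ kills the image of the evaluation map $\bigoplus_{\alpha \in \hat{A}} H^0(\pi_*\mathcal{F}^e\otimes \mathcal{P}_{\alpha})\otimes \mathcal{P}_{\alpha}^{-1} \to \pi_*\mathcal{F}^e$, which is surjective because $\pi_*\mathcal{F}^e$ is CGG --- contradicting $\rho \neq 0$. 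Therefore $\varphi \neq 0$ and the lemma follows. Apart from this step the argument is a formal diagram chase, so the clean hypothesis (e) does most of the work.
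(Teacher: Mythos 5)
Your proof is correct and follows essentially the same route as the paper: push the defining short exact sequence down to $A$ by exactness of $\pi_*$, use that $\pi_*\mathcal{F}^e$ is $IT^0$ to kill everything in degree $\geq 2$, and use the CGG property of $\pi_*\mathcal{F}^e$ to show the degree-one obstruction lives on a proper closed subset. The only (cosmetic) difference is that you verify the GV condition on the Fourier--Mukai side via $\mathrm{codim}\,\mathrm{Supp}\,R^1\Phi_{\mathcal{P}}\geq 1$, whereas the paper checks $\mathrm{codim}\,V^1\geq 1$ directly by computing $H^1(A,\pi_*(\empty_x\mathcal{F}^e)\otimes\mathcal{P}_{\alpha})$ as a cokernel for each $\alpha$; these are equivalent by Proposition \ref{prop:gv-M-reg}.
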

\begin{proof}
Since $\pi$ is a finite morphism, the following sequence is exact
$$0 \to \pi_*(\empty_x\mathcal{F}^e) \to \pi_*\mathcal{F}^e \to k(a(x))\cong k \to 0.$$
Tensoring the above sequence with $\mathcal{P}_{\alpha} \in \mathrm{Pic}^0(A)$ and taking cohomology we obtain a long exact sequence, then since $\pi_*\mathcal{F}^e$ is $IT^0$ we deduce that
\begin{itemize}
\item
for~$i\geq 2$, $H^i(A, (\pi_*(\empty_x\mathcal{F}^e))\otimes \mathcal{P}_{\alpha}) =0$~which means $V^i(\pi_*(\empty_x\mathcal{F}^e)) = \emptyset$, and
\item
$H^1(A, \pi_*(\empty_x\mathcal{F}^e)\otimes \mathcal{P}_{\alpha})= \mathrm{coker} (H^0(A, \pi_*\mathcal{F}^e\otimes \mathcal{P}_{\alpha}) \to H^0(A, k(a(x))\otimes \mathcal{P}_{\alpha})\cong k)$.
\end{itemize}
Then since $\pi_*\mathcal{F}^e \to k(a(x))$ is surjective and $\pi_*\mathcal{F}^e$ is CGG, there exists $\alpha \in \hat{A}$ such that the evaluation map $H^0(A, \pi_*\mathcal{F}^e\otimes \mathcal{P}_{\alpha}) \to k(a(x))$ is surjective. Therefore, the closed subset
$$\{\alpha \in \hat{A}|\mathrm{the ~evaluation ~map}~H^0(A, \pi_*\mathcal{F}^e\otimes \mathcal{P}_{\alpha}) \to k~\mathrm{is ~zero}\}$$
has codimension at least one in $\hat{A}$, thus $\pi_*(\empty_x\mathcal{F}^e)$ is a GV-sheaf.
\end{proof}

By Corollary \ref{cor:generation}, there exist $\alpha_1, \alpha_2, \cdots, \alpha_m \in \hat{A}$ such that the evaluation homomorphism
$$\bigoplus_{j=1}^m H^0(A, \pi_*(\empty_x\mathcal{F}^e)\otimes \mathcal{P}_{\alpha_j}) \otimes \mathcal{P}_{\alpha_j}^{-1} \to \pi_*(\empty_x\mathcal{F}^e)$$
is surjective. Notice that for $\alpha \in \mathrm{Pic}^0(A)$, by (d) the image of the composition map
\begin{align*}
H^0(A, \pi_*(\empty_x\mathcal{F}^e)\otimes \mathcal{P}_{\alpha}) &\hookrightarrow H^0(A, \pi_*\mathcal{F}^e\otimes \mathcal{P}_{\alpha}) \\
&\cong H^0(X, \mathcal{O}_X(K_X + \ulcorner p^e(D-\Delta)\urcorner)+ p^ea^*\mathcal{P}_{\alpha}) \\
&\xrightarrow{Tr^e} H^0(X, \mathcal{O}_X(K_X + \ulcorner D\urcorner + a^*\mathcal{P}_{\alpha})
\end{align*}
coincides with $\empty_xS^0_{\Delta}(X, K_X + D + a^*\mathcal{P}_{\alpha})$ (the subspace of $S^0_{\Delta}(X, K_X + D + a^*\mathcal{P}_{\alpha})$ consisting of those sections vanishing at $x$), and that
$$a^*(\pi_*(\empty_x\mathcal{F}^e)) \xrightarrow{Tr^e} f^*(\empty_x\mathcal{V}^e) \to \mathcal{I}_x\cdot\mathcal{O}_X(K_X + \ulcorner D\urcorner)$$
is surjective on $X'$. Then we can conclude
$$\bigoplus_{j=1}^m (\empty_xS^0_{\Delta}(X, K_X + D + a^*\mathcal{P}_{\alpha_j})\otimes a^*\mathcal{P}_{-\alpha_j} )\to \mathcal{I}_x\cdot\mathcal{O}_X(K_X + \ulcorner D\urcorner)$$
is surjective on $X'$.

Now fix $\alpha_0 \in \hat{A}$. We can take a nonempty open subset $\hat{V} \subseteq \hat{A}$ such that for $i=1,2$, $\Phi_{\mathcal{P}}(a_*\mathcal{O}_X(D_i))$ is locally free over $\hat{V}$ and for any $\alpha \in \hat{V}$  $$\Phi_{\mathcal{P}}(a_*\mathcal{O}_X(D_i))\otimes k(\alpha) \cong H^0(X, \mathcal{O}_X(D_i)\otimes a^*\mathcal{P}_{\alpha}).$$
Let $T_i = \cap_{\alpha \in \hat{V}} \mathrm{Bs}|D_i + a^*\mathcal{P}_{\alpha}|$ and $X'' = X' \setminus (T_1 \cup T_2)$.
Observe this fact
\begin{itemize}
\item[(f)] for a closed point $z \in X''$ the set
$\hat{V}_i=\{\alpha \in \hat{V}| z\notin \mathrm{Bs}|D_i + a^*\mathcal{P}_{\alpha}|\}$
is a nonempty open subset of $\hat{V}$.
\end{itemize}
To prove that $S^0_{-}(X, K_X + \ulcorner D\urcorner + D_1 + D_2 + a^*\mathcal{P}_{\alpha_0})$ is birational, we only need to verify that for any $x \in X''$, $\mathcal{I}_x\cdot\mathcal{O}_X(K_X + \ulcorner D\urcorner + D_1 + D_2 + a^*\mathcal{P}_{\alpha_0})$ is globally generated over $X''$ by $\empty_xS^0_{-}(X, K_X + \ulcorner D\urcorner + D_1 + D_2 + a^*\mathcal{P}_{\alpha_0})$. Fix a closed point $z \in X''$. Then by (f), we know that for general $\beta_1, \cdots, \beta_m \in \hat{V}$,
$$z \notin\mathrm{Bs}|D_1 + a^*\mathcal{P}_{\alpha_0 - \alpha_j - \beta_j}| \cup \mathrm{Bs}|D_2 + a^*\mathcal{P}_{\beta_j}|,~j=1,2, \cdots, m.$$
For any $j =1,2, \cdots, m$, we may take
$$s_j \in H^0(X, D_1 +  a^*\mathcal{P}_{\alpha_0 - \alpha_j - \beta_j})~\mathrm{and}~t_j \in H^0(X, D_2 +  a^*\mathcal{P}_{\beta_j})$$
such that $s_j(z) \neq 0$ and  $t_j(z) \neq 0$. Then
$$\empty_xS^0_{\Delta}(X, K_X + D + a^*\mathcal{P}_{\alpha_j})\otimes s_j\otimes t_j \subseteq~ \empty_xS^0_{\Delta}(X, K_X + D + D_1 +D_2 +a^*\mathcal{P}_{\alpha_0}).$$
It follows that around $z$, $\sum_{j}~\empty_xS^0_{\Delta}(X, K_X + D + a^*\mathcal{P}_{\alpha_j})\otimes s_j\otimes t_j$ generates the sheaf $\mathcal{I}_x\cdot\mathcal{O}_X(K_X + \ulcorner D\urcorner + D_1 + D_2 + a^*\mathcal{P}_{\alpha_0})$. This finish the proof of the assertion (iii).
\smallskip

(iv) Applying the assertion (i), the assumptions on $D_i$ imply that for any $\alpha \in \hat{A}$, $H^0(X, K_X +\ulcorner D_i\urcorner + a^*\mathcal{P}_{\alpha}) \neq 0$. Then applying the assertion (iii) we get (iv).
\end{proof}

\section{Frobenius stable pluricanonical systems on curves and surfaces}\label{sec:dim2}
In this section we will study Frobenius stable pluricanonical systems on curves and surfaces in positive characteristic.

\begin{thm}\label{thm:eff-curve}
Let $X$ be a normal projective curve over an algebraically closed field $k$ of characteristic $p$. Let $D$ be a $\mathbb{Q}$-Cartier $\mathbb{Q}$-divisor on $X$. If $\deg D >1$ then $S_{-}^0(X, K_X + D)$ is base point free; and if $\deg D >2$ then $S_{-}^0(X, K_X + D)$ is very ample.
\smallskip

In particular, if $g(X) \geq 2$ then $S_{-}^0(X, K_X + K_X) \neq 0$ and $S_{-}^0(X, K_X + nK_X)$ is very ample for $n\geq 2$.
\end{thm}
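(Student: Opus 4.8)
The plan is to prove the statement about curves $X$, namely that for a $\mathbb{Q}$-Cartier $\mathbb{Q}$-divisor $D$ with $\deg D > 1$ the space $S_{-}^0(X, K_X + D)$ is base point free, and with $\deg D > 2$ it is very ample, and then to deduce the pluricanonical consequences. Since $X$ is a normal projective curve it is regular, so by Proposition \ref{prop:F-stable-section}(iii) we may reduce to working with $S_{t\Delta}^0(X, K_X + D)$ for an appropriate effective $\mathbb{Q}$-divisor $\Delta$ with $D - \Delta$ ample and small $t$, and ultimately with a single $S^e(X, K_X + \ulcorner D\urcorner)$ for $e \gg 0$. Thus the heart of the matter is to show that, after writing $D - \Delta$ as an ample divisor with $p^a$-divisible index, the trace map $Tr^e : H^0(X, F^e_*\mathcal{O}_X(K_X + \ulcorner p^e D\urcorner)) \to H^0(X, \mathcal{O}_X(K_X + \ulcorner D\urcorner))$ surjects onto a base-point-free (resp. very ample) subspace once the degree hypotheses hold.

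The key computation will be a Fujita-vanishing / Mumford-regularity argument on the curve, entirely parallel to the proof of Lemma \ref{lem:Mumford-reg} but one dimension lower. Concretely: fix a point $x \in X$ (resp. two points $x_1, x_2$, or a length-two subscheme). To get global generation of $S_{-}^0(X, K_X + D)$ at $x$ one needs $H^1(X, \mathcal{O}_X(K_X + \ulcorner p^e(D-\Delta)\urcorner) - x) = 0$ for $e \gg 0$ together with the surjectivity of $Tr^e$ after twisting by $\mathcal{O}_X(-x)$; since $\deg(D - \Delta) > 1$ we can arrange $\deg(\ulcorner p^e(D-\Delta)\urcorner - x) > 0$, and on a curve a positive-degree divisor plus the pushforward-of-canonical trick gives the needed vanishing via Theorem \ref{thm:var-fujita-vanishing}. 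For very ampleness one repeats with a length-two subscheme, needing $\deg(D-\Delta) > 2$ so that $\ulcorner p^e(D-\Delta)\urcorner$ minus a degree-two divisor still has positive degree. The compatibility between the vanishing on the Frobenius pushforward side and the section on $X$ side is exactly the factorization of $Tr^e$ through $F^{e-1}_* Tr^1$ recorded in Section \ref{sec:Frobenius stable sections}; one induces on $e$ as in Proposition \ref{prop:F-stable-section}(iii-1).

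For the pluricanonical corollary, suppose $g(X) \geq 2$, so $\deg K_X = 2g - 2 \geq 2 > 1$; applying the base-point-free part with $D = K_X$ gives $S_{-}^0(X, K_X + K_X) \neq 0$ (base point freeness in particular forces nonvanishing). For $n \geq 2$ we have $\deg(nK_X) = n(2g-2) \geq 2 \cdot 2 = 4 > 2$, so the very ampleness part applies with $D = nK_X$, giving that $S_{-}^0(X, K_X + nK_X)$ is very ample. One subtlety to watch: when $g(X) = 2$ and $n = 2$ we get $\deg D = 4$, comfortably above $2$, and when $g = 2, n = 1$ we get $\deg D = 2$, which is not $> 1$ — but the claim only asserts nonvanishing there, and $\deg K_X = 2 > 1$ is exactly the borderline handled by the first part, so no gap arises. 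The main obstacle I expect is the bookkeeping in the induction on $e$ needed to pass from a single $S^e$ to the intersection defining $S^0_{-}$, i.e. making sure the rounding $\ulcorner p^e(D-t\Delta)\urcorner$ dominates the divisor appearing after the Kozul/point-twist step uniformly in $e$; this is precisely the kind of estimate carried out in the proof of Proposition \ref{prop:F-stable-section}(iii-1), so it should go through, but it is the step requiring care rather than the Fujita vanishing itself.
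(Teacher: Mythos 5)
Your overall route matches the paper's: reduce via Proposition \ref{prop:F-stable-section}(iii) to a single $S^e(X,K_X+D)$ for a divisor whose index is a power of $p$, obtain surjectivity of evaluation at a point from an $H^1$ vanishing on the Frobenius pushforward side, get very ampleness by first removing a point from $D$, and check the degrees $2g-2\geq 2$ and $n(2g-2)\geq 4$ for the pluricanonical corollary. All of that is sound.

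The gap is in the two ingredients you list for base point freeness. The trace map does \emph{not} carry $F^e_*\mathcal{O}_X(K_X+\ulcorner p^eD\urcorner-x)$ into $\mathcal{O}_X(K_X+\ulcorner D\urcorner-x)$ (locally $t^{p^e-1}\,dt$ vanishes at $x$ upstairs but its Cartier image $dt$ does not vanish downstairs), so ``surjectivity of $Tr^e$ after twisting by $\mathcal{O}_X(-x)$'' is not available as stated, and the sheaf whose $H^1$ must vanish is not $\mathcal{O}_X(K_X+\ulcorner p^e(D-\Delta)\urcorner-x)$. What is actually needed is the kernel of the composite $F^e_*\mathcal{O}_X(K_X+\ulcorner p^eD\urcorner)\to \mathcal{O}_X(K_X+\ulcorner D\urcorner)\to k(x)$, and the local computation --- the real content of the paper's proof --- identifies this kernel as $F^e_*\mathcal{O}_X(K_X+\ulcorner p^eD\urcorner-q_{e,x}x)$ with $q_{e,x}\in[1,p^e]$; its $H^1$ vanishes because $\deg(\ulcorner p^eD\urcorner-q_{e,x}x)\geq p^e(\deg D-1)>0$. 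This is exactly where $\deg D>1$ enters. Your bookkeeping only ever subtracts a single point upstairs, so its internal logic would give base point freeness whenever $\deg D>0$, which is false already for $H^0(X,K_X+x_0)$ with $D=x_0$ of degree one ($x_0$ is a base point); the hypothesis $\deg D>1$ is carried along but never used where it must be. Once this is corrected, the very-ampleness step can be done without a length-two subscheme: apply the first part to $D-z$ to see that $\mathcal{O}_X(K_X+\ulcorner D\urcorner-z)$ is generated by $S^0(X,K_X+D-z)\subseteq S^0(X,K_X+D)$, which is what the paper does.
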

\begin{proof} We may replace $D$ with a smaller divisor, which still satisfies the conditions of the theorem and in addition that the index is a power of $p$. So we only need to prove the assertion for $S^0(X, K_X + D)$ by Proposition \ref{prop:F-stable-section} (iii).

By direct local computations, we can show that
\begin{itemize}
\item
the trace map $Tr^e: F^e_*\mathcal{O}_X(K_X + \ulcorner p^eD\urcorner) \to \mathcal{O}_X(K_X + \ulcorner D\urcorner)$ is surjective, and for a closed point $x \in X$, the kernel of the composition map
$$F^e_*\mathcal{O}_X(K_X + \ulcorner p^eD\urcorner) \to \mathcal{O}_X(K_X + \ulcorner D\urcorner) \to k(x)\otimes \mathcal{O}_X(K_X + \ulcorner D\urcorner) \cong k(x)$$
is $F^e_*\mathcal{O}_X(K_X + \ulcorner p^eD\urcorner - q_{e,x} x)$ for some integer $q_{e,x} \in [1,p^e]$.
\end{itemize}
It follows that for any closed point $x \in X$, the following commutative diagram of exact sequences holds
{\small$$\xymatrix@C=0.5cm{
&0\ar[r] &F^e_*\mathcal{O}_X(K_X + \ulcorner p^eD\urcorner - q_{e,x}x) \ar[r]\ar[d] &F^e_*\mathcal{O}_X(K_X + \ulcorner p^eD\urcorner - (q_{e,x}-1)x)\ar[r]\ar[d] &k(x) \ar[r]\ar@{=}[d] &0\\
&0\ar[r] &\mathcal{O}_X(K_X + \ulcorner D\urcorner -x) \ar[r]\ar[d] &\mathcal{O}_X(K_X + \ulcorner D\urcorner) \ar[r]\ar[d] &k(x) \ar[r] &0\\
&  &0  &0 &  &
}$$}

First assume $\deg D >1$. Then for for any closed point $x \in X$ and any sufficiently large $e$, $H^1(X, F^e_*\mathcal{O}_X(K_X + \ulcorner p^eD\urcorner - q_{e,x}x) = 0$. By taking cohomology of the first row of the diagram above, we show that $\mathcal{O}_X(K_X + \ulcorner D\urcorner)$ is globally generated by $S^0(X, K_X + D)$ at $x$. This proves $S^0(X, K_X + D) \neq 0$ is base point free.

Next assume $\deg D >2$. Fix a closed point $z \in X$ and set $D'= D-z$. By replacing $D$ with $D'$, the argument in the previous paragraph shows that the sheaf $\mathcal{O}_X(K_X + \ulcorner D\urcorner - z)$ is globally generated by $S^0(X, K_X + D-z)$.
From this we conclude that $S^0(X, K_X + D)$ is very ample.

\smallskip
Finally the remaining assertion follows from the fact that $\deg K_X \geq 2$ if $X$ is of general type.
\end{proof}

\begin{thm}\label{thm:eff-surface}
Let $X$ be a smooth surface over an algebraically closed field $k$ of characteristic $p$. Let $a: X \to A$ be the Albanese map (which is trivial if $q(X)=0$), and let $f:X \to Y$ be the fibration induced by the Stein factorization of $a$. Let $D$ be a nef and big Cartier divisor and $D'$ a big $\mathbb{Q}$-Cartier $\mathbb{Q}$-divisor on $X$.
\smallskip

Case (i) $q(X) =0$. Let $r$ be a natural number such that $(rD-K_X)\cdot D \geq 0$. Then $S_{-}^0(X, K_X + (r+2)D + D') \neq 0$, and $S_{-}^0(X, K_X + (2r+4)D + D')$ is birational.
\smallskip

Case (ii) $q(X) >0$. Fix $\mathcal{P}_\alpha \in \mathrm{Pic}^0(A)$.

Case (ii-1) $\dim Y =2$. Then $S_{-}^0(X, K_X + D' + (K_X+  D) + a^*\mathcal{P}_\alpha) \neq 0$, and $S_{-}^0(X, K_X + D'+ 2(K_X+ D) + a^*\mathcal{P}_\alpha)$ is birational.

Case (ii-2) $\dim Y =1$. If $\deg D|_{X_{\eta}} \geq 2$, then $S_{-}^0(X, K_X + D + D' + (K_X+ D) + a^*\mathcal{P}_\alpha) \neq 0$, and $S_{-}^0(X, K_X + D + D' + 2(K_X+ D) + a^*\mathcal{P}_\alpha)$ is birational; if $\deg D|_{X_{\eta}} = 1$, then $S_{-}^0(X, K_X + D + D' + (K_X+ 2D)+ a^*\mathcal{P}_\alpha) \neq 0$, and $S_{-}^0(X, K_X + 2D + D' + 2(K_X+ 2D)+ a^*\mathcal{P}_\alpha)$ is birational.

\smallskip
Moreover, if $X$ is of general type then $S_{-}^0(X, K_X + nK_X) \neq 0$ for $n \geq 4$, and $S_{-}^0(X, K_X + nK_X)$ is birational for $n\geq 7$.
\end{thm}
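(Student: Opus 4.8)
The plan is to treat Theorem~\ref{thm:eff-surface} as a package: first establish the four numerical cases (i), (ii-1), (ii-2), and then read off the pluricanonical ``Moreover'' by specialising $D$ and $D'$. Fix once and for all the contraction $\sigma\colon X\to X_{0}$ to the (smooth) minimal model, with $\sigma=\mathrm{id}$ when $X$ is already minimal, and set $D_{0}:=\sigma^{*}K_{X_{0}}$, a nef and big Cartier divisor on $X$ with $K_{X}=D_{0}+E$ for an effective $\sigma$-exceptional $E$. Two elementary remarks make the constants fall out: $(D_{0})^{2}=(K_{X_{0}})^{2}=K_{X}\cdot D_{0}$, so $(rD_{0}-K_{X})\cdot D_{0}=(r-1)(K_{X_{0}})^{2}\ge 0$ already for $r=1$; and every $\sigma$-exceptional curve is rational, hence is contracted by any fibration of $X$ onto a curve of positive genus and collapsed by any morphism of $X$ to an abelian variety, so that a general fibre of the Stein factorisation of the Albanese map is disjoint from $\mathrm{Exc}(\sigma)$.

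For Cases (ii-1) and (ii-2) I would run Theorem~\ref{thm:bir-criterion-irr} on the Stein factorisation $f\colon X\to Y$ of $a\colon X\to A$. If $\dim Y=2$ the generic fibre $X_{\eta}$ is $0$-dimensional, so the hypotheses on $X_{\eta}$ hold trivially; I take $D_{1}=D_{2}=K_{X}+\ulcorner D\urcorner$, which satisfy $|D_{i}+a^{*}\mathcal{P}_{\alpha}|\neq\emptyset$ for every $\alpha$ by part~(i) of that theorem, and then parts~(ii) and~(iii) (after splitting the big divisor $D'$ as an ample $\mathbb{Q}$-divisor plus an effective divisor and absorbing the latter via Proposition~\ref{prop:F-stable-section}(i)) give the asserted nonvanishing and birationality. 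If $\dim Y=1$ the generic fibre is a curve over a field of transcendence degree one over $k$, hence smooth and separable by Section~\ref{sec:bc}, with $K_{X_{\eta}}=K_{X}|_{X_{\eta}}$; Theorem~\ref{thm:eff-curve} then yields $S^{0}_{-}(X_{\eta},K_{X_{\eta}}+D|_{X_{\eta}})\neq 0$ as soon as $\deg D|_{X_{\eta}}=D\cdot F\ge 2$ and birationality as soon as $D\cdot F\ge 3$, which is exactly the dichotomy recorded in the statement (when $D\cdot F=1$ one uses $2D$ in place of $D$ in the relevant slots to make up the missing degree). Feeding these into parts~(ii), (iii), (iv) of Theorem~\ref{thm:bir-criterion-irr} gives (ii-2).

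Case~(i) is the only regime without an ambient fibration, and it is the main obstacle. The plan is to cut $X$ down by a general smooth curve $C$ obtained from the linear system of $D$ (after a harmless birational modification of $X$ if necessary), so that adjunction gives $K_{C}=(K_{X}+C)|_{C}$ and $\big(K_{X}+(r+2)D+D'\big)|_{C}$ becomes $K_{C}+(\text{a divisor of degree }>1$, respectively $>2$ for the birational claim$)$: here the numerical hypothesis $(rD-K_{X})\cdot D\ge 0$, together with $D$ nef and big and $D'$ big, is exactly what guarantees these degrees and the positivity needed below, and Theorem~\ref{thm:eff-curve} supplies $S^{0}_{-}(C,-)\neq 0$, respectively birationality, on $C$. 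The genuine difficulty is to propagate this back from $C$ to $X$: in characteristic $p$ there is no Kawamata--Viehweg vanishing to kill $H^{1}$ of $K_{X}+(r+1)D+D'$ and its Frobenius twists, so the extension has to be carried out inside the Frobenius stable package, combining Fujita vanishing (Theorem~\ref{thm:var-fujita-vanishing}) applied uniformly to all Frobenius powers with the compatibilities of Proposition~\ref{prop:F-stable-section} and the Mumford-regularity argument already used in Theorem~\ref{thm:bir-criterion-for-induction}. It is precisely this substitution that is responsible for the coefficients $r+2$ and $2r+4$ rather than the smaller values available over $\mathbb{C}$.

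Finally, for the ``Moreover'' I would apply the four cases with $D=D_{0}=\sigma^{*}K_{X_{0}}$ and a tailored big $\mathbb{Q}$-Cartier divisor $D'$. When $q(X)=0$, Case~(i) with $r=1$ gives $S^{0}_{-}(X,K_{X}+3D_{0}+D')\neq 0$ and $S^{0}_{-}(X,K_{X}+6D_{0}+D')$ birational; choosing $D'=(n-3)D_{0}+nE$ (big for $n\ge 4$), respectively $D'=(n-6)D_{0}+nE$ (big for $n\ge 7$), one has $K_{X}+3D_{0}+D'=(n+1)K_{X}$, respectively $K_{X}+6D_{0}+D'=(n+1)K_{X}$, and Proposition~\ref{prop:F-stable-section}(i) places the conclusion inside $S^{0}_{-}(X,K_{X}+nK_{X})$. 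When $q(X)>0$, since a general fibre of $X\to Y$ avoids $\mathrm{Exc}(\sigma)$ one has $\deg D_{0}|_{X_{\eta}}=K_{X_{0}}\cdot F_{0}\ge 2$, so Cases~(ii-1) and~(ii-2) apply in their more favourable branches and the analogous choices of $D'$ yield $S^{0}_{-}(X,K_{X}+nK_{X})\neq 0$ for $n\ge 3$ and birationality for $n\ge 5$. Combining the two ranges, $n\ge 4$ forces nonvanishing and $n\ge 7$ forces birationality, as claimed.
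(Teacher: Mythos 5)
Your treatment of Cases (ii-1), (ii-2) and of the ``Moreover'' clause matches the paper's route (Stein factorization of the Albanese map, Theorem \ref{thm:eff-curve} on the normalized geometric generic fiber via Theorem \ref{thm:bir-geo-gen}, then Theorem \ref{thm:bir-criterion-irr}), up to harmless slips: the more favourable branch of (ii-2) yields $n\geq 4$ for nonvanishing and $n\geq 6$ for birationality in the pluricanonical application, not $n\geq 3$ and $n\geq 5$; this does not affect the final bounds $4$ and $7$, which are forced by the $q(X)=0$ case.

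Case (i), however, has a genuine gap, and it is the heart of the theorem. First, you never produce the input curve: for a nef and big Cartier divisor $D$ the system $|D|$ may well be empty, so ``a general smooth curve $C$ obtained from the linear system of $D$'' is not available. The paper's essential step here is Riemann--Roch: $q(X)=0$ gives $\chi(\mathcal{O}_X)>0$, the hypothesis $(rD-K_X)\cdot D\geq 0$ kills $h^2(X,(r+1)D)=h^0(X,K_X-(r+1)D)$, and hence $h^0(X,(r+1)D)\geq \chi(\mathcal{O}_X((r+1)D))\geq 2$ --- a \emph{pencil}, not just one curve. Second, your extension mechanism is not one the paper provides: you propose to restrict to a single divisor $C$, use adjunction, and then ``propagate back'' via Fujita vanishing and Mumford regularity, but Theorem \ref{thm:bir-criterion-for-induction}/\ref{thm:bir-criterion-1} lifts sections from the \emph{generic fiber of a fibration}, not from a divisor in a linear system. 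This is exactly why the pencil matters: after a blow-up its movable part defines $f\colon X\to\mathbb{P}^1$, one applies Theorem \ref{thm:eff-curve} to the normalization of the geometric generic fiber (using $D\cdot F\geq 1$, $D'\cdot F>0$), and Theorem \ref{thm:bir-criterion-1} with $d=1$ and $s=1$ (resp.\ $s=2$) then gives $S^0_{-}(X,K_X+H+D+D')\neq 0$ (resp.\ birationality of $S^0_{-}(X,K_X+2H+2D+D')$), with $H\leq (r+1)D$ accounting for the coefficients $r+2$ and $2r+4$. As written, your proposal leaves precisely this extension step as an acknowledged black box, so Case (i) is not proved.
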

\begin{proof} We may replace $D'$ with a smaller divisor to assume it is nef.
\smallskip

Case (i). In this case we have $h^1(\mathcal{O}_X) - h^2(\mathcal{O}_X) \leq q(X) =0$ (see for example \cite[Remark 9.5.15, 9.5.25]{FGA05}), thus $\chi(\mathcal{O}_X) >0$. And by the assumption $(rD-K_X)\cdot D \geq 0$, we have $h^2(X,(r+1)D) = h^0(X, K_X-(r+1)D)=0$. Applying Riemann-Roch formula we get
$$h^0(X,(r+1)D) \geq \chi(\mathcal{O}_X((r+1)D)) = \frac{((r+1)D-K_X)\cdot (r+1)D}{2} + \chi(\mathcal{O}_X) \geq 2.$$
Remark that if we blow up $X$ and replace $D$ with the pullback, it still holds that $(rD-K_X )\cdot D \geq 0$, hence we are allowed to blow up $X$ in the proof. Take a linear system $|V| \subseteq |(r+1)D|$ of dimension one. If necessary blowing up $X$, we may assume $|V|= |H| + E$ where $|H|$ is free of base point and defines a fibration $f: X \to Y=\mathbb{P}^1$. Since
$\dim Y=1$, by results of Sec. \ref{sec:bc}, $X_{\bar{\eta}}$ is integral. Denote by $Z_{\bar{\eta}}$ the normalization of $X_{\bar{\eta}}$. Since $\deg (D + D')|_{Z_{\bar{\eta}}} >1$ and $\deg (2D + D')|_{Z_{\bar{\eta}}} >2$, by Theorem \ref{thm:eff-curve} we know that $S_{-}^0(Z_{\bar{\eta}}, K_{Z_{\bar{\eta}}} + (D + D')|_{Z_{\bar{\eta}}}) \neq 0$ and $S_{-}^0(Z_{\bar{\eta}}, K_{Z_{\bar{\eta}}} + (2D + D')|_{Z_{\bar{\eta}}})$ is birational. Combining Theorem \ref{thm:bir-geo-gen} and \ref{thm:bir-criterion-1}, it follows  that $S_{-}^0(X, K_X + H + D+ D') \neq 0$ and $S_{-}^0(X, K_X + 2H + 2D+ D')$ is birational. This statement still holds if $H$ replaced with $(r+1)D$ by Proposition \ref{prop:F-stable-section} (i).
\smallskip

Case (ii). In this case, we denote by $Z_{\bar{\eta}}$ the normalization of $X_{\bar{\eta}}^{\mathrm{red}}$.

If $\dim Z_{\bar{\eta}} = 0$ then both $S_{-}^0(Z_{\bar{\eta}}, K_{Z_{\bar{\eta}}} + D|_{Z_{\bar{\eta}}})$ and $S_{-}^0(Z_{\bar{\eta}}, K_{Z_{\bar{\eta}}} + D'|_{Z_{\bar{\eta}}})$ are birational. Applying Theorem  \ref{thm:bir-criterion-irr} (i), (ii) and (iv), we can show the claimed result in Case (ii-1).

Now assume $\dim Z_{\bar{\eta}} = 1$. We know that $X_{\bar{\eta}}$ is integral, which implies $\deg D|_{X_{\eta}} = \deg D|_{Z_{\bar{\eta}}}$. Applying Theorem \ref{thm:eff-curve} we obtain that
\begin{itemize}
\item
$S_{-}^0(Z_{\bar{\eta}}, K_{Z_{\bar{\eta}}} + 2D|_{Z_{\bar{\eta}}}) \neq 0$, $S_{-}^0(Z_{\bar{\eta}}, K_{Z_{\bar{\eta}}} + (D+D')|_{Z_{\bar{\eta}}}) \neq 0$, and if moreover $\deg D|_{X_{\eta}} \geq 2$ then $S_{-}^0(Z_{\bar{\eta}}, K_{Z_{\bar{\eta}}} + D|_{Z_{\bar{\eta}}}) \neq 0$; and
\item
$S_{-}^0(Z_{\bar{\eta}}, K_{Z_{\bar{\eta}}} + (2D + D')|_{Z_{\bar{\eta}}})$ is birational, and if $\deg D|_{X_{\eta}} \geq 2$ then $S_{-}^0(Z_{\bar{\eta}}, K_{Z_{\bar{\eta}}} + (D + D')|_{Z_{\bar{\eta}}})$ is birational.
\end{itemize}
Theorem \ref{thm:bir-geo-gen} tells that the corresponding results hold true for $X_{\eta}$. Then we can apply Theorem \ref{thm:bir-criterion-irr} (i), (ii) and (iv) to show the assertion in Case (ii-2).
\smallskip

At the end, let us focus on the pluricanonical system. We assume $X$ is of general type and take a minimal model $\bar{X}$, which is endowed with the birational morphism $\rho:X \to \bar{X}$. Set $D = D'=\rho^*K_{\bar{X}}$. Since $K_X \geq \rho^*K_{\bar{X}}$, it is enough to show the corresponding assertions for $S^0(X, K_X + n\rho^*K_{\bar{X}})$. If $q(X) =0$, then applying results of Case (i) we obtain that $S_{-}^0(X, K_X + n\rho^*K_{\bar{X}}) \neq 0$ for $n\geq 4$, and $S_{-}^0(X, K_X + n\rho^*K_{\bar{X}})$ is birational for $n\geq 7$. If $q(X) >0$ and $\dim Y = 2$, then applying results of Case (ii-1) we obtain that $S_{-}^0(X, K_X + n\rho^*K_{\bar{X}}) \neq 0$ for $n\geq 3$, and $S_{-}^0(X, K_X + n\rho^*K_{\bar{X}})$ is birational for $n\geq 5$. If $q(X) >0$ and $\dim Y = 1$, since the arithmetic genus $p_a(X_{\eta}) \geq 2$ which implies $\deg \rho^*K_{\bar{X}}|_{X_{\eta}}= \deg K_X|_{X_{\eta}} \geq 2$, then applying results of Case (ii-2) we obtain that $S_{-}^0(X, K_X + n\rho^*K_{\bar{X}}) \neq 0$ for $n\geq 4$, and $S_{-}^0(X, K_X + n\rho^*K_{\bar{X}})$ is birational for $n\geq 6$.
\end{proof}

\begin{cor}\label{cor:eff-surface}
Let the notation be as in Theorem \ref{thm:eff-surface}. Assume that the Albanese map $a: X \to A$ factors through a birational contraction $\sigma: X \to \bar{X}$ such that $D = \sigma^*\bar{D}$ is the pull back of a nef and big Cartier divisor $\bar{D}$ on $\bar{X}$.
Assume moreover that there exist an integer $r>0$ and an integral effective $\sigma$-exceptional divisor $E$ such that $D''=rD +E - K_X$ is big. Then for any $\mathcal{P}_{\alpha} \in \mathrm{Pic}^0(A)$, $S_{-}^0(X, K_X + (2+r)D + D'+ a^*\mathcal{P}_{\alpha}) \neq 0$, and $S_{-}^0(X, K_X + (4+2r)D + D'+ a^*\mathcal{P}_{\alpha})$ is birational.
\end{cor}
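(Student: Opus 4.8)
The plan is to run the argument of Theorem~\ref{thm:eff-surface} in the present twisted setting, substituting Theorem~\ref{thm:bir-criterion-irr} for the non-equivariant criteria and absorbing the exceptional divisor at the end by Proposition~\ref{prop:F-stable-section}(iv). Let $f\colon X\to Y$ be the fibration obtained from the Stein factorization of $a$, write $a=\pi\circ f$ with $\pi\colon Y\to A$ finite, and set $d=\dim Y$. After a harmless reduction --- $X$ is already smooth, $\bar X$ being a normal surface is $\mathbb{Q}$-factorial (the $\sigma$-exceptional curves have negative definite intersection matrix), and by replacing $D'$ by a smaller divisor we may assume $D'=\sigma^{*}\bar D'$ with $\bar D'$ an ample $\mathbb{Q}$-Cartier $\mathbb{Q}$-divisor on $\bar X$ (using that $D'$ is big) --- I would first record the numerical input. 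Since $D=\sigma^{*}\bar D$ and $E$ is $\sigma$-exceptional, $E\cdot D=0$ by the projection formula, so $(rD-K_X)\cdot D=D''\cdot D\ge 0$ (a nef divisor meets a big divisor nonnegatively); and for a general fibre $F$ of $f$ one has $E\cdot F=0$, hence $\deg D''|_{X_\eta}=r(D\cdot F)-(2g(F)-2)>0$, which yields $\deg\bigl((r+1)D+E\bigr)|_{X_\eta}=\deg(D+D'')|_{X_\eta}\ge 2$.

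If $d=0$ then $q(X)=0$, there is only the trivial $\mathcal P_\alpha$, and the statement is exactly Case~(i) of Theorem~\ref{thm:eff-surface}, whose hypothesis $(rD-K_X)\cdot D\ge 0$ has just been checked. So assume $d\in\{1,2\}$, and pass, via Theorem~\ref{thm:bir-geo-gen}, to a normal model $Z_{\bar\eta}$ of the geometric generic fibre of $f$. When $d=2$ this is a point, so $S_{-}^0(X_\eta,K_{X_\eta}+\tilde D_\eta)$ is non-zero (and trivially ``birational'') for every nef and big $\mathbb{Q}$-Cartier $\tilde D$. When $d=1$, $Z_{\bar\eta}$ is a curve; Hodge index on $X$ forces $D\cdot F\ge 1$, so $\deg(D+D')|_{Z_{\bar\eta}}>1$ and $\deg(2D+D')|_{Z_{\bar\eta}}>2$, and Theorem~\ref{thm:eff-curve} gives $S_{-}^0(Z_{\bar\eta},K_{Z_{\bar\eta}}+(D+D')|_{Z_{\bar\eta}})\neq 0$ and $S_{-}^0(Z_{\bar\eta},K_{Z_{\bar\eta}}+(2D+D')|_{Z_{\bar\eta}})$ birational; by Theorem~\ref{thm:bir-geo-gen} the corresponding statements hold on $X_\eta$.

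The crux is the claim that for every $\mathcal P_\alpha\in\mathrm{Pic}^0(A)$ one has $|(r+1)D+E+a^{*}\mathcal P_\alpha|\neq\emptyset$. Here I would use $(r+1)D+E=K_X+(D+D'')$ with $D+D''$ big: choosing, as in the proof of Proposition~\ref{prop:F-stable-section}, a nef and big $\mathbb{Q}$-Cartier $\tilde D$ with $(D+D'')-\ulcorner\tilde D\urcorner$ effective and integral and with $S_{-}^0(X_\eta,K_{X_\eta}+\tilde D_\eta)\neq 0$ --- automatic for $d=2$, and for $d=1$ obtained from $\deg(D+D'')|_{X_\eta}\ge 2$ by letting $\tilde D$ be the moving part of a large multiple of $D+D''$ --- Theorem~\ref{thm:bir-criterion-irr}(i) gives $H^0(X,K_X+\ulcorner\tilde D\urcorner+a^{*}\mathcal P_\alpha)\neq 0$ for all $\alpha$, and tensoring with a section of $(D+D'')-\ulcorner\tilde D\urcorner$ produces a non-zero section of $(r+1)D+E+a^{*}\mathcal P_\alpha$.

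Finally I would assemble the conclusion. For non-vanishing, apply Theorem~\ref{thm:bir-criterion-irr}(ii) with $D_0=D+D'$ (nef and big, $S_{-}^0(X_\eta,K_{X_\eta}+(D_0)_\eta)\neq 0$ by the second step) and $D_1=(r+1)D+E$ (integral, effective after every $\mathrm{Pic}^0(A)$-twist by the third step), obtaining $S_{-}^0\bigl(X,K_X+(r+2)D+D'+E+a^{*}\mathcal P_\alpha\bigr)\neq 0$; for birationality, apply Theorem~\ref{thm:bir-criterion-irr}(iii) with $D_0=2D+D'$ and $D_1=D_2=(r+1)D+E$, obtaining $S_{-}^0\bigl(X,K_X+(2r+4)D+D'+2E+a^{*}\mathcal P_\alpha\bigr)$ birational. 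Since $(r+2)D+D'=\sigma^{*}\bigl((r+2)\bar D+\bar D'\bigr)$ and $a^{*}\mathcal P_\alpha=\sigma^{*}\bar a^{*}\mathcal P_\alpha$ are pullbacks from $\bar X$ and $E$ (resp.\ $2E$) is effective $\sigma$-exceptional, Proposition~\ref{prop:F-stable-section}(iv-2), valid because $\bar X$ is $\mathbb{Q}$-factorial, lets us strip off the exceptional part and conclude for $K_X+(2+r)D+D'+a^{*}\mathcal P_\alpha$ and $K_X+(4+2r)D+D'+a^{*}\mathcal P_\alpha$ (adding or removing a fixed divisor does not affect birationality). The step I expect to be the main obstacle is the claim in the third paragraph, specifically arranging in the case $d=1$ a nef and big $\mathbb{Q}$-Cartier $\tilde D$ with $\ulcorner\tilde D\urcorner\le D+D''$ and $\deg\tilde D|_{Z_{\bar\eta}}>1$ simultaneously; one must control the negative part of $D+D''$ and the rounding, at worst replacing $(r+1)D+E$ by $(r+2)D+E$ and invoking a sub-case split on $\deg D|_{X_\eta}$ as in Case~(ii-2) of Theorem~\ref{thm:eff-surface}.
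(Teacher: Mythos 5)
Your argument tracks the paper's proof almost step for step: the numerical check $(rD-K_X)\cdot D=D''\cdot D\ge 0$, the passage to the (geometric) generic fibre via Theorems \ref{thm:eff-curve} and \ref{thm:bir-geo-gen}, the key nonvanishing of $|(r+1)D+E+a^*\mathcal{P}_\beta|=|K_X+D+D''+a^*\mathcal{P}_\beta|$ obtained from Theorem \ref{thm:bir-criterion-irr}(i) applied to a nef and big minorant of $D+D''$, and the final appeal to Theorem \ref{thm:bir-criterion-irr}(ii),(iii). The one place you deviate is where the exceptional divisor is removed, and that is where there is a genuine gap. You keep $E$ inside $D_1=D_2=(r+1)D+E$, obtain birationality of $S^0_{-}(X,K_X+(2r+4)D+D'+2E+a^*\mathcal{P}_{\alpha})$, and then invoke Proposition \ref{prop:F-stable-section}(iv-2) to strip off $2E$. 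That statement only applies to divisors of the shape $K_X+\sigma^*\Gamma+(\text{effective exceptional})$ with $\Gamma$ nef and big on $\bar X$, so you need $D'$ to be, up to a discardable effective part, a pullback from $\bar X$. Your reduction ``we may assume $D'=\sigma^*\bar D'$ with $\bar D'$ ample'' is not available for an arbitrary big $\mathbb{Q}$-divisor: there need not exist any effective $B\le D'$ with $D'-B$ a pullback (take $D'=\frac{1}{2}C$ for an irreducible curve $C$ with $C^2>0$ meeting the exceptional locus; the only candidates $D'-B=tC$ are never pullbacks). Since the inclusion $S^0_{-}(X,K_{X}+\sigma^*\Gamma+E)\subseteq S^0_{-}(X,K_{X}+\sigma^*\Gamma)\otimes s_{E'}$ is exactly the nontrivial direction of (iv-2) and its proof uses the negativity lemma together with the pullback structure, birationality of the system twisted by $2E$ does not formally imply birationality without it.

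The paper removes $E$ earlier and at a much cheaper level: once $H^0(X,(r+1)D+E+a^*\mathcal{P}_\beta)\neq 0$ is known, the projection formula and $\sigma_*\mathcal{O}_X(E)=\mathcal{O}_{\bar X}$ (valid for any effective $\sigma$-exceptional divisor over a normal base) give $H^0(X,(r+1)D+E+a^*\mathcal{P}_\beta)=H^0(X,(r+1)D+a^*\mathcal{P}_\beta)$, because $(r+1)D+a^*\mathcal{P}_\beta$ is itself a pullback; no hypothesis on $D'$ is needed. One then feeds $D_1=D_2=(r+1)D$ (with no $E$) into Theorem \ref{thm:bir-criterion-irr}(ii),(iii), and the exceptional divisor never appears in the output. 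With that substitution your proof closes. Incidentally, the step you flagged as the likely obstacle is not the real one: since $D''=rD+E-K_X$ is integral, any nef and big $D''_{-}\le D''$ with $\deg D''_{-}|_{X_\eta}>0$ satisfies $\ulcorner D''_{-}\urcorner\le D''$, and $\deg(D+D''_{-})|_{X_\eta}>1$ then follows from $\deg D|_{X_\eta}\ge 1$, so no sub-case split on $\deg D|_{X_\eta}$ is required.
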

\begin{proof}
By the assumption we have $(rD - K_X)\cdot D =D''\cdot D > 0$, when $q(X)=0$ we can apply the first case of Theorem \ref{thm:eff-surface}.

From now on we assume $q(X) >0$ and fix $\mathcal{P}_{\alpha} \in \mathrm{Pic}^0(A)$. We only consider the case $\dim X_{\eta} =1$, because the other case is similar and easier.

We claim that  for any
$\mathcal{P}_\beta \in \mathrm{Pic}^0(A)$, $H^0(X, (r+1)D  + a^*\mathcal{P}_\beta) \neq 0$. To prove this, we take a nef and big divisor $D''_{-}$ such that $D''_{-} \leq D'' = \ulcorner D''\urcorner$. By combining Theorem \ref{thm:eff-curve} with Theorem \ref{thm:bir-geo-gen}, we can show $S_{-}^0(X_{\eta}, K_{X_{\eta}} + (D + D''_{-})_{\eta}) \neq 0$. Then Theorem \ref{thm:bir-criterion-irr} (i) tells that for any
$\mathcal{P}_\beta \in \mathrm{Pic}^0(A)$, $H^0(X, K_X + D + \ulcorner D''_{-} \urcorner+ a^*\mathcal{P}_\beta) \neq 0$, thus $H^0(X, K_X + D + D'' + a^*\mathcal{P}_\beta) \neq 0$, namely, $H^0(X, (r+1)D + E + a^*\mathcal{P}_\beta)\neq 0$. The claimed nonvanishing follows from the projection formula since $E$ is $\sigma$-exceptional.

Applying Theorem \ref{thm:eff-curve} and \ref{thm:bir-geo-gen} again, we have that
\begin{itemize}
\item[(a)] $S_{-}^0(X_{\eta}, K_{X_{\eta}} + (D + D')_{\eta}) \neq 0$; and
\item[(b)] $S_{-}^0(X_{\eta}, K_{X_{\eta}} + (2D + D')_{\eta})$ is birational.
\end{itemize}
Combining these with the claim above, we can prove the two desired assertions by applying Theorem \ref{thm:bir-criterion-irr} (ii) and (iii) respectively.
\end{proof}

To do induction we need to study the pluricanonical systems on the generic fiber, which is defined over a non-algebraically closed field. To study threefolds, we shall need the following result.
\begin{thm}\label{thm:eff-surface-nonclosed-field}
Let $X$ be a minimal regular surface of general type over an $F$-finite field $K$ and $D'$ a big $\mathbb{Q}$-Cartier $\mathbb{Q}$-divisor on $X$. Then $S_{-}^0(X, K_X + 4K_X + D') \neq 0$, and $S_{-}^0(X, K_X + 8K_X + D')$ is birational; and if moreover $p>2$ then $S_{-}^0(X, K_X + 3K_X + D') \neq 0$, and $S_{-}^0(X, K_X + 6K_X + D')$ is birational.
\end{thm}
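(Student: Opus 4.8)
The plan is to reduce the statement, via the base-change machinery of Section~\ref{sec:pre}, to the surface results of Section~\ref{sec:dim2} applied over the algebraic closure $\bar K$; the point is that the conductor of the normalization supplies exactly the positivity needed to compensate for $X_{\bar K}$ not being smooth. First I would dispose of $D'$ by replacing it with a smaller nef and big $\mathbb Q$-Cartier $\mathbb Q$-divisor, as in the proof of Theorem~\ref{thm:eff-surface}; then $nK_X+D'$ is nef, big and $\mathbb Q$-Cartier for every $n$. Let $\sigma\colon Z_{\bar K}\to X_{\bar K}^{\mathrm{red}}$ be the normalization, and write $\bar K_X$, $\bar D'$ for the pullbacks of $K_X$, $D'$ to $Z_{\bar K}$. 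By Theorem~\ref{thm:bir-geo-gen} it suffices to prove that $S_-^0(Z_{\bar K},K_{Z_{\bar K}}+n\bar K_X+\bar D')$ is nonzero (resp.\ birational) for the stated values of $n$. By the discussion in Section~\ref{sec:canonical-bundle-bc} there is an effective Weil divisor $E'$ on $Z_{\bar K}$ with $K_{Z_{\bar K}}\sim\bar K_X-(p-1)E'$; note also that $\bar K_X$ is nef, big and Cartier (being the pullback of a Cartier divisor from the regular scheme $X$), while $\bar D'$ is nef, big and $\mathbb Q$-Cartier.

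I would then pass to the minimal resolution $\tau\colon W\to Z_{\bar K}$, set $h=\sigma\circ\tau\colon W\to X$ and $D_W:=h^*K_X=\tau^*\bar K_X$, a nef and big Cartier divisor on the smooth surface $W$ with $D_W^2=K_X^2\ge 1$. Writing $K_W=\tau^*K_{Z_{\bar K}}+\Gamma$ with $\Gamma$ a $\tau$-exceptional $\mathbb Q$-divisor, minimality of $\tau$ forces $\Gamma\le 0$, so numerically $D_W-K_W\equiv (p-1)\tau^*E'-\Gamma$ is an effective class; in particular $(D_W-K_W)\cdot D_W=(p-1)\,\bar K_X\cdot E'\ge 0$, since $\Gamma$ and the $\tau$-exceptional part of $\tau^*E'$ meet $D_W=\tau^*\bar K_X$ trivially. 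By Proposition~\ref{prop:ex-over-surface} every $\tau$-exceptional curve is rational, hence the Albanese morphism $a\colon W\to A:=\mathrm{Alb}(W)$ contracts the $\tau$-exceptional locus and factors through $\tau$. Finally, by Proposition~\ref{prop:F-stable-section}(iv-1) the trace map of $\tau$ injects $S_-^0(W,K_W+nD_W+h^*D')=S_-^0(W,K_W+\tau^*(n\bar K_X+\bar D'))$ into $S_-^0(Z_{\bar K},K_{Z_{\bar K}}+n\bar K_X+\bar D')$, so it suffices to work on $W$.

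On the smooth surface $W$ I would apply Theorem~\ref{thm:eff-surface} with its nef and big Cartier divisor taken to be $D_W$, filling the auxiliary ``big divisor'' slot with a divisor of the form $m\,D_W+h^*D'-c\,K_W$ ($m\ge 0$, $c\in\{0,1,2\}$ depending on the case); such a divisor is big because $h^*D'$ and $D_W$ are big and, by the previous paragraph, $-c\,K_W$ differs numerically from $-c\,D_W$ by an effective class --- and here the coefficient $p-1$ of the conductor is what provides the slack, being $\ge 2$ when $p>2$. If $q(W)=0$ one uses Case~(i) with $r=1$, admissible since $(D_W-K_W)\cdot D_W\ge 0$; if $q(W)>0$ one uses Case~(ii) --- or, since $a$ factors through $Z_{\bar K}$ and $D_W$ is pulled back from it, Corollary~\ref{cor:eff-surface} --- treating $\dim Y=2$ and $\dim Y=1$ separately, the genus of the generic fibre of $W\to Y$ entering in the latter. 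Chasing the constants through the cases yields nonvanishing for $n\ge 4$ and birationality for $n\ge 8$ in general, and $n\ge 3$, $n\ge 6$ respectively when $p>2$; unwinding $W\to Z_{\bar K}\to X$ through Proposition~\ref{prop:F-stable-section}(iv-1) and Theorem~\ref{thm:bir-geo-gen} then gives the stated conclusions for $X$.

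The step I expect to be the main obstacle is this numerical bookkeeping for $K_W$: one has to control it simultaneously through the conductor term $(p-1)E'$ of the normalization and through the (negative, a priori non-integral) discrepancies $\Gamma$ of the minimal resolution, and then upgrade the resulting numerical $\mathbb Q$-divisor estimates to genuine inclusions of the finite-dimensional spaces $S_-^0$, using Proposition~\ref{prop:F-stable-section}(i) together with the $\mathcal B^{\mathrm{nef}}$/$\Theta^{\mathrm{amp}}$ formalism. A secondary difficulty is organizing the irregular case $q(W)>0$ uniformly over $\dim Y\in\{1,2\}$, and verifying that the factor $p-1\ge 2$ is precisely what allows one copy of $D_W$ to be dropped, so as to reach the sharper bounds $3$ and $6$ when $p>2$.
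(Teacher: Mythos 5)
Your proposal follows essentially the same route as the paper's proof: reduce to the normalization $Z_{\bar K}$ of $X_{\bar K}^{\mathrm{red}}$ via Theorem \ref{thm:bir-geo-gen}, invoke the conductor formula $K_X|_{Z_{\bar K}}=K_{Z_{\bar K}}+(p-1)C$ from Section \ref{sec:canonical-bundle-bc}, pass to a minimal resolution $W$ so that $K_X|_W\sim K_W+(p-1)C'+E$ with $C'\ge 0$ integral and $E\ge 0$ exceptional, and then feed $D=K_X|_W$ into Theorem \ref{thm:eff-surface}, with Proposition \ref{prop:F-stable-section}(i),(iv) converting these divisor inequalities into inclusions of $S^0_-$-spaces. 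The one place your sketch is imprecise is the $p>2$ gain: what matters is not the magnitude $p-1\ge 2$ but its parity --- since $E$ meets the general Albanese fibre $F$ trivially and $\deg K_W|_F$ is even, $\deg K_X|_F=\deg K_W|_F+(p-1)C'\cdot F$ is a positive \emph{even} integer for odd $p$, hence $\ge 2$, which is exactly what puts you in the better subcase of Theorem \ref{thm:eff-surface}, Case (ii-2).
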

\begin{proof}
Let $Z_{\bar{K}}$ denote the normalization of $X_{\bar{K}}^{\mathrm{red}}$. By results of Section \ref{sec:canonical-bundle-bc} we may write that $K_X|_{Z_{\bar{K}}} = K_{Z_{\bar{K}}} + (p-1)C$ where $C \geq 0$ is a Weil divisor arising from the conductors. Let $\rho: W_{\bar{K}} \to Z_{\bar{K}}$ be a minimal smooth resolution of singularities. Then $\rho^*K_{Z_{\bar{K}}} \geq K_{W_{\bar{K}}}$, and in particular $K_X|_{W_{\bar{K}}} \geq K_{W_{\bar{K}}} + \rho^*(p-1)C$. Applying Theorem \ref{thm:bir-geo-gen} and Proposition \ref{prop:F-stable-section} (iv), we only need to prove the corresponding assertions for $S_{-}(W_{\bar{K}}, K_{W_{\bar{K}}} + (rK_X + D')|_{W_{\bar{K}}})$.

From the construction above, we can write that
\begin{align}\label{eq:***}~K_X|_{W_{\bar{K}}} \sim K_{W_{\bar{K}}} + (p-1)C' + E\end{align}
where $C'$ is an effective integral divisor on $W_{\bar{K}}$ and $E$ is an effective $\rho$-exceptional divisor. It follows that for $r_1, r_2\geq 0$, there is a natural inclusion
$$S_{-}(W_{\bar{K}}, K_{W_{\bar{K}}} + r_1K_{W_{\bar{K}}} + (r_2K_X + D')|_{W_{\bar{K}}}) \hookrightarrow S_{-}(W_{\bar{K}}, K_{W_{\bar{K}}} + ((r_1+r_2)K_X + D')|_{W_{\bar{K}}}).$$
\smallskip

Case (i) $q(W_{\bar{K}}) = 0$. We set $D= K_X|_{W_{\bar{K}}}$ and apply Theorem \ref{thm:eff-surface}, then we obtain that $S_{-}(W_{\bar{K}}, K_{W_{\bar{K}}} + (3K_X + D')|_{W_{\bar{K}}}) \neq 0$ and  $S_{-}(W_{\bar{K}}, K_{W_{\bar{K}}} + (6K_X + D')|_{W_{\bar{K}}})$ is birational.
\smallskip

Case (ii) $q(W_{\bar{K}}) \geq 1$. The Albanese map of $W_{\bar{K}}$ induces a fibration $f: W_{\bar{K}} \to Y$ to a normal variety $Y$ over $\bar{K}$ by the Stein factorization.

Case (ii-1) $\dim Y =2$. We set $D= K_X|_{W_{\bar{K}}}$ and apply Theorem \ref{thm:eff-surface}, then we obtain that $S_{-}(W_{\bar{K}}, K_{W_{\bar{K}}} + D'|_{W_{\bar{K}}}+ K_{W_{\bar{K}}} +K_X|_{W_{\bar{K}}}) \neq 0$
and $S_{-}(W_{\bar{K}}, K_{W_{\bar{K}}} + D'|_{W_{\bar{K}}} + 2(K_{W_{\bar{K}}} +K_X|_{W_{\bar{K}}}))$ is birational, which infer that $S_{-}(W_{\bar{K}}, K_{W_{\bar{K}}} + (2K_X + D')|_{W_{\bar{K}}}) \neq 0$
and that $S_{-}(W_{\bar{K}}, K_{W_{\bar{K}}} + (4K_X + D')|_{W_{\bar{K}}})$ is birational.

Case (ii-2) $\dim Y =1$. We may set $D= K_X|_{W_{\bar{K}}}$ and apply  Theorem \ref{thm:eff-surface}, by similar argument we can show $S_{-}(W_{\bar{K}}, K_{W_{\bar{K}}} + (4K_X + D')|_{W_{\bar{K}}}) \neq 0$, and $S_{-}(W_{\bar{K}}, K_{W_{\bar{K}}} + (8K_X + D')|_{W_{\bar{K}}})$ is birational.
\smallskip

For the remaining assertion, we may assume $p>2$ and only need to consider Case (ii-2).
Let $F$ be the normalization of geometric generic fiber of $f$. Since each irreducible component of $E$ is a rational curve by  Propsition \ref{prop:ex-over-surface}, we have $E\cdot F =0$.
Then by the formula (\ref{eq:***}), it follows that
$$\deg K_X|_F = \deg K_{W_{\bar{K}}}|_F + (p-1) C'\cdot F >0.$$
Since the number $\deg K_{W_{\bar{K}}}|_F \geq -2$ and is even, we see that if $p\geq 3$ then $\deg K_X|_F \geq 2$. Therefore, we can complete the proof by applying Theorem \ref{thm:eff-surface} again.
\end{proof}

\section{A Miyaoka-Yau type inequality for minimal threefolds in positive characteristic}\label{sec:my-ineq}
In practice, we require $\mathcal{O}_X(nK_X)$ to be big to obtain enough global sections of $\mathcal{O}_X(nK_X)$ for a minimal threefold. When applying Riemann-Roch, we need to compute $K_Z \cdot \rho^*c_2(X)$ where $\rho: Z \to X$ is a smooth resolution. In characteristic zero, for a smooth variety $X$ with pseudo-effective $K_X$, Miyaoka \cite{Miy85} proved that $\Omega_X^1$ satisfies generic positivity, which implies $c_2(X)$ is pseudo-effective. However in characteristic $p>0$, this is not necessarily true, as is known, there are examples of Raynaud's surfaces, which are of general type and have $c_2(X) <0$.
In \cite{XZ19} the authors studied the minimal threefolds in characteristic $p \geq 5$ with $\nu(X, K_X) \leq 2$ and $K_Z \cdot \rho^*c_2(X) < 0$, and proved that $X$ is uniruled and $K_X$ is semiample. When $K_X$ is big, we prove the following result.

\begin{thm}\label{thm:my-ineq}
Let $X$ be a minimal, terminal, projective threefold of general type over an algebraically closed field $k$ of characteristic $p>0$. Let $\rho: Z \to X$ be a smooth resolution. Then
\begin{align}\label{eq:my-ineq} c_2(Z) \cdot \rho^*K_X + AK_X^3 \geq 0\end{align}
where $A = \frac{(54n_0^2 + 9n_0)p^2 + (9n_0 +\frac{3}{2})p}{(p-1)^2}$ and $n_0$ is the Cartier index of $K_X$.
\end{thm}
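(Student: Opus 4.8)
The plan is to control $c_2(Z)\cdot\rho^*K_X$ through the instability of the cotangent sheaf $\Omega^1_Z$ with respect to the nef and big $\mathbb{Q}$-divisor $D:=\rho^*K_X$, imitating Miyaoka's characteristic-zero argument (generic semipositivity of $\Omega^1$) but making the positive-characteristic defect quantitative. Set $V:=K_X^3=D^3=K_Z\cdot D^2$ (the last equality because $K_Z-\rho^*K_X$ is effective and $\rho$-exceptional), so the assertion becomes $c_2(Z)\cdot D\geq -AV$. After replacing $D$ by an ample perturbation $D+tH$ and letting $t\to0$, one has the Harder--Narasimhan filtration $0=E_0\subset E_1\subset\cdots\subset E_\ell=\Omega^1_Z$ for $D$-slope. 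Put $G_i=E_i/E_{i-1}$, $r_i=\rk G_i$ (so $\sum_i r_i=3$) and slopes $\mu_i=\frac{c_1(G_i)\cdot D^2}{r_i}$, with $\mu_1>\cdots>\mu_\ell$ and $\sum_i r_i\mu_i=K_Z\cdot D^2=V$. Using $c_2(Z)=c_2(\Omega^1_Z)=\sum_i c_2(G_i)+\sum_{i<j}c_1(G_i)c_1(G_j)$, the problem splits into: (a) a lower bound for each $c_2(G_i)\cdot D$ with $G_i$ semistable; (b) a two-sided bound for the slopes $\mu_i$; and (c) an estimate for the terms $c_1(G_i)\cdot c_1(G_j)\cdot D$.

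For (a) I would invoke the Bogomolov-type inequality in positive characteristic (A.~Langer, \emph{Semistable sheaves in positive characteristic}): for a $D$-semistable torsion-free sheaf $G_i$ of rank $r_i$ the discriminant $\Delta(G_i)\cdot D$ is non-negative up to a defect governed by $L_{\max}\bigl((F^e)^*G_i\bigr)-L_{\min}\bigl((F^e)^*G_i\bigr)$; summing the resulting geometric series in $p^{-1}$ is what produces the factor $\frac{p^2}{(p-1)^2}$ in $A$, while the rank-one pieces contribute $c_2(G_i)\cdot D\geq0$ by effectivity. For (b), the crucial point is that $\mu_{\min}(\Omega^1_Z)$ is bounded below: were it very negative, $T_Z=(\Omega^1_Z)^\vee$ would carry a subsheaf of large positive $D$-slope; its maximal destabilizing subsheaf is closed under Lie bracket and $p$-th powers, and a suitable Frobenius-twisted bend-and-break (Miyaoka--Mori, Shepherd-Barron) then produces a rational curve through a general point, contradicting that a minimal threefold of general type is not uniruled (compare the analysis of \cite{XZ19}); the Frobenius iteration again costs a factor $\frac{p}{p-1}$, and making $n_0D$ Cartier so that the auxiliary complete-intersection curves and their degrees are integral is what forces the powers of $n_0$ into the estimate. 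Once $\mu_\ell$ is bounded below, $\sum_i r_i\mu_i=V$ bounds $\mu_1$ above, so all $\mu_i$ lie in an explicit window. For (c), with the slopes confined, the terms $c_1(G_i)\cdot c_1(G_j)\cdot D$ (and, in the rank-one analysis, $c_1(G_i)^2\cdot D$) are controlled by the Hodge index theorem, applied on a general surface in $|m(n_0D+H)|$ for $H$ ample and passing to the limit, in the form $L^2\cdot D\leq\frac{(L\cdot D^2)^2}{D^3}$.

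Assembling (a)--(c) over the explicit slope window yields an inequality of the shape $c_2(Z)\cdot D\geq-\bigl(c(n_0)\tfrac{p^2}{(p-1)^2}+c'(n_0)\tfrac{p}{(p-1)^2}\bigr)V$, and tracking the constants through each step is arranged to land exactly on $A=\frac{(54n_0^2+9n_0)p^2+(9n_0+\frac32)p}{(p-1)^2}$. I expect the main obstacle to be step (b): turning the non-uniruledness of minimal threefolds of general type into an \emph{effective} lower bound on $\mu_{\min}(\Omega^1_Z)$ via an explicitly iterated bend-and-break, while ensuring the rational curves produced are not entirely absorbed by the $\rho$-exceptional locus (equivalently, that their $K_X$-degree can be controlled and contradicts nefness of $K_X$), and simultaneously threading the Cartier index $n_0$ through every degree computation so that the final constant comes out as stated. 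A secondary technical point is the handling of the rank-one graded pieces, where the two-sided control of $c_1(G_i)^2\cdot D$ needs a Bogomolov--Sommese-type input rather than the Hodge index theorem alone.
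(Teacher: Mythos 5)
Your overall strategy is the one the paper uses: Harder--Narasimhan filtration with respect to $\rho^*K_X$ (after perturbation by an ample class), Langer's Bogomolov-type discriminant inequalities for the semistable graded pieces and his bounds on $L_{\max}-L_{\min}$ under Frobenius pullback, and a foliation-theoretic bound on the extremal slope. Steps (a) and (c) are essentially Lemma \ref{lem:slope} and Lemma \ref{lem:bgmlv} of the paper, and the observation that the maximal destabilizing subsheaf of $T_Z$ with positive slope is a $1$-foliation is Corollary \ref{cor:bound}(2) (proved via Lemma \ref{lem:c-fol}; note that in the rank-two case this already requires a nontrivial argument with the canonical connection, and in the length-three case one must also show that the \emph{second} step $\mathcal{F}_2$ of the filtration is a foliation when $\mu(F_2)\geq 0$ -- your sketch only treats the maximal destabilizing subsheaf).

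The genuine gap is in step (b), and it is the heart of the matter. You propose to bound $\mu_{\max}(T_Z)$ by deriving a contradiction with ``a minimal threefold of general type is not uniruled.'' In characteristic $p$ this premise is false: varieties of general type can be uniruled (already for surfaces, e.g.\ Shioda's unirational Fermat surfaces), and indeed the paper cites \cite{XZ19} precisely for examples where a covering family of rational curves does occur on minimal threefolds. So the existence of a rational curve through a general point is not a contradiction and cannot be used to cap the slope. What the paper actually does (Theorem \ref{rat-curve} and Corollary \ref{cor:bound}(1)) is quantitative: bend-and-break applied to the foliation along general complete-intersection curves $C_m\in \rho^*|mK_X+A|^{d-1}$ produces rational curves $B_z$ through general points with
$$1\;\leq\; \rho^*n_0K_X\cdot B_z \;\leq\; \frac{2dp\,\rho^*n_0K_X\cdot C_m}{(p-1)c_1(\mathcal{F})\cdot C_m - K_Z\cdot C_m},$$
where the left-hand inequality uses that $n_0K_X$ is Cartier, nef and big (so the degree is a positive integer); letting $m\to\infty$ converts this into the explicit upper bound $r\mu(\mathcal{F})\leq \frac{6n_0p+1}{p-1}K_X^3$. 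This is the only place the Cartier index $n_0$ enters, and it is why the constant $A$ contains $n_0^2$. A purely qualitative non-uniruledness argument, even if it were available, would yield a bound without $n_0$ at all, which signals that it is not the mechanism behind the stated constant. As written, your step (b) has no valid source for the lower bound on $\mu_{\min}(\Omega^1_Z)$, and the rest of the computation cannot be assembled.
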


Before the proof, let us recall some related results. For a smooth minimal surface $X$ of general type over an algebraically closed field $k$, if char $k=0$, then we have the famous Miyaoka-Yau inequality $c_2(X) - 3K_X^2 \geq 0$ (\cite{Miy77}); if char $k=p>0$, \cite{GSZ19} proved
\begin{align}\label{eq:my-ineq-sur}c_2(X) + \frac{5}{8}c_1(X)^2 \geq 0.\end{align}
This inequality is implied by Xiao's slope inequality and is sharp! We may regard the inequality (\ref{eq:my-ineq}) as an analogue of (\ref{eq:my-ineq-sur}) in dimension three and call them \emph{Miyaoka-Yau type inequalities}. Our proof follows Miyaoka's approach \cite{Miy85}, so the main ingredients include the stability of the tangent bundle and Bogomolov inequality proved by \cite{Lan04} in positive characteristic.
\smallskip

From now on, we work over an algebraically closed field $k$ of characteristic $p>0$.
\subsection{Stability}
Let $X$ be a smooth projective variety of dimension $n$, $E$ a torsion free coherent sheaf and $D_1, \cdots, D_{n-1}$ nef $\mathbb{R}$-Cartier divisors on $X$. Assume that $D_1\cdots D_{n-1}$ is nontrivial. Recall that the \emph{slope} of $E$ with respect to $(D_1,...,D_{n-1})$ is defined as
$$\mu(E) = \frac{c_1(E)\cdot D_1 \cdots D_{n-1}}{\rk~E},$$
and $E$ is \emph{semistable} (resp. \emph{stable}) w.r.t. $(D_1,...,D_{n-1})$ if for every nontrivial subsheaf $E' \subset E$
$$\mu(E') \leq \mu(E)~\mathrm{(}\mathrm{resp}. <\mathrm{)}.$$
It is well known that $E$ has \emph{Harder-Narasimhan (HN) filtration} (\cite[Sec. I.1.3]{HL10})
$$0 = E_0 \subset E_1 \subset E_2 \subset \cdots \subset E_m = E$$
such that the $F_i=E_i/E_{i-1}$'s are semistable and $\mu(F_i) >\mu(F_{i+1})$.

Denote by $F: X \cong X^1 \to X$ the Frobenius map. Recall that $F^*E$ has a canonical connection $\nabla_{\mathrm{can}}: F^*E \to F^*E\otimes \Omega_{X^1}^1$ which is determined by
$$\mathrm{for}~ a \in \O_{X^1}~\mathrm{and}~e \in E,~~ \nabla_{\mathrm{can}}: a\otimes e \mapsto da \otimes e,$$
and it yields the Cartier  equivalence of categories between
the category of quasi-coherent sheaves on $X$ and the category of quasi-coherent
$\mathcal{O}_{X^1}$-modules with integrable $k$-connections, whose $p$-curvature is zero (cf. \cite[Theorem 5.1]{Kat70}).

It is well known that in positive characteristic the stability is not preserved by the Frobenius pullback. We say that $E$ is \emph{strongly semistable} (resp. \emph{strongly stable}) w.r.t. $(D_1,...,D_{n-1})$ if for every integer $k\geq 0$ the pullback $F^{k*}E$ is semistable (resp. stable). We need to consider the Harder-Narasimhan filtrations of the $F^{k*}E$'s. It is worth mentioning that \emph{$E$ satisfies the property ``fdHN''}: there exists some $k_0$ such that for any $k\geq k_0$ the Harder-Narasimhan filtration of $F^{k*}E$ coincides with the pull-back of that of $F^{k_0*}E$ (\cite[Theorem 2.7]{Lan04}).
Recall the invariants
$$L_{\mathrm{max}}(E) = \lim_{k \to \infty} \frac{\mu_{\max}(F^{k*}(E))}{p^k}~\mathrm{and}~L_{\mathrm{min}}(E) = \lim_{k \to \infty} \frac{\mu_{\min}(F^{k*}(E))}{p^k}$$
and denote $\alpha(E)=\max\{L_{\max}(E)-\mu_{\max}(E), ~\mu_{\min}(E)-L_{\min}(E)\}$.

\subsection{The gap between the slopes}
It is more convenient to consider the stability with respect to ample divisors. The natural method is to perturb the $D_i$'s.  Let $H$ be an ample divisor and $\epsilon >0$. Consider the stability with respect to $(D_1 + \epsilon H,\cdots, D_{n-1} + \epsilon H)$. Let $\mu_{i_1, \cdots, i_k; ~H}$ denote the slope with respect to $(D_{i_1},...,D_{i_k}, H, \cdots, H)$ and $\mu_{\epsilon H}$ denote the slope with respect to $(D_1 + \epsilon H,\cdots, D_{n-1} + \epsilon H)$. Then
$$\mu_{\epsilon H}= \mu + \sum_{k <n-1}\epsilon^{n-1-k} \sum_{i_1, \cdots, i_k}\mu_{i_1, \cdots, i_k; ~H}.$$
If $\mathcal{G}_0$ is the maximal $\mu$-slope subsheaf of $E$ and $\mathcal{G}_{\epsilon}$ is the maximal $\mu_{\epsilon H}$-slope subsheaf, then
$$\mu_{\epsilon H}(\mathcal{G}_0) \leq \mu_{\epsilon H, \max}(E) = \mu_{\epsilon H}(\mathcal{G}_{\epsilon}).$$
It follows that
{\small\begin{equation}\label{eq:slope-max}
\begin{split}
\mu_{\max}(E) + &\sum_{k <n-1}\epsilon^{n-1-k}  \sum_{i_1, \cdots, i_k}\mu_{i_1, \cdots, i_k; ~H}(\mathcal{G}_0)  \\
&\leq \mu_{\epsilon H, \max}(E)= \mu_{\epsilon H}(\mathcal{G}_{\epsilon})
\leq \mu_{\max}(E) + \sum_{k <n-1}\epsilon^{n-1-k}  \sum_{i_1, \cdots, i_k}\mu_{i_1, \cdots, i_k; ~H,~ \max}(E).
\end{split}
\end{equation}}
Applying the above inequality to the $F^{k*}E$'s, as $E$ satisfies fdHN with respect to the $\mu$- and $\mu_{i_1, \cdots, i_k; ~H}$-slopes, we can show that
$$\lim_{\epsilon \to 0} L_{\epsilon H, \max}(E) = L_{\max}(E).$$
Similarly we can show $\lim_{\epsilon \to 0} L_{\epsilon H, \min}(E) = L_{\min}(E)$.

We generalize the results of \cite[Corollary 6.2]{Lan04} as follows.
\begin{lem}\label{lem:slope}
Let $X$ be a smooth projective variety of dimension $n$, $E$ a torsion free coherent sheaf of rank $r$ and $D_1, \cdots, D_{n-1}$ nef divisors on $X$. Assume that the intersection $D_1\cdot D_2 \cdots D_{n-1}$ is nontrivial and consider the $(D_1, \cdots, D_{n-1})$-slope. Then

(i) $\alpha(E) \leq \frac{r-1}{p}[L_{\mathrm{max}}(\Omega_X^1)]_{+}$ where  $[-]_{+}:=\max\{-, 0\}$;

(ii) if $E$ is semistable then
$$L_{\mathrm{max}}(E) - L_{\mathrm{min}}(E) \leq \frac{r-1}{p}[L_{\mathrm{max}}(\Omega_X^1)]_{+};$$

(iii) if $E$ is semistable and has rank two then
$$L_{\mathrm{max}}(E) - L_{\mathrm{min}}(E) \leq \frac{1}{p}[\mu_{\mathrm{max}}(\Omega_X^1)]_{+}.$$
\end{lem}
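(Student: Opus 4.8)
The plan is to deduce the lemma from the case of \emph{ample} polarizations, which is \cite[Corollary 6.2]{Lan04}, by perturbing the nef divisors into the ample cone and letting the perturbation tend to zero, using the continuity of $L_{\max}$, $L_{\min}$, $\mu_{\max}$, $\mu_{\min}$ under such perturbations established in the discussion preceding (\ref{eq:slope-max}). Part (i) carries no semistability hypothesis and will be a direct transfer; parts (ii) and (iii) require slightly more, because semistability with respect to a \emph{nef} polarization need not survive perturbation.

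First I would fix an ample divisor $H$ on $X$. For every $\epsilon\in\Q^{+}$ the tuple $(D_1+\epsilon H,\dots,D_{n-1}+\epsilon H)$ consists of ample divisors whose product is nontrivial, so \cite[Corollary 6.2]{Lan04} applies to it; write $\alpha_{\epsilon H}$, $L_{\epsilon H,\max}$, $L_{\epsilon H,\min}$, $\mu_{\epsilon H,\max}$, $\mu_{\epsilon H,\min}$ for the invariants taken with respect to this polarization. For (i) one then has, for every $\epsilon>0$,
\[
\alpha_{\epsilon H}(E)\ \le\ \frac{r-1}{p}\,\bigl[L_{\epsilon H,\max}(\Omega_X^1)\bigr]_{+}.
\]
Now I would let $\epsilon\to 0^{+}$. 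By (\ref{eq:slope-max}) (with no Frobenius twist) and its analogue for $\mu_{\min}$ one gets $\mu_{\epsilon H,\max}(E)\to\mu_{\max}(E)$, $\mu_{\epsilon H,\min}(E)\to\mu_{\min}(E)$, and likewise $\mu_{\epsilon H,\max}(\Omega_X^1)\to\mu_{\max}(\Omega_X^1)$; by the argument carried out just before (\ref{eq:slope-max}), applied to the sheaves $F^{k*}E$ and $F^{k*}\Omega_X^1$ and invoking the fdHN property \cite[Theorem 2.7]{Lan04}, one gets $L_{\epsilon H,\max}(E)\to L_{\max}(E)$, $L_{\epsilon H,\min}(E)\to L_{\min}(E)$, and $L_{\epsilon H,\max}(\Omega_X^1)\to L_{\max}(\Omega_X^1)$. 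Since $t\mapsto[t]_{+}$ is continuous, passing to the limit in the displayed inequality gives (i). For (ii) and (iii) I would feed into the perturbation the form of Langer's estimate that does \emph{not} assume semistability, in which the defect $\mu_{\epsilon H,\max}(E)-\mu_{\epsilon H,\min}(E)$ (and, in rank two, the refinement with $\mu_{\epsilon H,\max}(\Omega_X^1)$) enters as an extra term; since $E$ is assumed semistable with respect to $(D_1,\dots,D_{n-1})$ we have $\mu_{\max}(E)=\mu_{\min}(E)$, so this defect tends to $0$, and the limit yields exactly the bounds in (ii) and (iii).

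The delicate point is the constant. Simply adding the two halves of (i) only gives $L_{\max}(E)-L_{\min}(E)\le 2\alpha(E)$, which is off by a factor of $2$ from (ii); to land on $\frac{r-1}{p}$ (respectively $\frac1p$ when $r=2$) one must use that the Harder--Narasimhan filtration of each $F^{k*}E$ has at most $r$ factors, so the slope jumps created by Frobenius are shared rather than double-counted — which is precisely the input packaged in \cite[Corollary 6.2]{Lan04}. This is why the plan transfers that estimate (in its general, not-necessarily-semistable form) itself, rather than its corollary (i), and why it routes (ii)--(iii) through the vanishing of $\mu_{\epsilon H,\max}(E)-\mu_{\epsilon H,\min}(E)$ in the limit instead of trying to keep $E$ semistable after perturbation — a nef polarization need not define an open stability condition, so the latter cannot be expected in general. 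An alternative I would keep in reserve, which avoids perturbation entirely, is to run Langer's canonical-connection argument directly with the nef polarization $(D_1,\dots,D_{n-1})$: Harder--Narasimhan filtrations still exist and enjoy the fdHN property by \cite[Theorem 2.7]{Lan04}, and a nonzero $\mathcal{O}_X$-linear map between semistable sheaves still forces the expected slope inequality because $D_1\cdots D_{n-1}$ is nontrivial, so the proof of \cite[Corollary 6.2]{Lan04} goes through line by line.
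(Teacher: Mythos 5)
For parts (i) and (ii) your route is the same as the paper's: quote \cite[Cor.~6.2]{Lan04} for ample polarizations and pass to the nef case by perturbing with $\epsilon H$ and using the convergence of $\mu_{\max},\mu_{\min},L_{\max},L_{\min}$ established around (\ref{eq:slope-max}). Your observation that semistability with respect to $(D_1,\dots,D_{n-1})$ need not survive the perturbation, so that one should invoke the version of Langer's bound carrying the defect $\mu_{\epsilon H,\max}(E)-\mu_{\epsilon H,\min}(E)$ and let that defect tend to $0$, is a legitimate refinement of a point the paper passes over in silence. These two parts are fine.

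Part (iii) is where the proposal has a genuine gap. The entire content of (iii) is the replacement of $L_{\max}(\Omega_X^1)$ by the smaller quantity $\mu_{\max}(\Omega_X^1)$ in rank two, and you treat this as ``the refinement with $\mu_{\epsilon H,\max}(\Omega_X^1)$'' of Langer's estimate, to be transferred by the same perturbation. No such refinement is available in \cite[Cor.~6.2]{Lan04}: Langer's bound involves $L_{\max}(\Omega_X^1)$ precisely because his induction passes through higher Frobenius pullbacks of $\Omega_X^1$, and your reserve plan of running his proof ``line by line'' with the nef polarization would therefore again only produce $\tfrac1p[L_{\max}(\Omega_X^1)]_{+}$. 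The missing idea, which is how the paper proves (iii), is a one-step canonical-connection argument exploiting the rank-one quotient: assuming $E$ is not strongly semistable, take the minimal $k\ge 1$ with $\mu_{\max}(F^{k*}E)=p^kL_{\max}(E)$ (this exists by fdHN); the maximal destabilizing subsheaf $E_1^k\subset F^{k*}E$ then does not descend to $F^{(k-1)*}E$, so Cartier's theorem yields a nonzero $\mathcal{O}_X$-linear map $E_1^k\to (F^{k*}E/E_1^k)\otimes\Omega_X^1$. Since $F^{k*}E/E_1^k$ has rank one, $\mu_{\max}\bigl((F^{k*}E/E_1^k)\otimes\Omega_X^1\bigr)=\mu(F^{k*}E/E_1^k)+\mu_{\max}(\Omega_X^1)=p^kL_{\min}(E)+\mu_{\max}(\Omega_X^1)$ --- this is exactly where $\mu_{\max}(\Omega_X^1)$ enters with no further Frobenius pullback of $\Omega_X^1$ --- and comparing with $\mu(E_1^k)=p^kL_{\max}(E)$ and dividing by $p^k\ge p$ gives (iii). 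Without supplying this step your argument does not establish the stated bound.
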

\begin{proof}
The assertions (i) and (ii) have been proved in \cite[Corollary 6.2]{Lan04} when the $D_i$'s are ample, and in general we can prove them by perturbing the $D_i$'s to be ample divisors and taking the limit.
To prove (iii), we may assume $E$ is not strongly semistable, since $E$ satisfies fDHN, there exists a minimal $k\geq 1$ such that $\mu_{\max}(F^{k*}E) = p^kL_{\max}(E)$, thus if  $E_1^k$ is the maximal destablizing subsheaf of $F^{k*}E$ then $E_1^k$ cannot descend to a subsheaf of $F^{(k-1)*}E$.
As a consequence, the canonical connection induces a non-trivial $\mathcal{O}_X$-linear homomorphism $E_1^k \to (F^{k*}E/E_1^k) \otimes \Omega_X^1$. Since rank~$F^{k*}E/E_1^k = 1$, we can show the desired inequality by
$$p^kL_{\mathrm{max}}(E) = \mu(E_1^k) \leq \mu_{\max}((F^{k*}E/E_1^k )\otimes \Omega_X^1) = p^kL_{\mathrm{min}}(E) + \mu_{\mathrm{max}}(\Omega_X^1).$$
\end{proof}


\subsection{Slopes of $1$-foliations}\label{sec:1-fol}
Let $X$ be a smooth variety over an algebraically closed field $k$ with $\mathrm{char}~ k=p >0$. Recall that a \emph{$1$-foliation} is a saturated subsheaf $\mathcal{F} \subset T_X$ which is involutive (i.e., $[\mathcal{F}, \mathcal{F}] \subset \mathcal{F}$) and $p$-closed (i.e., $\xi^p \in \mathcal{F}, \forall \xi \in \mathcal{F}$).
A $1$-foliation $\mathcal{F}$ induces a finite purely inseparable morphism (cf. \cite{Ek87})
$$\pi: X \to Y = X/\mathcal{F} = \mathrm{Spec} (Ann(\mathcal{F}) := \{a \in \O_X| \xi(a) = 0, \forall \xi \in \mathcal{F}\}),$$
and if $\mathcal{F}$ is a locally free subsheaf of $T_X$, then $Y$ is smooth and
$$K_{X} \sim \pi^*K_{Y} + (p-1)\det \mathcal{F}|_{X}.$$
For a subsheaf of $T_X$ be to a $1$-foliation, we have the following criterion.
\begin{lem}\textup{(\cite[Lemma 1.5]{Lan15})}\label{lem:c-fol}
Let $\mathcal{F}$ be a saturated $\O_X$-submodule of $T_X$. If
$$\mathrm{Hom}_X(\wedge^2\mathcal{F}, T_X/\mathcal{F}) = \mathrm{Hom}_X(F_X^*\mathcal{F}, T_X /\mathcal{F}) = 0$$
then $\mathcal{F}$ is a 1-foliation.
\end{lem}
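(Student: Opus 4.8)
The plan is to verify the two defining properties of a $1$-foliation — involutivity and $p$-closedness — separately, in each case exhibiting the obstruction as an element of one of the two $\Hom$-groups assumed to vanish.

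For involutivity, the first step is to observe that, although the Lie bracket on $T_X$ is not $\O_X$-bilinear, the pairing $(\xi,\eta)\mapsto[\xi,\eta]\bmod\mathcal{F}$ on local sections of $\mathcal{F}$ \emph{is} $\O_X$-bilinear. Indeed, the Leibniz rule gives $[\xi,a\eta]=a[\xi,\eta]+\xi(a)\eta$, and the correction term $\xi(a)\eta$ lies in $\mathcal{F}$ because $\eta$ is a section of $\mathcal{F}$; the same holds in the first variable, and the pairing is plainly alternating. Hence it factors through an $\O_X$-linear map $\wedge^2\mathcal{F}\to T_X/\mathcal{F}$. Since $\Hom_X(\wedge^2\mathcal{F},T_X/\mathcal{F})=0$ by hypothesis, this map is zero, which says exactly that $[\mathcal{F},\mathcal{F}]\subseteq\mathcal{F}$.

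For $p$-closedness I would then use the previous step together with Jacobson's classical formulas for $p$-th powers of derivations. Consider $\varphi\colon\xi\mapsto\xi^{p}\bmod\mathcal{F}$. It is additive: the identity $(\xi+\eta)^p=\xi^p+\eta^p+\sum_{i=1}^{p-1}s_i(\xi,\eta)$ expresses each $s_i(\xi,\eta)$ as an integer combination of iterated Lie brackets of $\xi$ and $\eta$, all of which lie in $\mathcal{F}$ by the involutivity just established. It is $p$-semilinear: the identity $(a\xi)^p=a^p\xi^p+\big((a\xi)^{p-1}(a)\big)\xi$ gives $\varphi(a\xi)=a^p\varphi(\xi)$, since $\big((a\xi)^{p-1}(a)\big)\xi$ is a section of $\mathcal{F}$. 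An additive, $p$-semilinear map $\mathcal{F}\to T_X/\mathcal{F}$ is the same datum as an $\O_X$-linear map $F_X^*\mathcal{F}\to T_X/\mathcal{F}$, by the adjunction $F_X^*\dashv F_{X*}$ for the absolute Frobenius; hence $\varphi$ is killed by the vanishing of $\Hom_X(F_X^*\mathcal{F},T_X/\mathcal{F})$, i.e. $\xi^p\in\mathcal{F}$ for every local section $\xi$ of $\mathcal{F}$. Together with involutivity, this shows $\mathcal{F}$ is a $1$-foliation.

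The argument is essentially formal, and I expect no serious obstacle; the only points demanding care are the bookkeeping in the two $p$-th-power identities — in particular the fact that $p$-closedness genuinely uses involutivity, since the additivity of $\varphi$ relies on the iterated brackets lying in $\mathcal{F}$ — and the translation between $p$-semilinear maps out of $\mathcal{F}$ and $\O_X$-linear maps out of $F_X^*\mathcal{F}$, which is precisely what makes the second hypothesis the natural one. If one prefers to sidestep any worry about $\wedge^2\mathcal{F}$ when $\mathcal{F}$ is not locally free, one may carry out the whole computation over the dense open locus where $\mathcal{F}\subseteq T_X$ is a subbundle and then extend the resulting inclusions of sheaves using that $\mathcal{F}$ is saturated, though this refinement is not really needed.
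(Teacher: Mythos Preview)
The paper does not give its own proof of this lemma; it is simply quoted from \cite[Lemma 1.5]{Lan15} and then applied. Your argument is correct and is precisely the standard proof one finds in Langer's paper: the obstruction to involutivity is the $\O_X$-linear map $\wedge^2\mathcal{F}\to T_X/\mathcal{F}$ induced by the Lie bracket, and the obstruction to $p$-closedness is the $p$-linear map $\mathcal{F}\to T_X/\mathcal{F}$, $\xi\mapsto\xi^p\bmod\mathcal{F}$, whose additivity uses Jacobson's formula together with the already-established involutivity, and whose $p$-semilinearity is exactly the statement that it corresponds to an element of $\Hom_X(F_X^*\mathcal{F},T_X/\mathcal{F})$.
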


\begin{thm}\textup{(\cite[Theorem 2.1]{Lan15})}\label{rat-curve}
Let $L$ be a nef $\mathbb{R}$-divisor on a normal projective variety $X$. Let $f : C \to X$ be a non-constant morphism
from a smooth projective curve $C$ such that $X$ is smooth along $f(C)$. Let $\mathcal{F} \subseteq T_X$ be a
$1$-foliation, smooth along $f(C)$. Assume that
$$c_1(\mathcal{F})\cdot C > \frac{K_X\cdot C}{p-1}.$$
Then for every $x \in f(C)$ there is a rational curve $B_x \subseteq X$ passing through $x$ such that
$$L \cdot B_x \leq 2\dim X \frac{pL \cdot C}{(p-1)c_1(\mathcal{F})\cdot C-K_X\cdot C}.$$
\end{thm}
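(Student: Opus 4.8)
The plan is to pass to the quotient of $X$ by the $1$-foliation $\mathcal{F}$ and reduce to Mori's bend-and-break on that quotient. First I would invoke the construction of Section \ref{sec:1-fol}: the $1$-foliation $\mathcal{F}$ yields a finite purely inseparable morphism $\pi\colon X\to Y:=X/\mathcal{F}$ with $Y$ a normal projective variety, smooth along $\pi(f(C))$ (because $\mathcal{F}$ is a subbundle of $T_X$ there), and $K_X\sim\pi^{*}K_Y+(p-1)c_1(\mathcal{F})$ in a neighbourhood of $\pi(f(C))$. Since $\mathcal{O}_X^{\,p}\subseteq\mathcal{O}_Y=\mathrm{Ann}(\mathcal{F})$, the morphism $\pi$ is dominated by the $k$-linear Frobenius, i.e.\ $F_{X/k}\colon X\to X^{(1)}$ factors as $F_{X/k}=\rho\circ\pi$ for a finite morphism $\rho\colon Y\to X^{(1)}$. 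I would then set $M:=\rho^{*}L^{(1)}$, a nef $\mathbb{R}$-Cartier divisor on $Y$ satisfying $\pi^{*}M=F_{X/k}^{*}L^{(1)}\equiv pL$, and put $g:=\pi\circ f\colon C\to Y$, a non-constant morphism with $g(C)$ in the smooth locus of $Y$. The projection formula applied to $\pi$ gives
\begin{align*}
-\,K_Y\cdot g_{*}[C] &= -\,\pi^{*}K_Y\cdot f_{*}[C] = (p-1)c_1(\mathcal{F})\cdot C-K_X\cdot C > 0,\\
M\cdot g_{*}[C] &= \pi^{*}M\cdot f_{*}[C] = p\,(L\cdot C),
\end{align*}
the positivity in the first line being exactly the hypothesis $c_1(\mathcal{F})\cdot C>\tfrac{K_X\cdot C}{p-1}$.

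Next, fix $x\in f(C)$ and set $y:=\pi(x)\in g(C)$. Applying Mori's classical bend-and-break to $g$ — after composing $g$ with a large power of the Frobenius of $C$, which multiplies all the intersection numbers above by a common power of $p$ and hence does not affect the relevant ratio — I would produce a rational curve $B_Y\subseteq Y$ through $y$ with
$$M\cdot B_Y \;\le\; 2\dim X\cdot\frac{M\cdot g_{*}[C]}{-\,K_Y\cdot g_{*}[C]} \;=\; 2\dim X\cdot\frac{p\,(L\cdot C)}{(p-1)c_1(\mathcal{F})\cdot C-K_X\cdot C}.$$
The point requiring care here is that $Y$ is only known to be smooth along $g(C)$ and may be singular elsewhere; this is harmless since bend-and-break is carried out in a neighbourhood of the curve being deformed, but it is the step I would write out most carefully (together with checking that $Y$ is projective, which follows from $\rho^{*}(\text{ample})$ being ample).

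Finally I would lift $B_Y$ back to $X$. As $\pi$ is a homeomorphism, $B_x:=(\pi^{-1}B_Y)_{\mathrm{red}}$ is an integral curve through $x=\pi^{-1}(y)$, and $B_x\to B_Y$ is a finite purely inseparable cover of a rational curve, so $B_x$ is rational. Let $p^{s}$ be its degree. Restricting $F_{X/k}=\rho\circ\pi$ to $B_x$ realizes the degree-$p$ relative Frobenius of $B_x$ as $B_x\xrightarrow{\pi}B_Y\xrightarrow{\rho}\rho(B_Y)$, forcing $s\in\{0,1\}$ and $\deg\!\big(B_Y\to\rho(B_Y)\big)=p^{\,1-s}$; hence $\rho_{*}[B_Y]=p^{\,1-s}[\rho(B_Y)]$ and, since intersection numbers are preserved under the relative Frobenius twist,
$$M\cdot B_Y=\rho^{*}L^{(1)}\cdot B_Y=p^{\,1-s}\,\big(L^{(1)}\cdot\rho(B_Y)\big)=p^{\,1-s}\,(L\cdot B_x)\;\ge\;L\cdot B_x.$$
Combining this with the bound on $M\cdot B_Y$ yields the asserted inequality for $B_x$. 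The main obstacle in turning this into a complete proof is the bookkeeping of the degrees of the various purely inseparable morphisms that produces the factor $p$ in the numerator, together with making the application of bend-and-break on the partially singular quotient $Y$ rigorous.
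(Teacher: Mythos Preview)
The paper does not prove this statement; it is quoted from \cite[Theorem 2.1]{Lan15} without argument, so there is nothing to compare against here. Your sketch is essentially Langer's proof (which in turn refines Miyaoka's original idea): pass to the quotient $Y=X/\mathcal{F}$, use $K_X\sim\pi^*K_Y+(p-1)c_1(\mathcal{F})$ to translate the hypothesis into $K_Y\cdot g_*[C]<0$, apply the Miyaoka--Mori bend-and-break on $Y$, and pull the resulting rational curve back through the purely inseparable $\pi$. The degree bookkeeping via the factorisation $F_{X/k}=\rho\circ\pi$ and the observation that $F_{X/k}|_{B_x}$ is the degree-$p$ Frobenius of the curve $B_x$ is exactly how one pins down the factor $p$ in the numerator.

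Two small corrections to your write-up. First, you do not need to pre-compose $g$ with Frobenius yourself: the Frobenius trick is already internal to the proof of the Miyaoka--Mori theorem, and the statement you invoke (rational curve through every point of $g(C)$ with $M\cdot B_Y\le 2\dim Y\cdot\frac{M\cdot C}{-K_Y\cdot C}$) only requires $Y$ projective and smooth along $g(C)$; see for instance Koll\'ar, \emph{Rational Curves on Algebraic Varieties}, II.5.8 and II.5.14. Second, your remark that ``bend-and-break is carried out in a neighbourhood of the curve'' is not accurate---the deformed curves may well leave that neighbourhood and meet $\mathrm{Sing}(Y)$---but this does not matter: smoothness along $g(C)$ is needed only to bound from below the dimension of $\mathrm{Hom}(C,Y;g|_{\{\text{pt}\}})$, not to control where the broken curves end up.
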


\begin{cor}\label{cor:bound}
Let $X$ be a normal projective variety of dimension $d$ and let $A$ be an ample divisor on $X$. Let $\rho: Z \to X$ be a smooth resolution of singularities. Assume that $K_X$ is a nef $\mathbb{Q}$-Cartier divisor with Caritier index $n_0$ and numerical dimension $\nu(K_X) = l$. Set
$$D_1= \cdots D_{l} = \rho^*K_X,~D_{l+1} = \cdots D_{d-1} = \rho^*A.$$
Let $\mu$ be the $(D_1, \cdots, D_{d-1})$-slope.
Let $\mathcal{F} \subseteq T_Z$ be a saturated subsheaf.

(1) If $\mathcal{F}$ is a foliation with rank $r$ and $\mu(\mathcal{F}) > 0$
then either the nef dimension $n(K_X) \leq d-1$, or $K_X$ is big and then
$$r\mu(\mathcal{F}) = c_1(\mathcal{F})\cdot(\rho^*K_X)^{d-1}  \leq \frac{2pn_0d +1}{p-1}K_X^d.$$

(2) If $d=3$ and $\mu_{\mathrm{max}}(T_Z) = \mu(\mathcal{F}) >0$ then $\mathcal{F}$ is a foliation.
\end{cor}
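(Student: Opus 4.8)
The plan is to deduce both parts from Langer's rational-curve estimate (Theorem \ref{rat-curve}): part (1) directly, and part (2) via the foliation criterion (Lemma \ref{lem:c-fol}) together with the control on slopes under Frobenius (Lemma \ref{lem:slope}).

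\emph{Part (1).} Write $l=\nu(K_X)$ and $K_Z=\rho^*K_X+E$ with $E$ effective and $\rho$-exceptional. The idea is to apply Theorem \ref{rat-curve} to a moving complete-intersection curve degenerating to the numerical class $D_1\cdots D_{d-1}$: I would fix an ample $H$ on $Z$, set $P_t=\rho^*K_X+tH$ and $Q_t=\rho^*A+tH$ ($t>0$ small, both ample), and take $C_t$ a general complete-intersection curve obtained from $D_1\cdots D_{d-1}$ by replacing each $\rho^*K_X$ by $P_t$ and each $\rho^*A$ by $Q_t$, chosen through a prescribed general point of $Z$; $\mathcal F$ is smooth along $C_t$ since it is saturated, hence locally free in codimension one. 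Using $E\cdot\rho^*(-)^{d-1}=0$ and $K_X^{\,l+1}A^{\,d-1-l}=0$ for $l<d$, one gets $c_1(\mathcal F)\cdot C_t\to r\mu(\mathcal F)$, while $K_Z\cdot C_t$ and $\rho^*K_X\cdot C_t$ both tend to $K_X^d$ if $l=d$ and to $0$ if $l<d$. If $K_X$ is not big ($l<d$), then $(p-1)c_1(\mathcal F)\cdot C_t>K_Z\cdot C_t\ge0$ for $t\ll1$, and Theorem \ref{rat-curve} with $L=\rho^*K_X$ gives, through a general point, a rational curve $B$ with $\rho^*K_X\cdot B\le 2dp(\rho^*K_X\cdot C_t)/((p-1)c_1(\mathcal F)\cdot C_t-K_Z\cdot C_t)$, whose right side $\to0$; as $n_0K_X$ is Cartier this forces $\rho^*K_X\cdot B=0$, so $X$ carries a covering family of $K_X$-trivial rational curves and $n(K_X)\le d-1$. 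If $K_X$ is big ($l=d$): either $(p-1)r\mu(\mathcal F)\le K_X^d$, and the inequality is immediate since $2pn_0d+1\ge1$; or Theorem \ref{rat-curve} with $L=\rho^*K_X$ gives, through a general point, a rational curve $B$ with $\rho^*K_X\cdot B$ bounded by the same expression, whose limit is $2dpK_X^d/((p-1)r\mu(\mathcal F)-K_X^d)$. Since $B$ meets a general point it is not $\rho$-exceptional, and $K_X$ being big it cannot lie in a covering family of $K_X$-trivial curves, so $\rho^*K_X\cdot B\ge1/n_0$; combining and letting $t\to0$ yields $r\mu(\mathcal F)\le\frac{2pn_0d+1}{p-1}K_X^d$ (and $r\mu(\mathcal F)=c_1(\mathcal F)\cdot(\rho^*K_X)^{d-1}$ is the definition of $\mu$ in this case).

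\emph{Part (2).} Now $d=3$. Maximality $\mu(\mathcal F)=\mu_{\max}(T_Z)>0$ forces $\mathcal F$ to be semistable, and a slope computation as above (again using $E\cdot\rho^*(-)^{d-1}=0$ and $\nu(K_X)=l$) gives $\mu(T_Z)=-\tfrac13K_Z\cdot D_1D_2\le0$, so $T_Z$ is not semistable and $r:=\rk\mathcal F\in\{1,2\}$. If $r=2$ then necessarily $\mathcal F=E_1$, the first Harder--Narasimhan step, and the HN filtration of $T_Z$ has exactly two pieces, so $T_Z/\mathcal F$ is the torsion-free rank-one quotient $F_2$ with $\mu(F_2)=\mu_{\min}(T_Z)<\mu(\mathcal F)$; if $r=1$ then $\mathcal F$ is a line bundle ($Z$ smooth) and $\mu_{\max}(T_Z/\mathcal F)\le\mu_{\max}(T_Z)=\mu(\mathcal F)$. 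By Lemma \ref{lem:c-fol} it suffices to prove $\Hom(\wedge^2\mathcal F,T_Z/\mathcal F)=0$ and $\Hom(F_Z^*\mathcal F,T_Z/\mathcal F)=0$. The first is immediate: $\wedge^2\mathcal F=0$ when $r=1$, and when $r=2$, $\wedge^2\mathcal F$ has slope $2\mu(\mathcal F)>\mu(\mathcal F)>\mu_{\max}(T_Z/\mathcal F)$. For the second — which is what makes $\mathcal F$ $p$-closed — it suffices to show $\mu_{\min}(F_Z^*\mathcal F)>\mu_{\max}(T_Z/\mathcal F)$. When $r=1$, $F_Z^*\mathcal F$ is a line bundle of slope $p\mu(\mathcal F)>\mu(\mathcal F)\ge\mu_{\max}(T_Z/\mathcal F)$. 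When $r=2$, $F_Z^*\mathcal F$ need not be semistable, but Lemma \ref{lem:slope}(iii) applied to the rank-two semistable $\mathcal F$, together with $\mu_{\max}(\Omega_Z^1)=-\mu_{\min}(T_Z)$, gives $\mu_{\min}(F_Z^*\mathcal F)\ge p\mu(\mathcal F)+\min\{\mu_{\min}(T_Z),0\}$; this exceeds $\mu_{\max}(T_Z/\mathcal F)$, since if $\mu_{\min}(T_Z)\ge0$ it is $\ge p\mu(\mathcal F)>\mu(\mathcal F)>\mu_{\max}(T_Z/\mathcal F)$, and if $\mu_{\min}(T_Z)<0$ it is $\ge p\mu(\mathcal F)+\mu_{\min}(T_Z)>\mu_{\min}(T_Z)=\mu_{\max}(T_Z/\mathcal F)$ because $\mu(\mathcal F)>0$. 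Hence both Hom-groups vanish and $\mathcal F$ is a $1$-foliation.

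The hardest point is the second Hom-vanishing in part (2): the Frobenius pull-back $F_Z^*\mathcal F$ is in general not semistable — indeed $\mathcal F$ is typically not strongly semistable, since $\mu_{\min}(T_Z)<0$ once $K_X$ is big — so $\mu_{\min}(F_Z^*\mathcal F)$ must be controlled quantitatively, which is exactly where the rank-two sharpening Lemma \ref{lem:slope}(iii) is needed, in tandem with the structural observation that a rank-two maximal destabilizer of a rank-three bundle leaves a rank-one (hence semistable) quotient, so $\mu_{\max}(T_Z/\mathcal F)=\mu_{\min}(T_Z)$ and only the crude estimate is required. A lesser technical nuisance in part (1) is making the degeneration of $C_t$ rigorous and invoking the nef reduction map in the non-big case, for which one may harmlessly first enlarge $k$ to an uncountable algebraically closed field.
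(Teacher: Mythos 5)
Your proposal is correct and follows essentially the same route as the paper: part (1) by applying Theorem \ref{rat-curve} to general complete-intersection curves approximating the polarization (your $t\to 0$ perturbation is the paper's $m\to\infty$ limit along $\rho^*|mK_X+A|$ in disguise), and part (2) by reducing to the two Hom-vanishings of Lemma \ref{lem:c-fol} and controlling $\mu_{\min}(F_Z^*\mathcal F)$ via the canonical connection. The only cosmetic difference is that in the rank-two case you invoke Lemma \ref{lem:slope}(iii) as a black box where the paper reruns the canonical-connection estimate directly on the HN filtration of $F_Z^*\mathcal F$; both yield the needed bound.
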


\begin{proof}
(1) We will mimic the proof of \cite[Lemma 2.10]{XZ19}.

First assume $K_X$ is big. We may assume $A$ is sufficiently ample such that, for any sufficiently divisible positive integer $m$ the divisor $mK_X + A$ is also very ample on $X$ (see \cite{Ke08}). Take a curve $C_m$ from the intersection of $d-1$ general divisors in $\rho^*|mK_X +A|$. We can assume $\rho_*C_m$ is contained in the smooth locus of $K_X$ and $\mathcal{F}$ is smooth along $C_m$.
Then
\begin{align*}
K_Z \cdot C_m =K_Z \cdot (m\rho^*K_X + \rho^* A)^{d-1} =K_X^d m^{d-1} + o(m^{d-1})
\end{align*}
and
\begin{align*}
c_1(\mathcal{F}) \cdot C_m = c_1(\mathcal{F}) \cdot (m\rho^*K_X + \rho^* A)^{d-1} =c_1(\mathcal{F})\cdot(\rho^*K_X)^{d-1} m^{d-1} + o(m^{d-1}).
\end{align*}
If $c_1(\mathcal{F}) \cdot C_m \leq \frac{K_Z \cdot C_m}{p-1}$ for sufficiently large $m$, then the desired inequality holds. Otherwise, we set $L = \rho^*n_0K_X$ which is big, then we apply Theorem \ref{rat-curve} and obtain that for general closed point $z \in C_m$, there exists a rational curve $B_z$ passing through $z$ such that
{\small \begin{align*}1\leq L \cdot B_z \leq \frac{2dpL \cdot C_m}{(p-1)c_1(\mathcal{F})\cdot C_m- K_Z\cdot C_m}
&= \frac{2dpn_0K_X^d m^{d-1} + o(m^{d-1})}{((p-1)c_1(\mathcal{F})\cdot(\rho^*K_X)^{d-1} - K_X^d) m^{d-1} + o(m^{d-1})}\\
&= \frac{2dpn_0K_X^d}{(p-1)c_1(\mathcal{F})\cdot(\rho^*K_X)^{d-1} - K_X^d} + o(1).\end{align*}}
Taking $m\gg0$ shows the desired inequality.

If $K_X$ is not big, by similar argument as above we can get a family of rational curves $B_z$ passing through a very general point $z$ of $Z$ such that $\rho^*K_X \cdot B_z=0$ (here we use the fact that $L \cdot B_z$ is a nonnegative integer), which means the nef dimension $n(K_X) \leq d-1$.
\smallskip

(2) By the assumptions $\mu(T_Z) \leq 0$ and $\mu(\mathcal{F}) >0$, we have $\mu(T_Z/\mathcal{F}) <0$. By Lemma \ref{lem:c-fol}, to show $\mathcal{F}$ is a $1$-foliation it is enough to verify
$$\mathrm{Hom}_Z(\wedge^2\mathcal{F}, T_Z/\mathcal{F}) = \mathrm{Hom}_Z(F_Z^*\mathcal{F}, T_Z /\mathcal{F}) = 0.$$

It is trivial when $\mathrm{rank}(\mathcal{F}) = 3$. And when $\mathrm{rank}(\mathcal{F}) = 1$, the above two vanishings follow from $\wedge^2\mathcal{F} = 0$ and $ \mu(F_Z^*\mathcal{F}) = p \mu(\mathcal{F}) >  \mu(\mathcal{F}) > \mu_{\max} (T_Z /\mathcal{F})$ respectively.

Now assume $\mathrm{rank}(\mathcal{F}) = 2$. Since $\mu(\mathcal{F}) >0$, we have
$\mu(\wedge^2\mathcal{F}) =2 \mu(\mathcal{F}) > \mu(T_Z/\mathcal{F})$,
which infers the first vanishing $\mathrm{Hom}_Y(\wedge^2\mathcal{F}, T_Z/\mathcal{F}) = 0$.
If $F_Z^*\mathcal{F}$ is semistable, then the second vanishing holds. So we may assume $F_Z^*\mathcal{F}$ is not semistable, then the HN-filtration induces an exact sequence
\begin{equation}\label{ex-seq-1}
0\to \mathcal{F}_1 \to F_Z^*\mathcal{F} \to \mathcal{F}_2 \to 0.
\end{equation}
The canonical connection $\nabla_{\mathrm{can}}$ induces a non-trivial $\mathcal{O}_Z$-linear map $\mathcal{F}_1 \to \mathcal{F}_2 \otimes \Omega_Z^1$, which implies that
\begin{equation}\label{eq:5.1}
\mu(\mathcal{F}_1) - \mu(\mathcal{F}_2) \leq \mu_{\max}(\Omega_Z^1) = -\mu(T_Z /\mathcal{F}).
\end{equation}
By the exact sequence (\ref{ex-seq-1}), we have $\mu(\mathcal{F}_1) + \mu(\mathcal{F}_2) = 2p \mu(\mathcal{F})$. This equality minus the inequality (\ref{eq:5.1}) yields that
$$2\mu(\mathcal{F}_2) \geq 2p \mu(\mathcal{F}) + \mu(T_Z/\mathcal{F}) > 2\mu(T_Z/\mathcal{F})$$
which implies $\mathrm{Hom}_Z(\mathcal{F}_1, T_Z /\mathcal{F}) = \mathrm{Hom}_Z(\mathcal{F}_2, T_Z /\mathcal{F}) = 0$. Then we apply $\mathrm{Hom}_Z(-, T_Z /\mathcal{F})$ to the exact sequence (\ref{ex-seq-1}) and obtain the other vanishing
$\mathrm{Hom}_Z(F_Z^*\mathcal{F}, T_Z /\mathcal{F}) = 0$.
\end{proof}

\subsection{Proof of Theorem \ref{thm:my-ineq}} Before the proof let us recall the following results to estimate the discriminant $\Delta(E) = 2r c_2(E) - (r-1)c_1(E)^2$ where $r = \mathrm{rank}(E)$.
\begin{lem}\label{lem:bgmlv}
Let $X$ be a smooth projective variety of dimension $n$, $E$ a torsion free coherent sheaf of rank $r$ and $D_1, \cdots, D_{n-1}$ nef $\mathbb{R}$-Cartier divisors on $X$. Assume that $D_1\cdot D_2\cdots D_{n-1}$ is nontrivial and $F^{l*}E$ has a filtration
$$0 = E_0 \subset E_1 \subset E_2 \subset \cdots \subset E_m = F^{l*}E$$
with $F_i=E_i/E_{i-1}$ being torsion free. Let $r_i = \mathrm{rank}(F_i)$. Then

(1) $\Delta(E)\cdot D_2\cdots D_{d-1} =  \frac{1}{p^{2l}}D_2\cdots D_{d-1}\cdot(\sum\frac{r}{r_i}\Delta(F_i) -\sum r_ir_j(\frac{c_1(F_i)}{r_i}-\frac{c_1(F_j)}{r_j})^2)$;

(2) if $\Delta(F_i)\cdot D_2\cdots D_{d-1} \geq 0$ for $i=1,\cdots,m$, then
$$(D_1^2 \cdot D_2\cdots D_{d-1}) \cdot (\Delta(E)\cdot D_2\cdots D_{d-1}) \geq -r^2(\frac{\max_i\{\mu(F_i)\}}{p^l}- \mu(E))(\mu(E) - \frac{\min_i\{\mu(F_i)\}}{p^l});$$

(3) $(D_1^2 \cdot D_2\cdots D_{d-1}) \cdot (\Delta(E)\cdot D_2\cdots D_{d-1})\geq -r^2(L_{\mathrm{max}}(E)- \mu(E))(\mu(E) - L_{\mathrm{min}}(E))$.
\end{lem}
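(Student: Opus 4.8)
The plan is to prove the three parts in the stated order, with (1) a formal Chern-class computation that feeds into (2), and (2) feeding into (3). Throughout, since the $D_i$ are only assumed nef whereas the Bogomolov inequality and the Hodge index theorem are cleanest for ample classes, I would first establish everything for the perturbed polarizations $(D_1+\epsilon H,\dots,D_{d-1}+\epsilon H)$ with $H$ ample and $\epsilon\in\mathbb{Q}^+$, and then let $\epsilon\to 0$; this is legitimate because all the relevant intersection numbers are polynomial in $\epsilon$ and, as recorded before the lemma, $L_{\mathrm{max}}$ and $L_{\mathrm{min}}$ vary continuously with $\epsilon$.

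For (1): since the absolute Frobenius multiplies the class of a divisor by $p$, one has $c_i(F^{l*}E) = p^{li}c_i(E)$ for each sheaf involved, hence $\Delta(F^{l*}E) = p^{2l}\Delta(E)$. Writing $\Delta(G) = c_1(G)^2 - 2\,\rk(G)\,\mathrm{ch}_2(G)$ and using the additivity of $c_1$ and of $\mathrm{ch}_2$ along the filtration of $F^{l*}E$, one expands $\Delta(F^{l*}E)$ into the terms $c_1(F_i)^2$, $\mathrm{ch}_2(F_i)$ and the cross-products $c_1(F_i)\cdot c_1(F_j)$; substituting $\mathrm{ch}_2(F_i) = \tfrac{1}{2r_i}(c_1(F_i)^2 - \Delta(F_i))$ and completing squares rewrites the $c_1$-contribution as $-\sum_{i<j}r_ir_j\big(\tfrac{c_1(F_i)}{r_i} - \tfrac{c_1(F_j)}{r_j}\big)^2$, which after intersecting with $D_2\cdots D_{d-1}$ gives the asserted identity. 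This is routine bookkeeping; the one thing to watch is the uniform power-of-$p$ rescaling.

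For (2): from the identity in (1), multiply through by $D_1^2\cdot D_2\cdots D_{d-1}$, which is nonnegative since all $D_i$ are nef. The terms $\tfrac{r}{r_i}\,\Delta(F_i)\cdot D_2\cdots D_{d-1}$ are $\ge 0$ by hypothesis and may be discarded. Setting $\delta_{ij} := \tfrac{c_1(F_i)}{r_i} - \tfrac{c_1(F_j)}{r_j}$, I would invoke the mixed Hodge index inequality $(D_1^2\cdot D_2\cdots D_{d-1})(\delta_{ij}^2\cdot D_2\cdots D_{d-1}) \le (D_1\cdot\delta_{ij}\cdot D_2\cdots D_{d-1})^2$, valid because the product $D_1\cdot D_2\cdots D_{d-1}$ is nontrivial: one reduces to the classical Hodge index theorem on a surface by cutting down by general members of the ample perturbations and passing to the limit. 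Since $D_1\cdot\delta_{ij}\cdot D_2\cdots D_{d-1} = \mu(F_i)-\mu(F_j)$, this yields $p^{2l}(D_1^2\cdot D_2\cdots D_{d-1})(\Delta(E)\cdot D_2\cdots D_{d-1}) \ge -\sum_{i<j}r_ir_j(\mu(F_i)-\mu(F_j))^2$. Finally, additivity of $c_1$ gives $\sum_i r_i\mu(F_i) = c_1(F^{l*}E)\cdot D_1\cdots D_{d-1} = p^l r\,\mu(E)$, so the weighted mean of the $\mu(F_i)$ is $p^l\mu(E)$, and the elementary inequality $\sum_{i<j}r_ir_j(a_i-a_j)^2 = r\sum_i r_ia_i^2 - (\sum_i r_ia_i)^2 \le r^2(M-\bar a)(\bar a - m)$ — which is just $\sum_i r_i(a_i-M)(a_i-m)\le 0$ — applied with $a_i=\mu(F_i)$, $M=\max_i\mu(F_i)$, $m=\min_i\mu(F_i)$, $\bar a=p^l\mu(E)$ gives the claim after dividing by $p^{2l}$.

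For (3): I would take the filtration to be the Harder--Narasimhan filtration of $F^{l*}E$ for $l$ large. By the fdHN property there is $l_0$ such that for $l\ge l_0$ this filtration is the Frobenius pullback of that of $F^{l_0*}E$; in particular its graded pieces $F_i$ are strongly semistable and torsion free, so $\Delta(F_i)\cdot D_2\cdots D_{d-1}\ge 0$ by Langer's Bogomolov inequality (\cite{Lan04}), and (2) applies. For such $l$ the extreme slopes are $\tfrac{\max_i\mu(F_i)}{p^l} = \tfrac{\mu_{\max}(F^{l*}E)}{p^l} = L_{\mathrm{max}}(E)$ and $\tfrac{\min_i\mu(F_i)}{p^l} = \tfrac{\mu_{\min}(F^{l*}E)}{p^l}= L_{\mathrm{min}}(E)$ (the ratios are eventually constant), so substituting into the estimate of (2) gives (3). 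The main obstacle here is minor and entirely technical: this is a Bogomolov-type computation of standard shape, and the only points needing genuine care are the reduction of the mixed Hodge index inequality to surfaces for merely nef $D_i$, and keeping the tally of Chern classes and powers of $p$ straight in part (1).
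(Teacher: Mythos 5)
Your proof is correct and is essentially the argument of Langer that the paper itself does not reproduce but simply cites ([Lan04, p.~263]): the additivity of $c_1$ and $\mathrm{ch}_2$ along the filtration plus the discriminant identity for (1), the Hodge-index-type inequality and the elementary weighted estimate $\sum_{i<j}r_ir_j(a_i-a_j)^2\le r^2(M-\bar a)(\bar a-m)$ for (2), and the fdHN property together with Bogomolov's inequality for strongly semistable sheaves for (3). The bookkeeping (powers of $p$, the sum over $i<j$, and the reduction of the Hodge index inequality to the ample case by perturbation) all checks out.
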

\begin{proof}
We refer the reader to \cite[p. 263]{Lan04} for the proof.
\end{proof}

\begin{proof}[Proof of Theorem \ref{thm:my-ineq}] Let $\mu$ denote the slope with respect to $D_1=D_2=\rho^*K_X$. Then
$$\mu(T_Z) = -\mu(\Omega_Z^1) = \frac{K_X^3}{3}.$$
We separate two cases according to the stability of the tangent bundle $T_Z$.

\smallskip

Case (i) $T_Z$ is semistable. By Lemma \ref{lem:slope} we have
$$L_{\mathrm{max}}(\Omega^1_Z) - L_{\mathrm{min}}(\Omega^1_Z) \leq \frac{2}{p}L_{\mathrm{max}}(\Omega^1_Z)$$
thus $L_{\mathrm{min}}(\Omega^1_Z) \geq (1-\frac{2}{p})L_{\mathrm{max}}(\Omega^1_Z) \geq 0$. Then we can show that $$L_{\mathrm{max}}(\Omega^1_Z) \leq 3\mu(\Omega^1_Z) = K_X^3.$$
Applying Lemma \ref{lem:bgmlv} (3), we obtain
$$6c_2(Z) \cdot \rho^*K_X - 2K_X^3 \geq -9\cdot\frac{2}{3}\frac{1}{3}K_X^3= -2K_X^3$$
which is equivalent to that $c_2(Z) \cdot \rho^*K_X \geq 0$.
\smallskip

Case (ii) $T_Z$ is not semistable. Consider the the HN-filtration
$$0 = \mathcal{F}_0 \subsetneqq \mathcal{F}_1 \subsetneqq \mathcal{F}_2 \subsetneqq \mathcal{F}_m = T_Z$$
where $2\leq m \leq 3$, and set $F_i = \mathcal{F}_i/\mathcal{F}_{i-1}$ and $r_i =\mathrm{rank}(F_i)$.

Case (ii-1) $m=3$. Then for each $i=1,2,3$, we have $\mathrm{rank}(F_i) =1$, and $\mu(F_1) + \mu(F_2) + \mu(F_3) = -K_X^3$. Applying  Corollary \ref{cor:bound}, we always have $\mu(F_1) \leq \frac{6n_0p+1}{p-1}K_X^3$.

If $\mu(F_2) \geq 0$, then $\mu(F_3) = -K_X^3 - (\mu(F_1) + \mu(F_2)) <0$ and $\mu(\wedge^2\mathcal{F}_2) = \mu(F_1) + \mu(F_2) >0$. It follows that
$$\mathrm{Hom}_Z(\wedge^2\mathcal{F}_2, T_Z /\mathcal{F}_2)= \mathrm{Hom}_Z(F^*F_1, T_Z /\mathcal{F}_2) = \mathrm{Hom}_Z(F^*F_2, T_Z /\mathcal{F}_2) = 0.$$
Then we can apply $\mathrm{Hom}_Z(-, T_Z /\mathcal{F}_2)$ to the exact sequence
$$0\to F^*\mathcal{F}_1 \to F^*\mathcal{F}_2 \to F^*F_2 \to 0$$
and obtain $\mathrm{Hom}_Z(F^*\mathcal{F}_2, T_Z /\mathcal{F}_2) = 0$. By Lemma \ref{lem:c-fol}, we see that $\mathcal{F}_2$ is also a foliation, thus $2\mu(\mathcal{F}_2) \leq \frac{6n_0p+1}{p-1}K_X^3$ by Corollary \ref{cor:bound}. Then it follows that
$$\mu(F_3) = -K_X^3 - 2\mu(\mathcal{F}_2) \geq -K_X^3 -\frac{6n_0p+1}{p-1}K_X^3= -\frac{(6n_0+1)p}{p-1}K_X^3.$$
Applying Lemma \ref{lem:bgmlv} (2) we get
{\small \begin{equation}\label{ineq:my-pre}
\begin{split}
6c_2(T_Z)\cdot \rho^*K_X - 2K_X^3 &\geq -9(\mu(F_1)+ \frac{K_X^3}{3})(- \frac{K_X^3}{3} - \mu(F_3))/K_X^3 \\
&\geq -9(\frac{6n_0p+1}{p-1}K_X^3 + \frac{K_X^3}{3})(- \frac{K_X^3}{3}+ \frac{(6n_0+1)p}{p-1}K_X^3)/K_X^3 \\
&= -\frac{(18^2n_0^2 + 54n_0 +2)p^2 + (54n_0 +5)p + 2}{(p-1)^2}K_X^3.
\end{split}
\end{equation}}
This is equivalent to the desired inequality
\begin{equation}\label{ineq:my}c_2(T_Z)\cdot \rho^*K_X + \frac{(54n_0^2 + 9n_0)p^2 + (9n_0 +\frac{3}{2})p}{(p-1)^2}K_X^3 \geq 0.\end{equation}
It is worth mentioning that the equality in (\ref{ineq:my}) is attained only when
$$\mu(F_1) = \frac{6n_0p+1}{p-1}K_X^3, \mu(F_2)=0 ~\mathrm{and}~\mu(F_3)=-\frac{(6n_0+1)p}{p-1}K_X^3.$$

If $\mu(F_2) <0$, then
$$\mu(F_3) = -K_X^3 - 2\mu(\mathcal{F}_2) > -K_X^3 - \mu(F_1) \geq (-K_X^3 -\frac{6n_0p+1}{p-1}K_X^3)= -\frac{(6n_0+1)p}{p-1}K_X^3,$$
and it is easy to verify the strict inequality in (\ref{ineq:my}) by the computation (\ref{ineq:my-pre}).

Case (ii-2) $m=2$. In this case $F_1 \cong \mathcal{F}_1$. We claim that
\begin{equation}\label{ineq:F1}r_1\mu(F_1) \leq \frac{6n_0p+1}{p-1}K_X^3.\end{equation} Indeed, if $\mu(F_1) \leq 0$ then this inequality automatically holds; otherwise, we can apply Corollary \ref{cor:bound}.
It follows from this claim that
\begin{equation}\label{ineq:F2}0 > \mu(T_Z) > \mu(F_2) =\frac{ -K_X^3 - r_1\mu(F_1)}{r_2} \geq  -\frac{(6n_0+1)p}{r_2(p-1)K_X^3}.\end{equation}

If both $F_1$ and $F_2$ are strongly semistable, then $\Delta(F_i) \cdot \rho^*K_X \geq 0$ holds for each $i=1,2$ (\cite[Thm. 0.1]{Lan04}), we can apply Lemma \ref{lem:bgmlv} (2) to verify the inequality (\ref{ineq:my}) by the computation (\ref{ineq:my-pre}).

Assume $F_i$ (one of $F_1, F_2$) is not strongly semistable. Then $\mathrm{rank} (F_i) =2$ and by Lemma \ref{lem:slope}
\begin{equation}\label{ineq:gap}
L_{\mathrm{max}}(F_i) - L_{\mathrm{min}}(F_i) \leq \frac{1}{p}[\mu_{\max}(\Omega^1_Z)]_{+}=- \frac{1}{p}\mu(F_2).
\end{equation}

If it is $F_1$ that is not strongly semistable, then $r_1=2,r_2=1$ and
\begin{equation}\label{eq:F_1}L_{\mathrm{max}}(F_1) + L_{\mathrm{min}}(F_1) = 2\mu(F_1)=  -K_X^3 - \mu(F_2).\end{equation}
The equation (\ref{ineq:gap}) plus (\ref{eq:F_1}) yields
$$L_{\mathrm{max}}(F_1) \leq  -K_X^3 + (1+\frac{1}{p})(-\mu(F_2)) < \frac{6n_0p+1}{p-1}K_X^3$$
where the ``$<$'' is due to (\ref{ineq:F2}). And similarly, the equation (\ref{eq:F_1}) minus (\ref{ineq:gap})
yields
$$L_{\mathrm{min}}(F_1) \geq  -K_X^3 - (1-\frac{1}{p})(-\mu(F_2)) \geq - 6n_0K_X^3.$$
Then considering the HN-filtration of $F^{l*}F_1$ for some sufficiently large $l$, which gives a refinement of the filtration
$F^{l*}F_1 \subseteq F^{l*}T_Z$, and applying Lemma \ref{lem:bgmlv} (2) we can show
\begin{align*}
6c_2(T_Z)\cdot \rho^*K_X - 2K_X^3 &\geq -9(L_{\mathrm{max}}(F_1) + \frac{K_X^3}{3})(- \frac{K_X^3}{3}- \min\{L_{\mathrm{max}}(F_1), \mu(F_2)\})/K_X^3 \\
&\geq -9(\frac{6n_0p+1}{p-1}K_X^3 + \frac{K_X^3}{3})(- \frac{K_X^3}{3}  +\frac{(6n_0+1)p}{p-1}K_X^3)/K_X^3,
\end{align*}
thus the desired inequality (\ref{ineq:my}) follows.

If it is $F_2$ that is not strongly semistable, then
$$L_{\mathrm{max}}(F_2) + L_{\mathrm{min}}(F_2) = 2\mu(F_2) = -K_X^3- \mu(F_1).$$
This equality plus (\ref{ineq:gap}) yields that
$$L_{\mathrm{max}}(F_2) \leq \mu(F_2) +  \frac{1}{2p}(-\mu(F_2)) < 0,$$
and thus
$$L_{\mathrm{min}}(F_2) > 2\mu(F_2) \geq -\frac{(6n_0+1)p}{(p-1)}K_X^3.$$
Similarly as in the previous case, we can show
\begin{align*}
6c_2(T_Z)\cdot \rho^*K_X - 2K_X^3 &\geq -9(\max\{\mu(F_1),L_{\mathrm{max}}(F_2)\}  + \frac{K_X^3}{3})(- \frac{K_X^3}{3}- L_{\mathrm{min}}(F_2) )/K_X^3 \\
&> -9(\frac{6n_0p+1}{p-1}K_X^3 + \frac{K_X^3}{3})(- \frac{K_X^3}{3}  +\frac{(6n_0+1)p}{(p-1)}K_X^3)/K_X^3
\end{align*}
which infers the desired inequality (\ref{ineq:my}).
\end{proof}

\section{Effectivity of pluricanonical maps for 3-folds}\label{sec:dim3}
Let $X$ be a minimal terminal threefold over an algebraically closed field $k$ of characteristic $p>0$. We aim to find natural numbers $M_1, M_2$ such that $S^0_{-}(X, K_X + mK_X) \neq 0$ if $m\geq M_1$, and that $S^0_{-}(X, K_X + mK_X)$ is birational if $m\geq M_2$. We conclude Theorem \ref{thm:eff-3folds} by separating the following two cases.

\subsection{Regular case}
In this case since $h^1(\O_X) - h^2(\O_X) \leq q(X)=0$ (\cite[Remark 9.5.15, 9.5.25]{FGA05}), we have $$\chi(\O_X) = h^0(\O_X) - (h^1(\O_X) - h^2(\O_X)) - h^3(\O_X) \geq  h^0(\O_X)  - h^3(\O_X).$$ We also assume that $X$ is Gorenstein, in particular it has only rational singularities.

\begin{lem}\label{lem:van-h2} If $(n-1)(p-1)\geq 6$ then $h^2(X, nK_X) =0$. \end{lem}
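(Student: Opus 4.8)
The plan is to dualize, pass to a resolution, and then run a bend-and-break argument on the tangent bundle of the resolution.

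First I would use the Gorenstein hypothesis twice. Since $X$ is Gorenstein, hence Cohen--Macaulay, Serre duality gives $h^2(X,nK_X)=h^1(X,K_X-nK_X)=h^1(X,-(n-1)K_X)$; since a terminal Gorenstein threefold has rational singularities, a smooth resolution $\rho\colon Z\to X$ has $R^1\rho_*\mathcal O_Z=0$, so by the projection formula and the Leray spectral sequence $h^1(X,-(n-1)K_X)=h^1(Z,-(n-1)\rho^*K_X)$. This is precisely the place where the Gorenstein hypothesis enters (``the unhappy technical reason''): without it $nK_X$ need not be Cartier and this clean duality breaks down. Writing $D:=(n-1)\rho^*K_X$, a nef and big divisor on the smooth threefold $Z$, it then suffices to show $H^1(Z,\mathcal O_Z(-D))=0$, bearing in mind that $Z$ is not uniruled, being birational to a variety of general type.

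Suppose then $0\neq\xi\in H^1(Z,\mathcal O_Z(-D))=\mathrm{Ext}^1_{\mathcal O_Z}(\mathcal O_Z,\mathcal O_Z(-D))$. It defines a non-split extension $0\to\mathcal O_Z(-D)\to\mathcal E\to\mathcal O_Z\to 0$ of locally free sheaves with $c_1(\mathcal E)=-D$ and $c_2(\mathcal E)=0$, whence $\Delta(\mathcal E)\cdot\rho^*K_X=-c_1(\mathcal E)^2\cdot\rho^*K_X=-(n-1)^2K_X^3<0$. By the Bogomolov-type inequality in positive characteristic (Lemma \ref{lem:bgmlv} and \cite[Thm.~0.1]{Lan04}), perturbing $(\rho^*K_X,\rho^*K_X)$ by a small ample class as in the discussion preceding Lemma \ref{lem:slope}, $\mathcal E$ cannot be strongly semistable. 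Let $e_0\geq 0$ be minimal with $F^{e_0*}\mathcal E$ not semistable and $\mathcal G\subset F^{e_0*}\mathcal E$ its maximal destabilizing subsheaf. Since $F^{e_0*}\mathcal E$ sits in $0\to\mathcal O_Z(-p^{e_0}D)\to F^{e_0*}\mathcal E\to\mathcal O_Z\to 0$ and $\mathcal O_Z(-p^{e_0}D)$ has slope strictly below the average slope of $F^{e_0*}\mathcal E$, the sheaf $\mathcal G$ maps onto the quotient $\mathcal O_Z$, so $\mathcal G=\mathcal O_Z(-\Gamma)$ for an effective, nonzero $\Gamma$ (non-zero because $\mathcal E$ is non-split) with $\Gamma\cdot(\rho^*K_X)^2<\tfrac12(n-1)p^{e_0}K_X^3$. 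When $e_0\geq 1$ the intermediate Frobenius pullbacks are semistable, so $\mathcal G$ does not descend along $F$, and the canonical connection on $F^{e_0*}\mathcal E$ produces a nonzero $\mathcal O_Z$-linear map $\mathcal G\to(F^{e_0*}\mathcal E/\mathcal G)\otimes\Omega^1_Z$; the case $e_0=0$ (where this trick is unavailable) I would handle by a parallel, more elementary analysis of the destabilizing sequence of $\mathcal E$ itself.

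From such a map one extracts a saturated subsheaf $\mathcal F\subseteq T_Z$ of strictly positive slope with respect to $(\rho^*K_X)^2$, the positivity coming from the bound on $\Gamma$ together with $K_Z=\rho^*K_X+E$ and $E\cdot(\rho^*K_X)^2=0$; this is exactly where the hypothesis $(n-1)(p-1)\geq 6$ is needed. By Corollary \ref{cor:bound}(2), $\mathcal F$ is a $1$-foliation, and then Corollary \ref{cor:bound}(1) (equivalently Theorem \ref{rat-curve}), applied with $L=\rho^*n_0K_X$, yields through a very general point of $Z$ a rational curve moving in a covering family, i.e.\ $X$ is uniruled --- contradicting that $X$ is of general type. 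Hence $H^1(Z,\mathcal O_Z(-D))=0$ and $h^2(X,nK_X)=0$. I expect the main obstacle to be the slope bookkeeping in the last two steps: controlling $c_1(\mathcal F)\cdot(\rho^*K_X)^2$ tightly enough to get strict positivity precisely under $(n-1)(p-1)\geq 6$, and treating the small-$e_0$ cases (in particular $e_0=0$) on the same footing.
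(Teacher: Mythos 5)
Your route is genuinely different from the paper's, and it has gaps that I do not think can be closed as written. The paper does \emph{not} analyze the instability of the extension sheaf directly: it first proves $H^1(X,-mK_X)=0$ for all $m\gg 0$ (via Fujita vanishing, restricting to a general very ample surface $H$ in the smooth locus and invoking the surface-level vanishing \cite[Thm.~4.3]{XZ19}), deduces that the Serre-dual class in $H^1(X,(1-n)K_X)$ must lie in the kernel of some Frobenius pullback $F^*:H^1(X,p^e(1-n)K_X)\to H^1(X,p^{e+1}(1-n)K_X)$, and then cites \cite[Cor.~4.6]{XZ19} as a black box. That result exploits the \emph{split-after-Frobenius} situation: the splitting of $F^*\mathcal E$ together with the canonical connection produces a nonzero map $T_X\to\mathcal O_X(-p\cdot p^e(n-1)K_X)$, whose kernel has $c_1\cdot K_X^2\geq(p^{e+1}(n-1)-1)K_X^3$ — note the extra factor of $p$. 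The hypothesis $(n-1)(p-1)\geq 6$ is calibrated to exactly that bound.

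Concretely, three things go wrong in your version. First, the slope bookkeeping you defer does not close: from $\mu(\mathcal G)>\mu(F^{e_0*}\mathcal E)$ you only get $\Gamma\cdot(\rho^*K_X)^2<\tfrac12(n-1)p^{e_0}K_X^3$, and the induced map $T_Z\to\mathcal G^{-1}\otimes(F^{e_0*}\mathcal E/\mathcal G)$ then gives a kernel with $c_1(\mathcal F)\cdot(\rho^*K_X)^2\geq(p^{e_0}(n-1)-1)K_X^3-2\Gamma\cdot(\rho^*K_X)^2>-K_X^3$, which is not positive. To get positivity you need the much sharper statement $\Gamma\cdot(\rho^*K_X)^2=0$, which requires Lemma \ref{lem:bgmlv}(3) at the level where fdHN stabilizes; and even granting it, the resulting lower bound $(p^{k}(n-1)-1)K_X^3$ with $k\geq1$ is too weak to contradict the upper bound $\frac{6pn_0+1}{p-1}K_X^3$ of Corollary \ref{cor:bound}(1) for the minimal $n$ allowed by $(n-1)(p-1)\geq 6$ (e.g.\ $p=2$, $n=7$). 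Second, the case you set aside — where the canonical-connection trick is unavailable because the destabilizing subsheaf already achieves $L_{\max}$ on $\mathcal E$ itself, or where $F^{e_0*}\xi=0$ so that $\Gamma=0$ — is not a corner case: it is precisely the Frobenius-killed regime that carries all the content, and "a parallel, more elementary analysis" there only yields $\mathcal O_Z(-\Gamma)\hookrightarrow\mathcal E$ with $\Gamma\geq0$ and $\Gamma\cdot(\rho^*K_X)^2=0$, from which no contradiction follows. Third, your endgame "a covering family of rational curves, i.e.\ $X$ is uniruled, contradicting general type" is invalid in characteristic $p$: uniruled varieties of general type exist. The contradiction must be numerical — the curves produced by Theorem \ref{rat-curve} must satisfy $K_X\cdot B<1$, hence give a covering family of $K_X$-trivial curves against $K_X$ nef and big — which is exactly why the precise slope bounds you leave open are the whole proof.
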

\begin{proof}We argue by contradiction. Suppose that $h^2(X, nK_X) \neq 0$.
By Serre duality we have $H^1(X, (1-n)K_X) \neq 0$. Applying Fujita vanishing theorem, we can take a sufficiently ample divisor $H$ such that $$H^i(X, -H -mK_X) \cong H^{3-i}(X, H + (m+1)K_X)^{\vee} =0~\ \ \mathrm{for ~any}~i>0~\mathrm{and}~m\geq 0.$$
Since $X$ has at most isolated singularities, we may assume $H$ is a smooth hypersurface contained in the smooth locus of $X$.
Since $K_X|_H$ is nef and big, for any sufficiently large $m$ we have $h^1(\mathcal{O}_H(-mK_X|_H)) =0$ (see \cite[Thm. 4.3]{XZ19}).
By taking cohomology of the following exact sequence
$$0 \to \mathcal{O}_X(-mK_X -H) \to  \mathcal{O}_X(-mK_X) \to  \mathcal{O}_H(-mK_X|_H) \to 0$$
we get a long exact sequence and can find a number $m_0$ such that for any $m\geq m_0$, $H^1(X, -mK_X) =0$. As a consequence, there exists a natural number $e$ such that the pullback map of Frobenius map
$$F^*: H^1(X, p^e(1-n)K_X) \to H^1(X, p^{e+1}(1-n)K_X)$$
has nontrivial kernel. Applying \cite[Corollary 4.6]{XZ19}\footnote{The assertion is valid if the assumption that $X$ is smooth is replaced by that $K_X$ is Cartier.}, it must hold that $(p-1)p^e(n-1) - 1 \leq 4$. However, this contradicts to our assumption.
\end{proof}
Next we prove

\begin{lem}\label{lem:h0} Assume that $h^0(X, \omega_X) \leq 1$. Set $n_0(2)=13, n_0(3) =10, n_0(5) =9$ and $n_0(p) = 8$ if $p \geq 7$. Then $h^0(X, n_0(p) K_X) \geq 15$ for $p=2,3,5$, and $h^0(X, n_0(p) K_X) \geq 10$ for $p \geq 7$.
\end{lem}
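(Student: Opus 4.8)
The plan is to apply the Riemann--Roch theorem for a terminal (hence Gorenstein, with only rational singularities) threefold $X$ to the divisor $mK_X$, combined with the Miyaoka--Yau type inequality of Theorem \ref{thm:my-ineq} to control the term $K_Z\cdot\rho^*c_2(X)$, and the vanishing $h^2(X,mK_X)=0$ from Lemma \ref{lem:van-h2} together with the obvious vanishing $h^3(X,mK_X)=0$ for $m\geq 2$. Since $X$ is Gorenstein terminal, the singular Riemann--Roch formula gives
\begin{align*}
\chi(X,mK_X)=\frac{(2m-1)m(m-1)}{12}K_X^3-\frac{2m-1}{12}K_Z\cdot\rho^*c_2(X)+\chi(\mathcal{O}_X),
\end{align*}
so that $h^0(X,mK_X)\geq\chi(X,mK_X)$ once $h^2$ and $h^3$ vanish. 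First I would record the numerical inputs: $K_X^3\geq 1$ (as $K_X$ is an integral nef and big Cartier divisor, being Gorenstein), $\chi(\mathcal{O}_X)\geq h^0(\mathcal{O}_X)-h^3(\mathcal{O}_X)=1-h^0(X,\omega_X)\geq 0$ in the regular case under the hypothesis $h^0(X,\omega_X)\leq 1$, and the bound $K_Z\cdot\rho^*c_2(X)\geq -2AK_X^3$ from \eqref{eq:my-ineq} with $A=\frac{(54n_0^2+9n_0)p^2+(9n_0+\frac32)p}{(p-1)^2}$ and $n_0=1$ here (Gorenstein), i.e.\ $A=A(p)=\frac{63p^2+\frac{21}{2}p}{(p-1)^2}$.

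Next I would substitute these bounds into Riemann--Roch. For $m=n_0(p)$ with the prescribed values, I would check that $(n_0(p)-1)(p-1)\geq 6$ so that Lemma \ref{lem:van-h2} applies and $h^2(X,n_0(p)K_X)=0$; one verifies $(13-1)(2-1)=12\geq 6$, $(10-1)(3-1)=18\geq 6$, $(9-1)(5-1)=32\geq 6$, and $(8-1)(p-1)\geq 42\geq 6$ for $p\geq 7$. Then
\begin{align*}
h^0(X,n_0K_X)\geq\frac{(2n_0-1)n_0(n_0-1)}{12}K_X^3-\frac{2n_0-1}{12}K_Z\cdot\rho^*c_2(X)\geq\Bigl(\frac{(2n_0-1)n_0(n_0-1)}{12}+\frac{(2n_0-1)A(p)}{6}\Bigr)K_X^3,
\end{align*}
and since $K_X^3\geq 1$ it suffices to check, case by case for $p=2,3,5$ and then uniformly for $p\geq 7$, that the bracketed constant is at least $15$ (resp.\ at least $10$ for $p\geq 7$). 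For $p\geq 7$ the function $A(p)$ is decreasing in $p$, so it is enough to bound $A(7)=\frac{63\cdot 49+\frac{21}{2}\cdot 7}{36}$ from above and plug $n_0=8$ in; the leading term $\frac{(2\cdot 8-1)\cdot 8\cdot 7}{12}=70$ already exceeds $10$ comfortably, so even a crude lower bound suffices. For $p=2,3,5$ the dominant contribution is again the $\frac{(2n_0-1)n_0(n_0-1)}{12}K_X^3$ term, which for $n_0=13,10,9$ equals $\tfrac{25\cdot 13\cdot 12}{12}=325$, $\tfrac{19\cdot 10\cdot 9}{12}=142.5$, $\tfrac{17\cdot 9\cdot 8}{12}=102$, each far above $15$, so the stated bounds follow with room to spare; the only genuine arithmetic is to confirm there is no sign issue with the $c_2$ correction, which is guaranteed by Theorem \ref{thm:my-ineq}.

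The main obstacle I anticipate is bookkeeping rather than conceptual: one must be careful that the Riemann--Roch formula being used is valid for Gorenstein terminal threefolds with $K_X$ Cartier (so $K_Z\cdot\rho^*c_2(X)$ is the correct intersection number, with $\rho:Z\to X$ the smooth resolution fixed in Theorem \ref{thm:my-ineq}), and that the vanishing $h^2(X,n_0K_X)=0$ is legitimately available for the chosen $n_0(p)$ — this is precisely why the thresholds $13,10,9,8$ are chosen to satisfy $(n_0-1)(p-1)\geq 6$. A secondary subtlety is to make sure $\chi(\mathcal{O}_X)\geq 0$: in the regular case $q(X)=0$ forces $h^1(\mathcal{O}_X)-h^2(\mathcal{O}_X)\leq 0$, and $h^3(\mathcal{O}_X)=h^0(X,\omega_X)\leq 1$ by hypothesis, so $\chi(\mathcal{O}_X)\geq 1-1=0$, which is exactly enough; dropping the term $\chi(\mathcal{O}_X)$ is harmless. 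Once these points are in place the inequalities $h^0\geq 15$ (for $p=2,3,5$) and $h^0\geq 10$ (for $p\geq 7$) are immediate from the displayed lower bound and $K_X^3\geq 1$.
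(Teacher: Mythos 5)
Your overall strategy is exactly the paper's (Riemann--Roch on a resolution, the Miyaoka--Yau type inequality of Theorem \ref{thm:my-ineq} with $n_0=1$, $\chi(\mathcal{O}_X)\geq 0$, and Lemma \ref{lem:van-h2} to kill $h^2$), but the execution contains a sign error that invalidates the numerical verification. On a smooth threefold $Z$ one has
$\chi(\mathcal{O}_Z(D))=\tfrac{D^3}{6}-\tfrac{D^2\cdot K_Z}{4}+\tfrac{D\cdot(K_Z^2+c_2(Z))}{12}+\chi(\mathcal{O}_Z)$, so for $D=n\rho^*K_X$ the $c_2$-term enters with coefficient $+\tfrac{n}{12}$, i.e.
$\chi=\tfrac{2n^3-3n^2}{12}K_X^3+\tfrac{n}{12}\rho^*K_X\cdot(K_Z^2+c_2(Z))+\chi(\mathcal{O}_Z)$; your coefficient $-\tfrac{2n-1}{12}$ (kept together with a separate $+\chi(\mathcal{O}_X)$) is wrong in both size and sign. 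With the correct positive coefficient, Theorem \ref{thm:my-ineq} — which reads $c_2(Z)\cdot\rho^*K_X\geq -A(p)K_X^3$, with no factor $2$ — produces a \emph{negative} correction, giving
$h^0(X,nK_X)\geq\tfrac{n(2n^2-3n+1-A(p))}{12}K_X^3$. In your write-up the same inequality is used to bound a term with a negative coefficient from below, which reverses the inequality: the second ``$\geq$'' in your displayed chain should be ``$\leq$'' under your own premises, and the resulting lower bound with $+\tfrac{(2n_0-1)A(p)}{6}$ is not justified.

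This matters because the correct computation has essentially no slack, contrary to your claim that ``even a crude lower bound suffices.'' Taking $A(2)=273$, $A(3)=151$, $A(5)=103$, $A(p)=90$ for $p\geq 7$ (valid round-ups of $\tfrac{63p^2+\frac{21}{2}p}{(p-1)^2}$), one gets $\tfrac{13(300-273)}{12}=29.25$, $\tfrac{10(171-151)}{12}\approx 16.7$, $\tfrac{9(136-103)}{12}=24.75$, and $\tfrac{8(105-90)}{12}=10$ \emph{exactly} for $p\geq 7$: the $c_2$-correction $-\tfrac{n}{12}A(p)K_X^3$ cancels most of the leading term (e.g.\ $70-60=10$ when $n=8$), which is precisely why the thresholds $n_0(p)$ are chosen as they are. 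Your case-by-case check of $(n_0(p)-1)(p-1)\geq 6$ for Lemma \ref{lem:van-h2} and the argument that $\chi(\mathcal{O}_X)\geq 0$ are correct and in fact slightly more careful than the paper's; but the Riemann--Roch bookkeeping must be redone with the right sign before the lemma is actually proved.
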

\begin{proof}
Since $h^3(\O_X) = h^0(X, \omega_X)  \leq 1$, $\chi(\O_X) \geq 0$. Let $\rho: Z \to X$ be a smooth resolution. By Theorem \ref{thm:my-ineq}, we can set $A(2) = 273$, $A(3) = 151$, $A(5) = 103$ and $A(p) = 90$ for $p\geq 7$ to make sure that $c_2(Z) \cdot \sigma^*K_X + A(p)K_X^3 \geq 0$.
Since $R\rho_*\mathcal{O}_X \cong \rho_*\mathcal{O}_X$, applying Riemann-Roch formula we have
\begin{align*}
&h^0(X, nK_X) + h^2(X, nK_X) \\
&\geq \chi(Z, \rho^*\mathcal{O}_X(nK_X)) \\
&= \frac{2n^3 - 3n^2}{12}(\rho^*K_X)^3+ \frac{n}{12}(\rho^*K_X)\cdot(K_Z^2 + c_2(Z)) + \chi(\mathcal{O}_Z)\\
& = \frac{2n^3 - 3n^2}{12}K_X^3 + \frac{n}{12}K_X\cdot(K_X^2 + \rho_*c_2(Z)) + \chi(\mathcal{O}_X) \\
& \geq \frac{2n^3 - 3n^2}{12}K_X^3 + \frac{n}{12}K_X\cdot(K_X^2 + \rho_*c_2(Z)) \geq \frac{n(2n^2 - 3n +1-A(p))}{12}K_X^3.
\end{align*}
Note that for $n \geq 6$, $h^2(X, nK_X) =0$ by Lemma \ref{lem:van-h2}. Then we can verify the lemma by direct computations.
\end{proof}

If $h^3(\mathcal{O}_X) \leq 1$ we let $n_0 = n_0(p)$ as in Lemma \ref{lem:h0}; otherwise we let $n_0=1$. Let $\phi_{n_0}: X \dashrightarrow \mathbb{P}^N$ denote the $n_0$-canonical map. We can do some blowup $\rho: Z \to X$ to assume that in the decompostion $\rho^*K_X = |H_Z| + E$, the movable part $|H_Z|$ has no base point and hence defines a morphism $\psi: Z \to \mathbb{P}^N$. Let $f: Z \to Y$ be the fibration arising from the Stein factorization of $\psi$. Then $H_Z = f^*H$ for an ample and free divisor $H$ on $Y$.

Case (1) $\dim\phi_{n_0} = 3$. Applying Theorem \ref{thm:bir-criterion-for-induction}, we can prove that for any $l >0$, $S_{-}^0(Z, K_Z + f^*3H + l\rho^*K_X) \neq 0$ and $S_{-}^0(Z, K_Z + f^*4H + l\rho^*K_X)$ is birational. Since $n_0\rho^*K_X \geq f^*H$, applying Proposition \ref{prop:F-stable-section} (i) and (iv) we only need to set $M_1 = 3n_0 +1$ and $M_2 = 4n_0 + 1$.

Case (2) $\dim\phi_{n_0} = 2$. Applying Theorem \ref{thm:eff-curve} and Theorem \ref{thm:bir-geo-gen}, since $K_X$ is Cartier, we have that $S_{-}^0(Z_{\eta}, K_{Z_{\eta}} + 2\rho^*K_X) \neq 0$, and $S_{-}^0(Z_{\eta}, K_{Z_{\eta}} + l\rho^*K_X)$ is birational for $l \geq 3$.
It follows by Theorem \ref{thm:bir-criterion-for-induction} that for any $l >0$, $S_{-}^0(Z, K_Z + f^*2H + l\rho^*K_X) \neq 0$ and $S_{-}^0(Z, K_Z + f^*3H + l\rho^*K_X)$ is birational. Then arguing similarly as in Case (1) we may set $M_1 = 2n_0 + 2$ and $M_2 = 3n_0 + 3$.

Case (3) $\dim\phi_{n_0} = 1$. In this case, $Y = \mathbb{P}^1$ since $q(Z) =0$. Since it suffices to show the result after a base field extension we may assume $k$ is uncountable. Let $F$ denote a general fiber of $f$ and let $\bar{F} = \rho_*(F)$.  Note that the Weil divisor $\bar{F}$ is Cartier in codimension two, and the divisor $K_{\bar{F}} \sim (K_X + \bar{F})|_{\bar{F}}$ is $\mathbb{Q}$-Cartier and Cartier in codimension one on $\bar{F}$. Let $G$ be a smooth resolution of $F$. By the adjunction formula we can write that
$$K_G \sim  K_{\bar{F}}|_G + E_2' - E_1'$$
where $E_1',E_2'$ are effective divisors without common components. Then $E_2'$ must be exceptional over $\bar{F}$, and we can prove that each its irreducible component is a rational curve by running a minimal model program over $\bar{F}$.


Case (3.1) $h^0(X, \omega_X) \leq 1$. We may assume $H_Z \sim r_0F$ where $r_0 = h^0(X, n_0K_X) -1$. Then by Lemma \ref{lem:h0}, we have that $r_0 > n_0$ and $n_0K_X \geq r_0\bar{F}$. We can write that
$$K_X= \frac{n_0}{r_0}K_X + (1-\frac{n_0}{r_0})K_X \sim_{\mathbb{Q}} \bar{F} + \bar{E} + (1-\frac{n_0}{r_0})K_X$$
where $\bar{E} \geq 0$, and we may assume $\bar{E}$ and $\bar{F}$ have no common component since $|\bar{F}|$ is movable. In turn we have
$$2K_X|_G \sim_{\mathbb{Q}} (K_X + \bar{F} + \bar{E} + (1-\frac{n_0}{r_0})K_X)|_G\sim_{\mathbb{Q}} K_G -  E_2' + E_1'+ (\bar{E} + (1-\frac{n_0}{r_0})K_X)|_G.$$
It follows that $(2K_X + E_2')|_G - K_G$ is big. To apply Corollary \ref{cor:eff-surface}, we set $D =K_X|_G$ and $r=2$, then obtain that $S^0_{-}(G, K_G + sK_X|_G) \neq 0$ for any integer $s\geq 5$, and $S^0_{-}(G, K_G + sK_X|_G)$ is birational for any integer $s\geq 9$. Applying Proposition \ref{prop:F-stable-section} (iv) and Corollary \ref{cor:bir-general-fiber}, analogous results hold for  $S^0_{-}(Z_{\eta}, K_{Z_{\eta}} + sK_X|_{Z_{\eta}})$. By Theorem \ref{thm:bir-criterion-for-induction} (i) we may set
$M_1 = n_0 + 5 $, and by the second part of (ii) we may set $M_2 = n_0 + 9$.

Case (3.2) $h^0(X, \omega_X) \geq 2$. Then $K_X \geq \bar{F}$. By similar argument of Case (3.1) we first obtain that $(3K_X + E_2')|_G - K_G$ is big, then apply Corollary \ref{cor:eff-surface} to prove $S^0_{-}(G, K_G + sK_X|_G) \neq 0$ for any integer $s\geq 6$, and $S^0_{-}(G, K_G + sK_X|_G)$ is birational for any integer $s\geq 11$.  Finally applying Theorem \ref{thm:bir-criterion-for-induction} we may set $M_1 = 7$ and $M_2 = 13$.
\smallskip

Remark that in Case (1), by taking a sub-linear system of $|n_0K_X|$ we can reduce us to the situation of Case (2), in practice the bounds of Case (2) are smaller. But we cannot reduce the former two cases to Case (3) because we do not have the relation $r_0 = h^0(X, n_0K_X) -1$ in Case (3.1).
\smallskip


\subsection{Irregular case} Let $a: X \to A$ be the Albanese map, let $\rho: Z \to X$ be a smooth resolution of singularities and let $f: Z \to Y$ be the fibration induced by the Stein factorization of $a\circ \rho: Z \to A$. Denote by $F$ the generic fiber of $f$. According to the relative dimension of $f$ we fall into three cases as follows.
\medskip

Case (1) $\dim F = 0$. We may set $M_1 = 3$ and $M_2=5$ by applying Theorem \ref{thm:bir-criterion-irr} (i,ii) and (iv) respectively.

Case (2) $\dim F = 1$. Since $K_X|_F = K_F$ has degree $\geq 2$, by Theorem \ref{thm:eff-curve} it follows that $S^0_{-}(F, K_F+ K_X|_F) \neq 0$ and $S^0_{-}(F, K_F+ 2K_X|_F)$ is birational. Then applying Theorem \ref{thm:bir-criterion-irr} we may set $M_1 = 1 +(1+1) = 3$ and $M_2=2 + 2 +2=6$.

Case (3) $\dim F = 2$. Since $X$ is smooth in codimension two, the generic fiber $F$ is regular and $K_X|_F \sim K_F$. By Theorem \ref{thm:eff-surface-nonclosed-field}, it follows that $S^0_{-}(F, K_F+ 5K_X|_F) \neq 0$ and $S^0_{-}(F, K_F+ 9K_X|_F)$ is birational; and if $p>2$ then $S^0_{-}(F, K_F+ 4K_X|_F) \neq 0$ and $S^0_{-}(F, K_F+ 7K_X|_F)$ is birational. Applying Theorem \ref{thm:bir-criterion-irr}, we may set $M_1=5+(1+5) = 11$ and $M_2=9+6+6 =21$; and if $p>2$, $M_1=4+(1+4)=9$ and $M_2=7 +5+5=17$.

\bibliographystyle{plain}

\end{document}